\documentclass[authoryear]{elsarticle}
\RequirePackage[OT1]{fontenc}
\usepackage[utf8]{inputenc}
\RequirePackage{amsthm,amsmath,natbib}
\usepackage{hyperref}
\usepackage[ngerman,english]{babel}
\usepackage{a4wide}
\usepackage{calc}
\usepackage{times}
\usepackage{listings}
\usepackage{changes}
\usepackage{fancyhdr}
\usepackage{amssymb,amsmath,amstext,amsthm, ams fonts, amsfonts,mathdots}
\usepackage{color,graphics,graphicx}
\usepackage{framed}
\usepackage{dsfont}
\usepackage{bbm}
\usepackage{nicefrac}
\newcommand{\KLEINO}{{\scriptstyle{\mathcal{O}}}}
\renewcommand{\tilde}{\widetilde}

\newcommand{\iu}{\mathrm i}

\renewcommand{\P}{\mathbb{P}}
\renewcommand{\Re}{\operatorname{Re}}
\newcommand{\F}{\mathcal{F}}
\newcommand{\N}{\mathds{N}}
\newcommand{\R}{\mathds{R}}
\DeclareMathAccent{\verywidehat}{\mathord}{largesymbols}{'144}
\newcommand{\var}{\mathbb{V}\hspace*{-0.05cm}\textnormal{a\hspace*{0.02cm}r}}
\newcommand{\Cov}{\mathbb{C}\textnormal{O\hspace*{0.02cm}V}}

\newcommand{\cov}{\mathbb{C}\textnormal{o\hspace*{0.02cm}v}}

\newtheorem{prop}{Proposition}[section]
\newtheorem{ass}[prop]{Assumption}

\newtheorem{cor}[prop]{Corollary}

\newtheorem{lem}[prop]{Lemma}
\newtheorem{thm}[prop]{Theorem}
\newtheorem{rem}[prop]{Remark}

\allowdisplaybreaks[4]

\global\long\def\phi{\varphi}
\global\long\def\epsilon{\varepsilon}
\global\long\def\theta{\vartheta}
\global\long\def\E{\mathbb{E}}

\global\long\def\N{\mathbb{N}}

\global\long\def\R{\mathbb{R}}
\global\long\def\F{\mathcal{F}}
\global\long\def\le{\leqslant}
\global\long\def\ge{\geqslant}

\global\long\def\1{\mathbbm1}
\global\long\def\d{\mathrm{d}}
 \global\long\def\subset{\subseteq}

\global\long\def\argmin{\arg\,\min}

\begin{document}
\begin{frontmatter}
\title{Volatility estimation for stochastic PDEs using high-frequency observations}
\author[1]{Markus Bibinger} 
\author[2]{Mathias Trabs\footnote{Corresponding author.}}

\address[1]{Fachbereich 12 Mathematik und Informatik, Philipps-Universität Marburg, bibinger@uni-marburg.de}
\address[2]{Fachbereich Mathematik, Universität Hamburg, mathias.trabs@uni-hamburg.de}
\normalsize

\begin{abstract}
We study the parameter estimation for parabolic, linear, second-order, stochastic partial differential equations (SPDEs) observing a mild solution on a discrete grid in time and space. A high-frequency regime is considered where the mesh of the grid in the time variable goes to zero. Focusing on volatility estimation, we provide an explicit and easy to implement method of moments estimator based on squared increments. The estimator is consistent and admits a central limit theorem. This is established moreover for the joint estimation of the integrated volatility and parameters in the differential operator in a semi-parametric framework. Starting from a representation of the solution of the SPDE with Dirichlet boundary conditions as an infinite factor model and exploiting mixing-type properties of time series, the theory considerably differs from the statistics for semi-martingales literature. The performance of the method is illustrated in a simulation study.

\begin{keyword}
high-frequency data\sep
stochastic partial differential equation\sep
random field\sep
realized volatility\sep
mixing-type limit theorem\\[.25cm]
\MSC[2010] {62M10} \sep {60H15}
\end{keyword}
\end{abstract}
\end{frontmatter}
\thispagestyle{plain}

\section{Introduction}
\enlargethispage*{.8cm}
\subsection{Overview}

Motivated by random phenomena in natural science as well as by mathematical finance, stochastic partial differential equations (SPDEs) have been intensively studied during the last fifty years with a main focus on theoretical analytic and probabilistic aspects. Thanks to the exploding number of available data and the fast progress in information technology, SPDE models become nowadays increasingly popular for practitioners, for instance, to model neuronal systems or interest rate fluctuations. Consequently, statistical methods are required to calibrate this class of complex models. While in probability theory there are recently enormous efforts to advance research for SPDEs, for instance \citet{hairer2013} was able to solve the KPZ equation, there is scant groundwork on statistical inference for SPDEs and there remain many open questions. 

Considering {\emph{discrete high-frequency data}} in time, our aim is to extend the well understood statistical theory for semi-martingales, see, for instance, \cite{JP} and \cite{inference}, to parabolic SPDEs which can be understood as infinite dimensional stochastic differential equations. Generated by an infinite factor model, high-frequency dynamics of the SPDE model differ from the semi-martingale case in several ways. The SPDE model induces a distinctive, much rougher behavior of the marginal processes over time for a fixed spatial point compared to semi-martingales or diffusion processes. In particular, they have infinite quadratic variation and a nontrivial quartic variation, cf.\ \cite{swanson2007}. Also, we find non-negligible negative autocovariances of increments such that the semi-martingale theory and martingale central limit theorems are not applicable to the marginal processes. Nevertheless, we show that the fundamental concept of \emph{realized volatility} as a key statistic can be adapted to the SPDE setting. While this work provides a foundation and establishes first results, we are convinced that more concepts from the high-frequency semi-martingale literature can be fruitfully transferred to SPDE models.

We consider the following linear parabolic SPDE with one space dimension
\begin{gather}
\d X_t(y)=\Big(\theta_2\frac{\partial^{2}X_t(y)}{\partial y^{2}}+\theta_1\frac{\partial X_t(y)}{\partial y}+\theta_0 X_t(y)\Big)\,\d t+\sigma_t\,\d B_{t}(y),\quad X_0(y)=\xi(y)\label{eq:spde}\\
(t,y)\in\R_+\times[y_{min},y_{max}],\quad X_{t}(y_{min})=X_{t}(y_{max})=0\text{ for }t\ge0,\nonumber 
\end{gather}
where $B_{t}$ is defined as a cylindrical Brownian motion in a Sobolev space on $[y_{min},y_{max}]$, the initial value $\xi$ is independent from $B$, and with parameters $\theta_0,\theta_1\in\R$ and $\theta_2>0$ and some volatility function $\sigma$ which we assume to depend only on time. The simple Dirichlet boundary conditions $X_{t}(y_{min})=X_{t}(y_{max})=0$ are natural in many applications. We also briefly touch on enhancements to other boundary conditions.

A solution $X=\{X_t(y),(t,y)\in[0,T]\times[y_{min},y_{max}]\}$ of \eqref{eq:spde} will be observed on a discrete grid $(t_i,y_j)\subset[0,T]\times[y_{min},y_{max}]$ for $i=1,\dots,n$ and $j=1,\dots,m$ in a fixed rectangle. More specifically we consider equidistant time points $t_i=i\Delta_n$. While the parameter vector $\theta=(\theta_0,\theta_1,\theta_2)^{\top}$ might be known from the physical foundation of the model, estimating $\sigma^2$ quantifies the level of variability or randomness in the system. This constitutes our first target of inference. An extension of our approach for inference on $\theta$ will also be discussed.

\subsection{Literature}

The key insight in the pioneering work by \cite{huebnerEtAl1993} is that for a large class of parabolic differential equations the solution of the SPDE can be written as a Fourier series where each Fourier coefficient follows an ordinary stochastic differential equation of Ornstein-Uhlenbeck type. Hence, statistical procedures for these latter processes offer a possible starting point for inference on the SPDE. Most of the available literature on statistics for SPDEs studies a scenario with observations of the Fourier coefficients in the spectral representation of the equation, see for instance \citet{huebnerroz95} and \citet{CialencoGlattHoltz2011} for the maximum likelihood estimation for linear and non-linear SPDEs, respectively. \citet{bishwal2002} discusses a Bayesian approach and \citet{cialenco2016trajectory} a trajectory fitting procedure. \cite{lototsky2000} and \citet{prakasarao2002} have studied nonparametric estimators when the Fourier coefficients are observed continuously or in discrete time, respectively. We refer to the surveys by \citet{lototsky2009} and \cite{Cialenco2018} for an overview on the existing theory.

The (for many applications) more realistic scenario where the solution of a SPDE is observed only at discrete points in time and space has been considered so far only in very few works. \citet{Markussen2003} has derived asymptotic normality and efficiency for the maximum likelihood estimator in a parametric problem for a parabolic SPDE. \cite{mohapl1997} has considered maximum likelihood and least squares estimators for discrete observations of an elliptic SPDE where the dependence structure of the observations is however completely different (in fact simpler) from the parabolic case. 

Our theoretical setup differs from the one in \citet{Markussen2003} in several ways. First, we introduce a time varying volatility $\sigma_t$ in the disturbance term. Second, we believe that discrete high-frequency data in time under infill asymptotics, where we have a finite time horizon $[0,T]$ and in the asymptotic theory the mesh of the grid tends to zero, are most informative for many prospective data applications. \citet{Markussen2003} instead focused on a low-frequency setup where the number of observations in time tends to infinity at a fixed time step. Also his number of observations in space is fixed, but we allow for an increasing number of spatial observations, too.

There are two very recent related works by \cite{cialenco} and \cite{chong}. In these independent projects the authors establish results for power variations in similar models, when $\theta_1=0$. \cite{cialenco} prove central limit theorems in a parametric model when either $m=1$ and $n\rightarrow\infty$, or $n=1$ and $m\rightarrow\infty$. For an unbounded spatial domain, \cite{chong} establishes a limit theorem for integrated volatility estimation when $m$ is fixed and $n\rightarrow\infty$. While some of our results are related, we provide the first limit theorem under double asymptotics when $n\rightarrow\infty$ and $m\rightarrow\infty$, and for the joint estimation of the volatility and an unknown parameter in the differential operator. Interestingly, the proofs of the three works are based on very different technical ingredients -- while we use mixing theory for time series the results by \cite{cialenco} rely on tools from Malliavin calculus and \cite{chong} conducts a martingale approximation by truncation and blocking techniques to apply results by \cite{jacodkey}.

\subsection{Methodology}

The literature on statistics for high-frequency observations of semi-martingales suggests to use the sum of squared increments in order to estimate the volatility. While in the present infinite dimensional model this remains generally possible, there are some deep structural differences. Supposing that $\sigma_t$ is at least $1/2$-Hölder regular, we show for any fixed spatial point $y\in(y_{min},y_{max})$ the convergence in probability
\[
  RV_n(y):=\frac1{n\sqrt{\Delta_n}}\sum_{i=1}^n (\Delta_i X)^2(y)\overset{\P}{\to}\frac{e^{-y\,\vartheta_1/\vartheta_2}}{\sqrt{\vartheta_2\pi}}\,\int_0^T\sigma^2_t\d t\quad\text{for}\quad n\to\infty,
\]
where $\Delta_iX(y):=X_{i\Delta_n}(y)-X_{(i-1)\Delta_n}(y),\Delta_n=T/n$. For a semi-martingale instead, squared increments are of order $\Delta_n$ and not $\sqrt{\Delta_n}$. We also see a dependence of the realized volatility on the spatial position $y$. Assuming $\theta_1,\theta_2$ to be known and exploiting this convergence, the construction of a consistent method of moments estimator for the integrated volatility is obvious. However, the proof of a central limit theorem for such an estimator is non-standard. 

The method of moments can be extended from one to $m$ spatial observations where $m$ is a fixed integer as $n\to\infty$, or in the double asymptotic regime where $n\to\infty$ and $m\to\infty$. It turns out that the realized volatilities $(RV_n(y_j))_{1\le j\le m}$ even de-correlate asymptotically when $m^2\Delta_n\to 0$. We prove that the final estimator attains a parametric $\sqrt{m n}$-rate and satisfies a central limit theorem. Since we have negative autocorrelations between increments $(\Delta_iX(y_j))_{1\le i\le n}$, a martingale central limit theorem cannot be applied. Instead, we apply a central limit theorem for weakly dependent triangular arrays by \citet{peligradUtev1997}. Introducing a quarticity estimator, we also provide a feasible version of the limit theorem that allows for the construction of confidence intervals. 

In view of the complex probabilistic model, our estimator is strikingly simple. Thanks to its explicit form that does not rely on optimization algorithms, the method can be easily implemented and the computation is very fast. An M-estimation approach can be used to construct a joint estimator of the parameters in \eqref{eq:spde}. While the volatility estimator is first constructed in the simplified parametric framework, by a careful decomposition of approximation errors we derive asymptotic results for the semi-parametric estimation of the integrated volatility under quite general assumptions.  

\subsection{Applications}
\enlargethispage*{.25cm}
\begin{figure}[t]
\centering
\caption{A simulated random field solving the SPDE.}
\includegraphics[width=12.0cm]{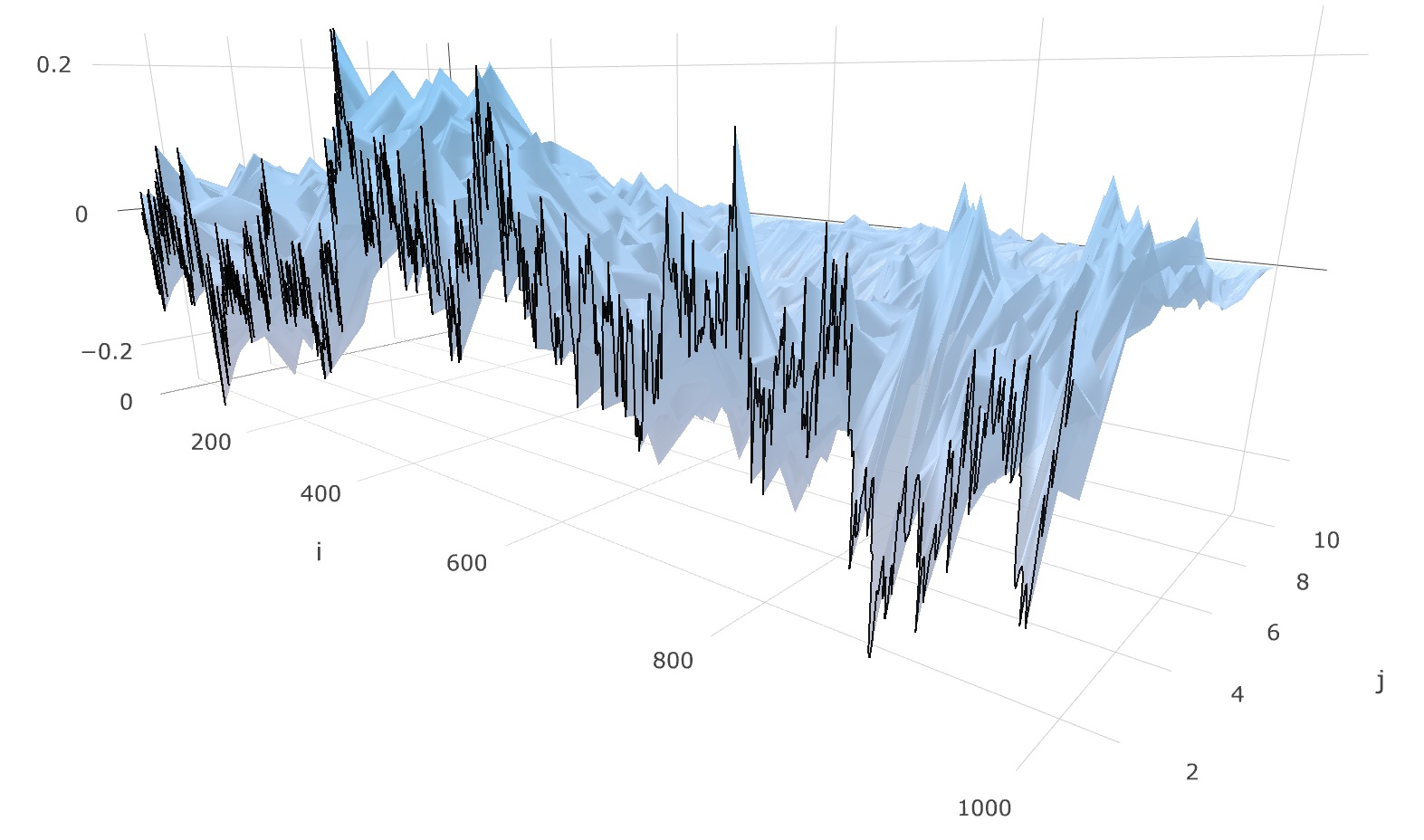}
\begin{quote}{\footnotesize \textit{Notes: One simulated solution field of the SPDE at times $i/n,i=0,\ldots,1000$ at space points $j/m,j=1,\ldots,10$. Implementation and parameter details are given in Section \ref{sec:5}. One marginal process for $j=1$ is highlighted by the black line.}} \end{quote}
\label{Fig:Intro}
\end{figure}

Although \eqref{eq:spde} is a relatively simple SPDE model, it already covers many interesting applications. Let us give some examples:
\begin{enumerate}
 \item The \textit{stochastic heat equation} 
 \[
    \d X_t(y)=\theta_2\frac{\partial^{2}X_t(y)}{\partial y^{2}}\,\d t+\theta_0 X_t(y)\,\d t+\sigma_t\,\d B_{t}(y)\,,
 \]
 with thermal diffusivity $\theta_2>0$, is the prototypical example for parabolic SPDEs. With a dissipation rate $\theta_0\ne 0$, the SPDE describes the temperature in space and time of a system under cooling or heating.
 \item The \textit{cable equation} is a basic PDE-model in neurobiology, cf.\ \citet{tuckwell2013}. Modeling the nerve membrane potential, its stochastic version is given by
 \[
  c_m \,{\d V_t(y)}=\Big(\frac{1}{r_i}\frac{\partial^{2}V_t(y)}{\partial y^{2}}-\frac{V_t(y)}{r_m}\Big)\d t+\sigma_t\,\d B_{t}(y),\quad 0<y<l,t>0,
 \]
 where $y$ is the length from the left endpoint of the cable with total length $l>0$ and $t$ is time. The parameters are the membrane capacity $c_m>0$, the resistance of the internal medium $r_i>0$ and the membrane resistance $r_m>0$. The noise terms represent the synaptic input. The first mathematical treatment of this equation goes back to \cite{walsh1986}.
 \item The \textit{term structure model} by \cite{cont2004}. It describes a yield curve as a random field, i.e.\;a continuous function of time $t$ and time to maturity $y\in[y_{min},y_{max}]$. Differently than classical literature on time-continuous interest rate models in mathematical finance, including the landmark paper by \cite{HJM} and ensuing work, which focus on modeling arbitrage-free term structure evolutions under risk-neutral measures, \cite{cont2004} aims to describe real-world dynamics of yield curves. In particular, the model meets the main stylized facts of interest rate movements as mean reversion and humped term structure of volatility. Thereto, the instantaneous forward rate curve (FRC) $r_t(y)$ is decomposed
\begin{align}\label{frc}r_t(y)=r_t(y_{min})+(r_t(y_{max})-r_t(y_{min}))\big(Y(y)+X_t(y)\big)\,.\end{align} 
When the short rate and the spread are factored out, the FRC is determined by a deterministic shape function $Y(y)$ reflecting the average profile and the stochastic deformation process $X_t(y)$ modeled by the SPDE 
\begin{gather}
\d X_t(y)=\Big(\frac{\partial X_t(y)}{\partial y}+\frac{\kappa}{2}\frac{\partial^{2}X_t(y)}{\partial y^{2}}\Big)\,\d t+\sigma_t\,\d B_{t}(y),\label{eq:spde2}\\
(t,y)\in\R_+\times[y_{min},y_{max}],\quad X_{t}(y_{min})=X_{t}(y_{max})=0\;\text{for}~ t\ge0,\nonumber 
\end{gather}
with a parameter $\kappa>0$. Figure \ref{Fig:Intro} visualizes one generated discretization of $X_t(y)$ with $\sigma\equiv 1/4$ and $\kappa=0.4$ from our Monte Carlo simulations. Observations of the FRC can either be reconstructed from observed swap rates or prices of future contracts are used. For instance, \cite{bouchaud} show how to reconstruct discrete recordings of $X_t(y)$ from observed prices of futures.\\
There are more SPDE approaches to interest rates including \cite{musiela:1993}, \cite{santaclaraSornette2000} and \cite{Bagchi2001}. 
\end{enumerate}

\subsection{Outline \& Contribution}

In Section \ref{sec:2}, we present probabilistic preliminaries on the model and our precise theoretical setup and observation model. In Section \ref{sec:3}, we start considering a parametric problem with $\sigma$ in \eqref{eq:spde} constant. The statistical properties of the discretely observed random field generated by the SPDE are examined and illustrated. We develop the method of moments estimator for the volatility and present the central limit theorem. In Section \ref{sec:estTheta} we verify that the same estimator is suitable to estimate the integrated volatility in a nonparametric framework with time-dependent volatility. 
Section \ref{sec:estTheta} addresses further a simultaneous estimation of $\theta$ and $\sigma^2$ based on a least squares approach. Joint asymptotic normality of this estimator is established with parametric rate. Section~\ref{sec:5} discusses the simulation of the SPDE model and the implementation of the methods and confirms good finite-sample properties in a Monte Carlo study. Proofs are given in Section~\ref{sec:6}.

\section{Probabilistic structure and statistical model\label{sec:2}}
\enlargethispage*{.5cm}
Without loss of generality, we set $y_{min}=0$ and $y_{max}=1$.
We apply the semi-group approach by \cite{daPratoZabczyk1992} to analyze the SPDE \eqref{eq:spde} which is associated to the differential operator
\[A_\theta :=\theta_0+\theta_1\frac{\partial}{\partial y}+\theta_2\frac{\partial^{2}}{\partial y^{2}}\]
such that the SPDE \eqref{eq:spde} reads as $\d X_t(y)=A_{\theta} X_t(y)\,\d t+\sigma_t\,\d B_{t}(y)$. The eigenfunctions $e_k$ of $A_\theta$ with corresponding eigenvalues $-\lambda_k$ are given by
\begin{align}\label{eq:eigenf}
  e_k(y)&=\sqrt{2}\sin\big(\pi ky\big)\exp\big(-\frac{\theta_1}{2\theta_2}y\big),\quad y\in [0,1],\\
  \lambda_k&=-\theta_0+\frac{\theta_1^2}{4\theta_2}+\pi^2k^2\theta_2,\qquad k\in\N.\label{eq:eK}
\end{align}
This eigendecomposition takes a key role in our analysis and for the probabilistic properties of the model. Of particular importance is that $\lambda_k$ increase proportionally with $k^2$, $\lambda_k\propto k^2$, and that $e_k(y)$ depend exponentially on $y$ with an additional oscillating factor.
Note that $(e_k)_{k\ge1}$ is an orthonormal basis of the Hilbert space $H_\theta:=\{f\colon[0,1]\to\R: \|f\|_\theta<\infty\}$ with
\begin{align*}
  \langle f,g\rangle_{\theta}&:=\int_{0}^{1}e^{y\theta_1/\theta_2}f(y)g(y)\,\d y\qquad\text{and}\qquad \|f\|_\theta^2:=\langle f,f\rangle_\theta.
\end{align*}
We choose $H_\theta$ as state space for the solutions of \eqref{eq:spde}. In particular, we suppose that $\xi\in H_{\theta}$. $A_\theta$ is self-adjoint on $H_\theta$ and the domain of $A_\theta$ is $\{f\in H_\theta: \|f'\|_\theta,\|f''\|_\theta<\infty, f(0)=f(1)=0\}$. The cylindrical Brownian motion $(B_t)_{t\ge 0}$ in \eqref{eq:spde} can be defined via
\[
  \langle B_t,f\rangle_\theta=\sum_{k\ge1}\langle f,e_k\rangle_\theta W_t^k,\quad f\in H_\theta, t\ge0,
\]
for independent real-valued Brownian motions $(W_t^k)_{t\ge0}$, $k\ge 1$. 
$X_{t}(y)$ is called a \emph{mild solution} of (\ref{eq:spde}) on $[0,T]$ if it satisfies the integral equation for any $t\in[0,T]$
\[X_t=e^{tA_\theta}\xi+\int_0^te^{(t-s)A_\theta}\sigma_s\,\d B_s\quad a.s.\]
Defining the coordinate processes $x_{k}(t):=\langle X_{t},e_{k}\rangle_\theta,t\ge0,$ for any $k\ge1$, the random field $X_{t}(y)$ can thus be represented as the infinite factor model
\begin{equation}
X_{t}(y)=\sum_{k\ge1}x_{k}(t)e_{k}(y)\quad\text{with}\quad 
x_{k}(t)=e^{-\lambda_{k}t}\langle \xi,e_{k}\rangle_{\theta}+\int_{0}^{t}e^{-\lambda_{k}(t-s)}\sigma_s\,\d W_{s}^{k}.\label{eq:Representation}
\end{equation}
In other words, the coordinates $x_k$ satisfy the Ornstein-Uhlenbeck dynamics
\begin{equation}
\d x_{k}(t)=-\lambda_{k}x_{k}(t)\d t+\sigma_t\,\d W_{t}^{k},\quad x_{k}(0)=\langle \xi,e_{k}\rangle_{\theta}\,.\label{eq:ornsteinUhlenbeck}
\end{equation}
There is a modification of the stochastic convolution $\int_0^\cdot e^{(\cdot-s)A_\theta}\sigma_s\,\d B_s$ which is continuous in time and space and thus we may assume that $(t,y)\mapsto X_t(y)$ is continuous, cf.\ \citet[Thm. 5.22]{daPratoZabczyk1992}. The neat representation \eqref{eq:Representation} separates dependence on time and space and connects the SPDE model to stochastic processes in Hilbert spaces. This structure has been exploited in several works, especially for financial applications, see, for instance, \cite{Bagchi2001} and \cite{schmidt}. We consider the following observation scheme.
\begin{ass}[Observations]\label{Obs}
  Suppose we observe a mild solution $X$ of (\ref{eq:spde}) on a discrete grid $(t_{i},y_{j})\in[0,1]^2$ with 
  \begin{equation}\label{eq:observations}
  t_{i}=i\Delta_{n}\quad\text{for}\quad i=0,\dots,n\quad\text{and}\quad \delta\le y_{1}<y_2<\dots <y_m\le 1-\delta
  \end{equation}
  where $n,m\in\N$ and $\delta>0$. We consider an infill asymptotics regime where $n\Delta_n=1$, $\Delta_n\rightarrow 0$ as $n\rightarrow\infty$ and $m<\infty$ is either fixed or it may diverge, $m\rightarrow\infty$, such that $m=m_n=\mathcal{O}(n^{\rho})$ for some $\rho\in(0,1/2)$. We assume that $m\cdot\min_{j=2,\dots,m}|y_j-y_{j-1}|$ is bounded from below, uniformly in $n$. 
\end{ass}
Here and throughout, we write $A_n=\mathcal O(B_n)$, if there exist a constant $C>0$ independent of $m,n$, and of potential indices $1\le i\le n$ or $1\le j\le m$, and some $n_0\in\N$, such that $|A_n|\le C B_n $ for all $n\ge n_0$. While we have fixed the time horizon to $T=1$, the subsequent results can be easily extended to any finite $T$.
We write $\Delta_i X(y)=X_{i\Delta_n}(y)-X_{(i-1)\Delta_n}(y),1\le i\le n$, for the increments of the marginal processes in time and analogously for increments of other processes.

The condition $m=\mathcal{O}(n^{\rho})$ for some $\rho\in(0,1/2)$ implies that we provide asymptotic results for discretizations which are finer in time than in space. For instance in the case of an application to term structure data, this appears quite natural, since we can expect many observations over time of a moderate number of different maturities. 
Especially for financial high-frequency intra-day data, we typically have at hand several thousands intra-day price recordings per day, but usually at most 10 frequently traded different maturities. For instance, Eurex offers four classes of German government bonds with different contract lengths.\footnote{see \url{www.eurexchange.com/exchange-en/products/int}} For each, futures with 2-3 different maturities are traded on a high-frequency basis. While we can thus extract observations in at most 12 different spatial points from this data, 5000-10000 intra-daily prices are available, see \cite[Sec.\ 4]{ecb}. The relation between $m=15$ and $n$ is similar in \cite{bouchaud}, where, however, daily prices are used.

We impose the following mild regularity conditions on the initial value of the SPDE \eqref{eq:spde}.

\begin{ass}\label{cond} In \eqref{eq:spde} we assume that
  \begin{enumerate}
  \item[(i)] either $\E[\langle\xi,e_k\rangle_\theta]=0$ for all $k\ge1$ and $\sup_k \lambda_k\E[\langle\xi,e_k\rangle_\theta^2]<\infty$ holds true or \\$\E[\|A_\theta^{1/2}\xi\|_\theta^2]<\infty$;
  \item[(ii)] $(\langle\xi,e_k\rangle_\theta)_{k\ge1}$ are independent.
  \end{enumerate}
\end{ass}
This assumption is especially satisfied if $\xi$ is distributed according to the stationary distribution of \eqref{eq:spde}, where $\langle \xi,e_k\rangle_\theta$ are independently $\mathcal N(0,\sigma^2/(2\lambda_k))$-distributed. Note also that $\E[\|A_\theta^{1/2}\xi\|_\theta^2]<\infty$ implies $\sup_k \lambda_k\E[\langle\xi,e_k\rangle_\theta^2]=\sup_k \E[\langle A_\theta^{1/2}\xi,e_k\rangle_\theta^2]<\infty$. Assuming independence of the sequence $(\langle\xi,e_k\rangle_\theta)_{k\ge1}$ is a convenient condition for an analysis of the variance-covariance structure of our estimator, but could be replaced by other moment-type conditions on the coefficients of the initial value. 

\begin{rem}
  Due to the first-order term in \eqref{eq:spde}, when $\theta_1\ne 0$, our solution is in general not symmetric in space. One may expect, however, symmetry in the following sense: For $\theta_0 = 0$, starting in $\xi=0$, the law of $X_t(1/2+r), r \in[0,1/2]$, with $\theta_1>0$ should be the same as the law of $X_t(1/2-r)$ with the parameter $-\theta_1$. This property is satisfied, when we illustrate the solution \eqref{eq:Representation} with respect to the modified eigenfunctions
  \[
    \tilde e_k(y):=\sqrt{2}\sin(\pi ky)\exp\big(-\frac{\theta_1}{2\theta_2}\big(y-\frac{1}{2}\big)\big)=e_k(y)e^{\theta_1/(4\theta_2)}\,,
  \]
  corresponding to the same eigenvalues $-\lambda_k$, which form an orthonormal basis of $L^2([0,1])$ with respect to the scalar product $\langle f,g\rangle=\int_0^1e^{(y-1/2)\theta_1/\theta_2}f(y)g(y)\d y$. Then $\tilde e_k(1/2+r)$ for parameter $\theta_1$ equals $\pm \tilde e_k(1/2-r)$ for parameter $-\theta_1$, where the sign depends on $k$, and the symmetric centered Gaussian distribution of $x_k(t)$ ensures the discussed symmetry of the law of $X_t(y)$ around $y=1/2$. This adjustment by multiplying the constant $\exp(\theta_1/(4\theta_2))$, which depends on $\theta$ but not on $y$ or $t$, can be easily incorporated in all our results.
	
\end{rem}

\section{Estimation of the volatility parameter\label{sec:3}}
In this section, we consider the parametric problem where $\sigma$ is a constant volatility parameter. Suppose first that we want to estimate $\sigma^2$ based on $n\rightarrow\infty$ discrete recordings of $X_t(y)$, along only one spatial observation $y\in(\delta,1-\delta),\,\delta>0$, $m=1$. Let us assume first that $\theta$ is known. We thus know $(\lambda_{k},e_{k})_{k\ge1}$ explicitly from \eqref{eq:eK}. In many applications, $\theta$ will be given in the model or reliable estimates could be available from historical data. Nevertheless, we address the parameter estimation of $\theta$ in Section \ref{sec:estTheta}.

Volatility estimation relies typically on squared increments of the observed process. We develop a method of moment estimator here utilizing squared increments $(\Delta_i X)^2(y),1\le i\le n$, too. However, the behavior of the latter is quite different compared to the standard setup of high-frequency observations of semi-martingales. 

Though we do not observe the factor coordinate processes in \eqref{eq:Representation}, understanding the dynamics of their increments is an important ingredient of the analysis of $(\Delta_i X)^2(y),1\le i\le n$. The increments of the Ornstein-Uhlenbeck processes from \eqref{eq:Representation} can be decomposed as
  \begin{align}
    \Delta_i x_k&=\underbrace{\langle \xi, e_k\rangle_\theta\big(e^{-\lambda_ki\Delta_n}-e^{-\lambda_k(i-1)\Delta_n}\big)}_{=A_{i,k}}+\underbrace{\int_0^{(i-1)\Delta_n}\sigma e^{-\lambda_k((i-1)\Delta_n-s)}(e^{-\lambda_k \Delta_n}-1)\,\d W_s^k}_{=B_{i,k}}\notag\\
    &\hspace*{7cm}+\underbrace{\int_{(i-1)\Delta_n}^{i\Delta_n}\sigma e^{-\lambda_k(i\Delta_n-s)}\,\d W_s^k\,.}_{=C_{i,k}}\label{eq:ABC}
  \end{align}
For a fixed finite intensity parameter $\lambda_k$, observing $\Delta_i x_k$ corresponds to the classical semi-martingale framework. In this case, $A_{i,k}$, $B_{i,k}$ are asymptotically negligible and the exponential term in $C_{i,k}$ is close to one, such that $\sigma^2$ is estimated via the sum of squared increments. Convergence to the quadratic variation when $n\rightarrow\infty$ implies consistency of this estimator, well-known as the \emph{realized volatility}. 

It is important that our infinite factor model of $X$ includes an infinite sum involving $(\Delta_i x_k)_{k\ge 1}$ with increasing $\lambda_k\propto k^2$. In this situation the terms $B_{i,k}$ are not negligible and induce a quite distinct behavior of the increments. The larger $k$, the stronger the mean reversion effect and the smaller the variance of the Ornstein-Uhlenbeck coordinate processes, such that higher addends have decreasing influence in \eqref{eq:Representation}. In simulations, for instance in Figure \ref{Fig:1}, $X_t(y)$ is generated summing over $1\le k\le K$, up to some sufficiently large cut-off frequency $K$. An important consequence is that the squared increments $(\Delta_iX(y))^2$ are of order $\Delta_n^{1/2}$, while for semi-martingales the terms $(W^k_{(i+1)\Delta_n}-W^k_{i\Delta_n})^{2}$ induce the order $\Delta_n$. Hence, the aggregation of the coefficient processes leads to a rougher path $t\mapsto X_t(y)$.
\begin{prop}\label{propExp}
  On Assumptions \ref{Obs} and \ref{cond}, for constant $\sigma$, we have uniformly in $y\in[\delta,1-\delta]$ that
  \begin{align}\label{sqin}
    \E\big[(\Delta_i X)^2(y)\big]&=\Delta_n^{1/2}e^{-y\,\vartheta_1/\vartheta_2}\frac{\sigma^2}{\sqrt{\vartheta_2\pi}}+r_{n,i}+\mathcal O\big(\Delta_n^{3/2}\big)
  \end{align}
  for $i=1,\ldots,n$,  with terms $r_{n,i}$ that satisfy $\sup_{1\le i\le n} |r_{n,i}|=\mathcal O(\Delta_n^{1/2}),\sum_{i=1}^n r_{n,i}=\mathcal O(\Delta_n^{1/2})$, and become negligible when summing all squared increments:
  \begin{align*}
   \E\Big[\frac1{n\Delta_n^{1/2}}\sum_{i=1}^n(\Delta_i X)^2(y)\Big]&=e^{-y\,\vartheta_1/\vartheta_2}\frac{\sigma^2}{\sqrt{\vartheta_2\pi}} +\mathcal O\big(\Delta_n\big). \end{align*}
\end{prop}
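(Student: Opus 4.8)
The plan is to start from the infinite factor representation \eqref{eq:Representation}. Since $X_{i\Delta_n}(y)=\sum_{k\ge1}x_k(i\Delta_n)e_k(y)$ converges in $L^2(\Omega)$, one has $\E[(\Delta_iX)^2(y)]=\sum_{k,l\ge1}\E[\Delta_ix_k\,\Delta_ix_l]\,e_k(y)e_l(y)$. By Assumption~\ref{cond}(ii) together with the independence of $\xi$ and $B$, for $k\ne l$ the variables $\Delta_ix_k,\Delta_ix_l$ are independent, so the off-diagonal part equals $\big(\sum_{k}\E[\Delta_ix_k]e_k(y)\big)^2-\sum_k(\E[\Delta_ix_k])^2e_k(y)^2$; this vanishes in the centered case of Assumption~\ref{cond}(i), and in the other case it is bounded---uniformly in $i$ and $y$, and also after summation over $i$, by $\mathcal O(\Delta_n^{1/2})$---via a Cauchy--Schwarz step against $\E\|A_\theta^{1/2}\xi\|_\theta^2<\infty$, so it can be absorbed into $r_{n,i}$. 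It then suffices to analyse $\sum_{k\ge1}\E[(\Delta_ix_k)^2]e_k(y)^2$. Using the decomposition \eqref{eq:ABC}, the summands $A_{i,k},B_{i,k},C_{i,k}$ are mutually orthogonal in $L^2$ (disjoint Wiener integrals, $\xi\perp B$, $\E[B_{i,k}]=\E[C_{i,k}]=0$), so $\E[(\Delta_ix_k)^2]=\E[A_{i,k}^2]+\E[B_{i,k}^2]+\E[C_{i,k}^2]$; a routine Itô-isometry computation gives $\E[A_{i,k}^2]=\E[\langle\xi,e_k\rangle_\theta^2](1-e^{-\lambda_k\Delta_n})^2e^{-2\lambda_k(i-1)\Delta_n}$ and
\[
\E[B_{i,k}^2]+\E[C_{i,k}^2]=\frac{\sigma^2(1-e^{-\lambda_k\Delta_n})}{\lambda_k}-\frac{\sigma^2(1-e^{-\lambda_k\Delta_n})^2}{2\lambda_k}\,e^{-2\lambda_k(i-1)\Delta_n}.
\]
Hence I would write $\E[(\Delta_iX)^2(y)]=\sigma^2\sum_{k\ge1}\frac{1-e^{-\lambda_k\Delta_n}}{\lambda_k}e_k(y)^2+r_{n,i}$, with $r_{n,i}$ collecting the two $i$-dependent pieces (and the off-diagonal term where applicable).

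For the leading term the plan is to use $\frac{1-e^{-\lambda_k\Delta_n}}{\lambda_k}=\int_0^{\Delta_n}e^{-\lambda_kt}\,\d t$ and interchange sum and integral (Tonelli, nonnegative terms), giving $\sum_k\frac{1-e^{-\lambda_k\Delta_n}}{\lambda_k}e_k(y)^2=\int_0^{\Delta_n}\Theta_t(y)\,\d t$ with $\Theta_t(y):=\sum_{k\ge1}e^{-\lambda_kt}e_k(y)^2$, the on-diagonal kernel of the semigroup. By \eqref{eq:eigenf}--\eqref{eq:eK}, $e_k(y)^2=2e^{-y\theta_1/\theta_2}\sin^2(\pi ky)$ and $\lambda_k=\pi^2\theta_2k^2+\kappa_0$ with $\kappa_0:=\theta_1^2/(4\theta_2)-\theta_0$, so
\[
\Theta_t(y)=e^{-y\theta_1/\theta_2}e^{-\kappa_0t}\sum_{k\ge1}e^{-\pi^2\theta_2k^2t}\big(1-\cos(2\pi ky)\big).
\]
Poisson summation yields $\sum_{k\in\Z}e^{-\pi^2\theta_2k^2t}=(\pi\theta_2t)^{-1/2}\sum_{k\in\Z}e^{-k^2/(\theta_2t)}$ and $\sum_{k\in\Z}e^{-\pi^2\theta_2k^2t}\cos(2\pi ky)=(\pi\theta_2t)^{-1/2}\sum_{k\in\Z}e^{-(k+y)^2/(\theta_2t)}$; since $\operatorname{dist}(y,\Z)\ge\delta$ for $y\in[\delta,1-\delta]$ by Assumption~\ref{Obs}, the second sum is $\mathcal O(t^{-1/2}e^{-\delta^2/(\theta_2t)})$ uniformly in $y$, while the first equals $(\pi\theta_2t)^{-1/2}+\mathcal O(t^{-1/2}e^{-1/(\theta_2t)})$. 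Therefore $\Theta_t(y)=\frac{e^{-y\theta_1/\theta_2}e^{-\kappa_0t}}{2\sqrt{\pi\theta_2t}}+\mathcal O(t^{-1/2}e^{-c/t})$ uniformly on $[\delta,1-\delta]$, and integrating over $t\in(0,\Delta_n)$ while Taylor-expanding $e^{-\kappa_0t}$ (the exponentially small error integrating to $\mathcal O(\Delta_n^{3/2})$) gives $\sigma^2\sum_k\frac{1-e^{-\lambda_k\Delta_n}}{\lambda_k}e_k(y)^2=\Delta_n^{1/2}e^{-y\theta_1/\theta_2}\frac{\sigma^2}{\sqrt{\theta_2\pi}}+\mathcal O(\Delta_n^{3/2})$, uniformly in $y$.

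For the remainder, each summand of $r_{n,i}$ is---using $|e_k(y)|\le\sqrt2$, $\sup_k\lambda_k\E[\langle\xi,e_k\rangle_\theta^2]<\infty$, and $1-e^{-\lambda_k\Delta_n}\le1\wedge(\lambda_k\Delta_n)$---of order $\mathcal O\big(\lambda_k^{-1}(1\wedge(\lambda_k\Delta_n))^2e^{-2\lambda_k(i-1)\Delta_n}\big)$. Splitting the $k$-sum at the threshold $k\asymp\Delta_n^{-1/2}$ where $\lambda_k\Delta_n\asymp1$: the low frequencies contribute $\mathcal O\big(\sum_{k\lesssim\Delta_n^{-1/2}}\lambda_k\Delta_n^2\big)=\mathcal O(\Delta_n^{1/2})$ and the high frequencies $\mathcal O\big(\sum_{k\gtrsim\Delta_n^{-1/2}}\lambda_k^{-1}\big)=\mathcal O(\Delta_n^{1/2})$, so (bounding $e^{-2\lambda_k(i-1)\Delta_n}\le1$) $\sup_{1\le i\le n}|r_{n,i}|=\mathcal O(\Delta_n^{1/2})$. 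Summing over $i$, $\sum_{i=1}^ne^{-2\lambda_k(i-1)\Delta_n}\le(1-e^{-2\lambda_k\Delta_n})^{-1}=\mathcal O(1+(\lambda_k\Delta_n)^{-1})$, hence $\sum_{i=1}^nr_{n,i}=\mathcal O\big(\sum_k\lambda_k^{-1}(1\wedge(\lambda_k\Delta_n))^2(1+(\lambda_k\Delta_n)^{-1})\big)$, and the same dyadic splitting gives $\mathcal O(\Delta_n^{1/2})$. This establishes \eqref{sqin}. For the averaged statement, summing over $i=1,\dots,n$ and dividing by $n\Delta_n^{1/2}$ reproduces $e^{-y\theta_1/\theta_2}\sigma^2/\sqrt{\theta_2\pi}$, while the $r_{n,i}$-part contributes $(n\Delta_n^{1/2})^{-1}\mathcal O(\Delta_n^{1/2})$ and the $\mathcal O(\Delta_n^{3/2})$-part contributes $\Delta_n^{-1/2}\mathcal O(\Delta_n^{3/2})=\mathcal O(\Delta_n)$; since $n\Delta_n=1$ the former also equals $\mathcal O(\Delta_n)$, which is the claim.

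The step I expect to be the main obstacle is the uniform-in-$y$ short-time expansion of the spectral heat sum $\Theta_t(y)$ with the precise constant $(\pi\theta_2t)^{-1/2}$ and an exponentially small error term---this is exactly where Poisson summation and the uniform lower bound $\operatorname{dist}(y,\Z)\ge\delta$ on the compact spatial window $[\delta,1-\delta]$ enter---together with carrying uniformity in both $i$ and $y$ through the dyadic frequency splittings; treating a non-centered initial value is routine but adds bookkeeping.
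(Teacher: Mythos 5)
Your proposal is correct, and for the heart of the computation it takes a genuinely different route from the paper. The paper first reduces to the spectral sum (its Lemma~\ref{lem1} is essentially your Step 1, with the same treatment of the initial-value and off-diagonal terms via Cauchy--Schwarz against $\E\|A_\theta^{1/2}\xi\|_\theta^2$), but then keeps the $i$-dependent variance piece $\frac{(1-e^{-\lambda_k\Delta_n})^2}{2\lambda_k}e^{-2\lambda_k(i-1)\Delta_n}$ in the main term, writes $\sin^2(\pi ky)=\tfrac12(1-\cos(2\pi ky))$, and evaluates everything through a midpoint Riemann-sum lemma (Lemma~\ref{lem:riemann}) whose oscillatory part $(ii)$ is controlled by Fourier-transform decay of $\tilde f$, yielding error terms that are only polynomially small in $\Delta_n$ (of order $\delta^{-2}\Delta_n\wedge\delta^{-1}\Delta_n^{1/2}$); the exact integral $\int_0^\infty\frac{(1-e^{-z^2})^2}{z^2}e^{-2\tau z^2}\,\d z=\sqrt\pi(2\sqrt{1+2\tau}-\sqrt{2\tau}-\sqrt{2(1+\tau)})$ then shows the $i$-dependent contribution is $\mathcal O(i^{-3/2})$ after rescaling, so its sum over $i$ converges. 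You instead absorb the $i$-dependent piece directly into $r_{n,i}$ and verify the two required bounds by the dyadic frequency splitting at $\lambda_k\Delta_n\asymp1$ plus the geometric series in $i$ -- cruder but entirely sufficient, since the proposition only asserts $\sup_i|r_{n,i}|=\mathcal O(\Delta_n^{1/2})$ and $\sum_ir_{n,i}=\mathcal O(\Delta_n^{1/2})$ rather than an explicit form. For the leading term your identity $\frac{1-e^{-\lambda_k\Delta_n}}{\lambda_k}=\int_0^{\Delta_n}e^{-\lambda_kt}\,\d t$ converts the spectral sum into the on-diagonal heat kernel, where Poisson summation (the Jacobi theta identity) gives the constant $(\pi\theta_2 t)^{-1/2}/2$ with an \emph{exponentially} small error uniformly on $[\delta,1-\delta]$ -- cleaner and sharper than the paper's Lemma~\ref{lem:riemann} for this particular purpose, essentially because you reduce to Gaussians rather than to a general $C^2$ function $f(\lambda_k\Delta_n)$. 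The trade-off is that the paper's Riemann-sum lemma is a reusable workhorse for the covariance and variance computations later (Propositions~\ref{propacf} and~\ref{varprop}), where the relevant $k$-sums are not single Laplace transforms; your heat-kernel shortcut is tailored to this one proposition. All the individual estimates you state (the $L^2$-orthogonality of $A_{i,k},B_{i,k},C_{i,k}$, the identity $\E[B_{i,k}^2]+\E[C_{i,k}^2]=\frac{\sigma^2(1-e^{-\lambda_k\Delta_n})}{\lambda_k}-\frac{\sigma^2(1-e^{-\lambda_k\Delta_n})^2}{2\lambda_k}e^{-2\lambda_k(i-1)\Delta_n}$, the two dyadic bounds, and the final $\mathcal O(\Delta_n)$ bookkeeping using $n\Delta_n=1$) check out.
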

At first, the expressions in \eqref{sqin} hinge on the time point $i\Delta_n$, but this dependence becomes asymptotically negligible at first order. Moreover, the first-order expectation of squared increments shows no oscillating term in $y$, only the exponential term. This crucially simplifies the structure. The dependence on the spatial coordinate $y$ is visualized in the left plot of Figure \ref{Fig:1}.
\begin{figure}[t]
\centering
\caption{Realized volatilities and autocorrelations of increments.}
\includegraphics[width=70mm]{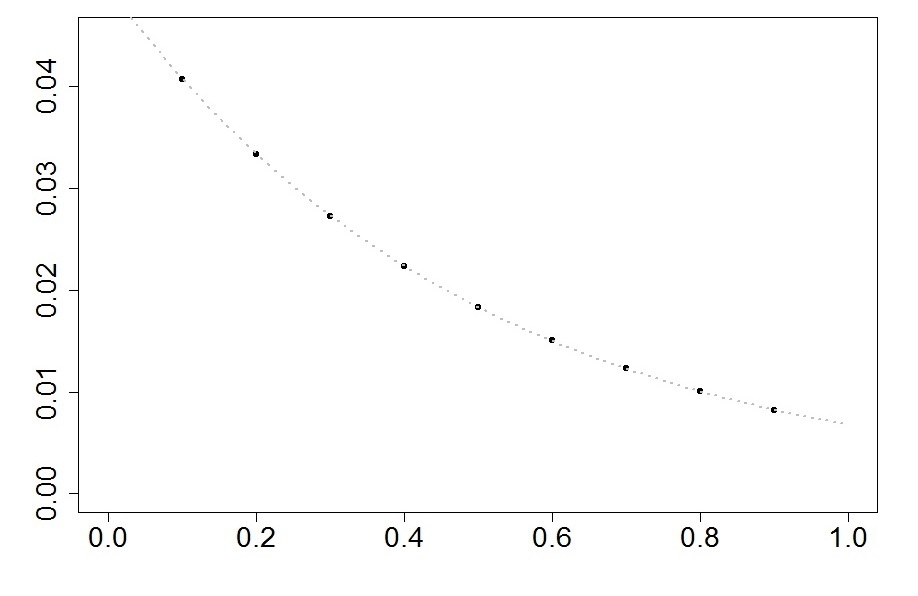}\includegraphics[width=70mm]{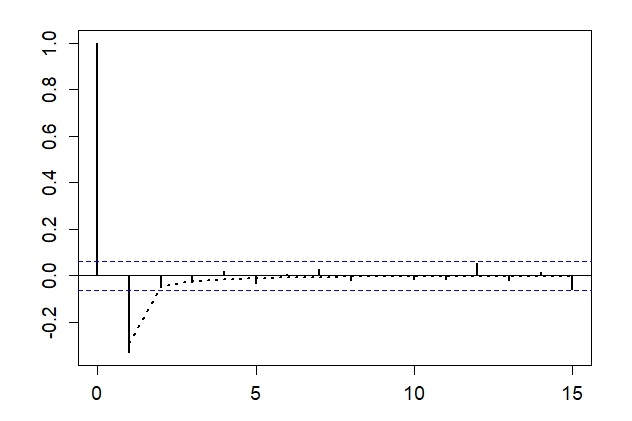}
\begin{quote}{\footnotesize \textit{Notes: In the left panel, the dark points show averages of realized volatilities $n^{-1}\Delta_n^{-1/2}\sum_{i=1}^n (\Delta_i X)^2(y_j)$ for $y_j=j/10,j=1,\ldots,9$ based on 1,000 Monte Carlo replications with $n=1,000$ and a cut-off frequency $K=10,000$. The grey dashed line depicts the function $(\pi\vartheta_2)^{-1/2}\exp(- y\vartheta_1/\vartheta_2)\sigma^2$, $\sigma=1/4,\vartheta_0=0,\vartheta_1=1,\vartheta_2=1/2$. The right panel shows empirical autocorrelations from one of these Monte Carlo iterations for $y=8/10$. The theoretical decay of autocorrelations for lags $|j-i|= 1,\ldots,15$ is added by the dashed line. }}\end{quote}
\label{Fig:1}
\end{figure}
In view of the complex model, a method of moments approach based on \eqref{sqin} brings forth a surprisingly simple consistent estimator for the squared volatility
\begin{align}\label{estm1}
  \hat\sigma^2_{y}=\frac{\sqrt{\pi\, \vartheta_2}}{n\sqrt{\Delta_n}}\sum_{i=1}^n(\Delta_iX)^2(y) \exp(y\,\vartheta_1/\vartheta_2)\,.
\end{align}
In particular, an average of rescaled squared increments (a rescaled realized volatility) depends on $y$ (only) via the multiplication with $\exp(y\,\vartheta_1/\vartheta_2)$. Also, the laws of $(\Delta_iX)^2(y) $ depend on $i$, but we derive a consistent estimator \eqref{estm1} as a non-weighted average over all time instants. The negligibility of the terms $r_{n,i}$ in \eqref{sqin} renders this nice and simple structure. Note that the estimator $\hat\sigma^2_y$ depends on $\theta$. The case of unknown $\theta$ is discussed in Section \ref{sec:estTheta}.

Assessing the asymptotic distribution of the volatility estimator, however, requires a more careful handling of the higher order terms. Especially, we cannot expect that the increments $(\Delta_i X)^2(y)$, $1\le i\le n$, are uncorrelated. 
\begin{prop}\label{propacf}
   On Assumptions \ref{Obs} and \ref{cond}, for constant $\sigma$, the covariances of increments\\ $(\Delta_i X(y))_{i=1,\ldots,n}$ satisfy uniformly in $y\in[\delta,1-\delta]$ for $|j-i|\ge 1$:
  \begin{align}
    \label{acfeq}&\cov\big(\Delta_i X(y),\Delta_j X(y)\big)\\
    &\qquad=-\Delta_n^{1/2}\sigma^2\frac{e^{-y\,\vartheta_1/\vartheta_2}}{2\sqrt{\theta_2\pi}}\,\big(2\sqrt{|j-i|}-\sqrt{|j-i|-1}-\sqrt{|j-i|+1}\big)+r_{i,j}+\mathcal O(\Delta_n^{3/2}),\notag
  \end{align}
  where the remainder terms $r_{i,j}$ are negligible in the sense that $\sum_{i,j=1}^nr_{i,j}=\mathcal{O}(1)$.
\end{prop}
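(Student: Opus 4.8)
The plan is to pass to the Fourier coordinates, compute the coordinate covariances exactly, and then reduce the problem to an analytic approximation of a single series. By Assumption \ref{cond}(ii) and the independence of the driving Brownian motions $(W^k)_k$, the Ornstein--Uhlenbeck coordinates $(x_k)_{k\ge1}$ are mutually independent, so that
\[
\cov\big(\Delta_iX(y),\Delta_jX(y)\big)=\sum_{k\ge1}e_k(y)^2\,\cov(\Delta_ix_k,\Delta_jx_k).
\]
I would first compute $\cov(\Delta_ix_k,\Delta_jx_k)$ in closed form. Taking WLOG $i<j$ and writing $\ell:=j-i\ge1$, I insert the decomposition \eqref{eq:ABC}: since $\xi$ is independent of $B$, the initial-value term $A_{i,k}$ is uncorrelated with the two stochastic convolution terms, and by the It\^o isometry the only overlapping time supports among the latter are those of $B_{i,k}$ and $C_{i,k}$ with $B_{j,k}$, so that only $\cov(B_{i,k},B_{j,k})$ and $\cov(C_{i,k},B_{j,k})$ survive, both elementary deterministic integrals. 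Abbreviating $\beta:=e^{-\lambda_k\Delta_n}$, a short computation gives
\[
\cov(B_{i,k},B_{j,k})+\cov(C_{i,k},B_{j,k})=-\frac{\sigma^2}{2\lambda_k}(1-\beta)^2\beta^{\ell-1}-\frac{\sigma^2}{2\lambda_k}(1-\beta)^2\beta^{i+j-2},
\]
i.e.\ the stationary Ornstein--Uhlenbeck increment covariance $-\tfrac{\sigma^2}{2\lambda_k}(1-e^{-\lambda_k\Delta_n})^2e^{-\lambda_k(\ell-1)\Delta_n}$ plus a non-stationarity correction decaying geometrically in $i+j$, while $\cov(A_{i,k},A_{j,k})=\var(\langle\xi,e_k\rangle_\vartheta)\big(e^{-\lambda_ki\Delta_n}-e^{-\lambda_k(i-1)\Delta_n}\big)\big(e^{-\lambda_kj\Delta_n}-e^{-\lambda_k(j-1)\Delta_n}\big)$.

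Next I would extract the leading term. Writing $e_k(y)^2=e^{-y\vartheta_1/\vartheta_2}\big(1-\cos(2\pi ky)\big)$, the $\cos(2\pi ky)$ part is discarded by the same oscillation estimate used in the proof of Proposition \ref{propExp} (summation by parts against the Dirichlet kernel $\sum_{\nu\le k}\cos(2\pi\nu y)$, which is bounded by $(\sin\pi\delta)^{-1}$ uniformly on $[\delta,1-\delta]$). It then remains to evaluate $\tfrac{\sigma^2}{2}e^{-y\vartheta_1/\vartheta_2}\,S_\ell$ with $S_\ell:=\sum_{k\ge1}\lambda_k^{-1}(1-e^{-\lambda_k\Delta_n})^2e^{-\lambda_k(\ell-1)\Delta_n}$. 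Replacing $\lambda_k$ by $\pi^2\vartheta_2k^2$ (the additive constant $-\vartheta_0+\vartheta_1^2/(4\vartheta_2)$ going into the remainder), rescaling $k=u/\sqrt{\Delta_n}$, passing from the Riemann sum to $\Delta_n^{-1/2}\int_0^\infty$ by Euler--Maclaurin, and substituting $v=\pi\sqrt{\vartheta_2}\,u$, this yields
\[
S_\ell=\frac{\sqrt{\Delta_n}}{\pi\sqrt{\vartheta_2}}\int_0^\infty\frac{(1-e^{-v^2})^2}{v^2}\,e^{-(\ell-1)v^2}\,\d v+(\text{remainder}).
\]
Expanding $(1-e^{-v^2})^2=1-2e^{-v^2}+e^{-2v^2}$ and using the elementary identity $\int_0^\infty v^{-2}(1-e^{-av^2})\,\d v=\sqrt{\pi a}$, the integral equals $\sqrt{\pi}\,(2\sqrt{\ell}-\sqrt{\ell-1}-\sqrt{\ell+1})$; multiplying $-\tfrac{\sigma^2}{2}e^{-y\vartheta_1/\vartheta_2}S_\ell$ out gives exactly the stated leading term, and since $2\sqrt{\ell}-\sqrt{\ell-1}-\sqrt{\ell+1}\asymp-\tfrac14\ell^{-3/2}$ this reproduces the autocorrelation decay displayed in Figure \ref{Fig:1}.

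Finally one bundles the rest into $r_{i,j}$ and checks the per-pair bound $\mathcal O(\Delta_n^{3/2})$ together with $\sum_{i,j}r_{i,j}=\mathcal O(1)$. The initial-value contribution is handled by the telescoping identity $\sum_{i=1}^n\big(e^{-\lambda_ki\Delta_n}-e^{-\lambda_k(i-1)\Delta_n}\big)=e^{-\lambda_k}-1$, which reduces the double sum to $\sum_k e_k(y)^2(e^{-\lambda_k}-1)^2\var(\langle\xi,e_k\rangle_\vartheta)=\mathcal O(1)$, using $\var(\langle\xi,e_k\rangle_\vartheta)=\mathcal O(\lambda_k^{-1})$ from Assumption \ref{cond}(i) together with $\lambda_k\propto k^2$, hence $\sum_k\lambda_k^{-1}<\infty$; for the non-stationarity correction, $\sum_{i}e^{-\lambda_k(i-1)\Delta_n}\le(1-e^{-\lambda_k\Delta_n})^{-1}$ makes the $(i,j)$-summation cancel the factor $(1-e^{-\lambda_k\Delta_n})^2$ and leaves $\tfrac{\sigma^2}{2}\sum_k e_k(y)^2\lambda_k^{-1}=\mathcal O(1)$. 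The \textbf{main obstacle} is the uniform control of the three analytic errors --- from the $\lambda_k$-shift, the oscillation removal and the Euler--Maclaurin step --- simultaneously at order $\mathcal O(\Delta_n^{3/2})$ per pair (a relative error $\mathcal O(\Delta_n)$ against a leading term of size $\Delta_n^{1/2}$) and at order $\mathcal O(1)$ after summation over the $\sim n^2$ pairs. The delicate point is the latter: one must keep the factor $e^{-\lambda_k(\ell-1)\Delta_n}$ and carry out the summation over $(i,j)$ \emph{before} applying the oscillation estimate, whereupon $(1-e^{-\lambda_k\Delta_n})^2$ turns into $n(1-e^{-\lambda_k\Delta_n})$; since $n\Delta_n=1$ and $\sup_k(1-e^{-\lambda_k\Delta_n})/\lambda_k=\mathcal O(\Delta_n)$, the resulting sequence in $k$ is uniformly bounded and of bounded variation, so the oscillation cancellation keeps this contribution $\mathcal O(1)$, whereas the crude bound $|\cos|\le1$ would only give $\mathcal O(\Delta_n^{-1/2})$.
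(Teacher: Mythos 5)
Your proposal is correct and follows the paper's own strategy in all essentials: the same coordinate decomposition \eqref{eq:ABC}, the same exact covariances of the $B$- and $C$-terms (your closed form $-\tfrac{\sigma^2}{2\lambda_k}(1-\beta)^2\beta^{\ell-1}-\tfrac{\sigma^2}{2\lambda_k}(1-\beta)^2\beta^{i+j-2}$ agrees with the paper's $\Sigma^{B,k}_{ij}+\Sigma^{BC,k}_{ij}$ computation), the same split $e_k^2(y)=e^{-y\vartheta_1/\vartheta_2}(1-\cos(2\pi ky))$, the same Riemann-sum-to-integral step and the identity $\int_0^\infty v^{-2}(1-e^{-av^2})\,\d v=\sqrt{\pi a}$ yielding $2\sqrt{\ell}-\sqrt{\ell-1}-\sqrt{\ell+1}$, and geometric-series bounds for the non-stationarity and initial-value remainders (your telescoping bound for the $A_{i,k}$-part is in fact a slightly cleaner variant of the paper's estimate). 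The one genuine divergence is the treatment of the oscillating part: the paper proves a sharper Riemann-sum lemma (Lemma \ref{lem:riemann}$(ii)$, via decay of the Fourier transform of $\tilde f$), which for $f_\ell(0)=0$ gives a per-pair error $\mathcal O(\delta^{-2}\Delta_n^{3/2})$, so the cosine terms land in the uniform $\mathcal O(\Delta_n^{3/2})$ bucket; your Abel-summation/Dirichlet-kernel device only yields $\mathcal O(\delta^{-1}\Delta_n)$ per pair, and you compensate by summing over $(i,j)$ first and absorbing the cosine contribution into $r_{i,j}$, which is still consistent with the statement since $r_{i,j}$ carries no per-pair size constraint. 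That route works, but note two points: the uniform boundedness and bounded variation (in $k$, uniformly in $n$) of the $\ell$-aggregated coefficients is asserted rather than verified and should be checked (it does hold, since the coefficient is dominated by $n(1-e^{-\lambda_k\Delta_n})/\lambda_k\le n\Delta_n=1$ and is a monotone sampling of a unimodal $\mathcal O(1)$ function of $\lambda_k\Delta_n$); and nothing is really saved by avoiding the paper's Lemma \ref{lem:riemann}$(ii)$, because that lemma is needed anyway for Proposition \ref{propExp} (where $f(0)=1$ produces the non-negligible $-\tfrac{\sqrt{\Delta_n}}{2}f(0)$ correction canceling against $R(i)$) and again in the variance computations of Proposition \ref{varprop}. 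Also, your worry about controlling the analytic errors at order $\mathcal O(1)$ after summation over all pairs is stricter than what the statement demands: only $\sum_{i,j}r_{i,j}=\mathcal O(1)$ is claimed, while the $\mathcal O(\Delta_n^{3/2})$ term is a per-pair bound that is not summed.
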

While standard semi-martingale models lead to  discrete increments that are (almost) uncorrelated (martingale increments are uncorrelated), \eqref{acfeq} implies negative autocovariances. This fact highlights another crucial difference between our discretized SPDE model and classical theory from the statistics for semi-martingales literature. We witness a strong negative correlation of consecutive increments for $|j-i|=1$. Autocorrelations decay proportionally to $|j-i|^{-3/2}$ with increasing lags, see Figure \ref{Fig:1}. Covariances hinge on $y$ at first order only via the exponential factor $\exp(-y\,\vartheta_1/\vartheta_2)$, such that autocorrelations do not depend on the spatial coordinate $y$. For lag $|j-i|=1$, the theoretical first-order autocorrelation given in Figure \ref{Fig:1} is $(\sqrt{2}-2)/2\approx -0.292$, while for lag 2 and lag 3 we have factors of approximately $-0.048$ and $-0.025$. For lag 10 the factor decreases to $-0.004$. Also the autocorrelations from simulations in Figure \ref{Fig:1} show this fast decay of serial correlations.

Due to the correlation structure of the increments $(\Delta_i X)_{1\le i\le n}$, martingale central limit theorems as typically used in the volatility literature do not apply. In particular, Jacod's stable central limit theorem, cf.\ \citet[Theorem 3--1]{jacodkey}, which is typically exploited in the literature on high-frequency statistics can not be (directly) applied. \cite{power} and related works provide a stable limit theorem for power variations of stochastic integrals with respect to general Gaussian processes. However, it does also not apply to our model based on the spectral decomposition \eqref{eq:Representation}.

Instead, we exploit the decay of the autocovariances and the profound theory on \emph{mixing} for Gaussian sequences to establish a central limit theorem. In particular, \cite{utev} has proved a central limit theorem for $\rho$-mixing triangular arrays. However, the sequence $(\Delta_{i}X(y))_{i\ge1}$, which is stationary under the stationary initial distribution, has a spectral density with zeros such that we can not conclude that it is $\rho$-mixing based on the results by \cite{rozanov} and the result by \cite{utev} can not be applied directly. A careful analysis of the proof of Utev's central limit theorem shows that the abstract $\rho$-mixing assumption can be replaced by two explicit conditions on the variances of partial sums and on covariances of characteristic functions of partial sums. The resulting generalized central limit theorem has been reported by \citet[Theorem B]{peligradUtev1997} and we can verify these generalized mixing-type conditions in our setup, see Proposition~\ref{prop:mixing}. In particular, we prove in Proposition \ref{varprop} that $\var(\hat\sigma_y^2)$ is of the order $n^{-1}$, such that the estimator attains a usual parametric convergence rate. Altogether, we establish the following central limit theorem for estimator \eqref{estm1}.
\begin{thm}\label{cltm1}
  On Assumptions \ref{Obs} and \ref{cond}, for constant $\sigma$, for any $y\in[\delta,1-\delta]$ the estimator \eqref{estm1} obeys as $n\rightarrow\infty$ the central limit theorem
  \begin{align}\label{clt1}n^{1/2}\big(\hat\sigma_y^2-\sigma^2\big)\stackrel{d}{\rightarrow} \mathcal{N}\big(0,\pi\Gamma\sigma^4\big)\,,\end{align}
  with Gaussian limit law where $\Gamma\approx 0.75$ is a numerical constant analytically given in \eqref{eq:constantVariance}.
\end{thm}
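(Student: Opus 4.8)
The plan is to linearise the estimator, reduce it to a quadratic functional of the Gaussian part of the time increments, and then apply a central limit theorem for weakly dependent triangular arrays. Decompose, as in \eqref{eq:ABC}, $\Delta_iX(y)=\zeta_i(y)+G_i(y)$ with $\zeta_i(y):=\sum_{k\ge1}A_{i,k}e_k(y)$ the contribution of the initial value and $G_i(y):=\sum_{k\ge1}(B_{i,k}+C_{i,k})e_k(y)$, which is a centred Gaussian random field. Since $n\Delta_n=1$ we have $n^{1/2}\Delta_n^{1/2}=1$, and Proposition~\ref{propExp} gives $\E[\hat\sigma_y^2]=\sigma^2+\mathcal O(\Delta_n)$, hence
\[
  n^{1/2}\big(\hat\sigma_y^2-\sigma^2\big)=\sqrt{\pi\vartheta_2}\,e^{y\vartheta_1/\vartheta_2}\,S_n+\mathcal O\big(\Delta_n^{1/2}\big),\qquad S_n:=\sum_{i=1}^n\Big((\Delta_iX)^2(y)-\E\big[(\Delta_iX)^2(y)\big]\Big),
\]
so it suffices to prove $S_n\stackrel{d}{\rightarrow}\mathcal N(0,V)$ for a suitable constant $V$ and to identify $\pi\vartheta_2e^{2y\vartheta_1/\vartheta_2}V=\pi\Gamma\sigma^4$.

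Next I would remove the initial value. Expanding $(\Delta_iX)^2(y)=\zeta_i^2(y)+2\zeta_i(y)G_i(y)+G_i^2(y)$, a direct estimate using independence (Assumption~\ref{cond}(ii)), $\E[\langle\xi,e_k\rangle_\vartheta^2]\lesssim\lambda_k^{-1}$ and $\lambda_k\propto k^2$ gives $\sum_{i=1}^n\E[\zeta_i^2(y)]\lesssim\sum_{k\ge1}\lambda_k^{-1}\min(1,\lambda_k\Delta_n)=\mathcal O(\Delta_n^{1/2})$, so $\sum_i\zeta_i^2(y)=o_{\P}(1)$. Conditionally on $\xi$, $\sum_i\zeta_i(y)G_i(y)$ is centred Gaussian with variance at most $\big(\sum_i\zeta_i^2(y)\big)$ times the operator norm of the covariance matrix of $(G_i(y))_{i\le n}$, which is $\mathcal O(\Delta_n^{1/2})$ by Proposition~\ref{propacf}; hence $\sum_i\zeta_i(y)G_i(y)=o_{\P}(1)$ and $S_n=\sum_{i=1}^n\big(G_i^2(y)-\E[G_i^2(y)]\big)+o_{\P}(1)$, a quadratic functional of the Gaussian vector $(G_i(y))_i$.

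For the variance, Gaussianity gives $\cov(G_i^2(y),G_j^2(y))=2\cov(G_i(y),G_j(y))^2$, and Proposition~\ref{propacf} yields $\cov(G_i(y),G_j(y))=\Delta_n^{1/2}c(y)\gamma_{|i-j|}+\tilde r_{i,j}$ with $c(y)=\sigma^2e^{-y\vartheta_1/\vartheta_2}/\sqrt{\vartheta_2\pi}$, $\gamma_0=1$, $\gamma_\ell=-\tfrac12\big(2\sqrt\ell-\sqrt{\ell-1}-\sqrt{\ell+1}\big)$ for $\ell\ge1$, negligible remainders $\tilde r_{i,j}$, and the decay $\gamma_\ell=\mathcal O(\ell^{-3/2})$, so that $\sum_{\ell\in\Z}\gamma_\ell^2<\infty$. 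Using $n\Delta_n=1$ one gets $\var(S_n)=2\sum_{i,j=1}^n\cov(G_i(y),G_j(y))^2+o(1)\to 2c(y)^2\sum_{\ell\in\Z}\gamma_\ell^2=c(y)^2\big(2+\sum_{\ell\ge1}(2\sqrt\ell-\sqrt{\ell-1}-\sqrt{\ell+1})^2\big)$; multiplying by $\pi\vartheta_2e^{2y\vartheta_1/\vartheta_2}$ the spatial factors cancel and the limiting variance equals $\pi\Gamma\sigma^4$ with $\Gamma=\pi^{-1}\big(2+\sum_{\ell\ge1}(2\sqrt\ell-\sqrt{\ell-1}-\sqrt{\ell+1})^2\big)\approx0.75$, which one checks coincides with \eqref{eq:constantVariance}. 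Running the same computation for partial sums over index sub-blocks — the boundary contributions of order $\mathcal O(1)$ and the remainders being absorbed — shows that partial-sum variances grow asymptotically linearly in the block length; this is Proposition~\ref{varprop}, which also gives $\var(\hat\sigma_y^2)\asymp n^{-1}$.

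It remains to establish the central limit theorem for $S_n$. The stationary version of $(\Delta_iX(y))_i$ has, by Proposition~\ref{propacf}, the autocovariance of fractional Gaussian noise with Hurst index $1/4$, whose spectral density vanishes at the origin; hence this sequence is not $\rho$-mixing and \citet{utev}'s central limit theorem is not directly applicable. Instead I would apply \citet[Theorem~B]{peligradUtev1997}, whose hypotheses on the triangular array replace $\rho$-mixing by (i) the regularity of the partial-sum variances supplied by Proposition~\ref{varprop} and (ii) a bound on covariances of characteristic functions of partial sums over index blocks separated by a gap. For (ii) one exploits the Gaussianity of $(G_i(y))_i$: for disjoint index intervals $I,J$ at distance $g$, the covariance of the characteristic functions of $\sum_{i\in I}(G_i^2(y)-\E[G_i^2(y)])$ and $\sum_{j\in J}(G_j^2(y)-\E[G_j^2(y)])$ is controlled, via standard covariance inequalities for second-order functionals of Gaussian vectors, by a power of the norm of the cross-covariance block of $(G_i(y))_{i\in I\cup J}$, which by $\gamma_\ell=\mathcal O(\ell^{-3/2})$ is small for large $g$; choosing blocks and gaps of suitably tuned polynomial size makes the resulting sum vanish. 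Together with the Lyapunov bound $\sum_i\E\big|G_i^2(y)-\E[G_i^2(y)]\big|^{2+\varepsilon}=\mathcal O\big(n\Delta_n^{1+\varepsilon/2}\big)=o(1)$, \citet[Theorem~B]{peligradUtev1997} yields $S_n\stackrel{d}{\rightarrow}\mathcal N\big(0,2c(y)^2\sum_{\ell\in\Z}\gamma_\ell^2\big)$, and the first display together with Slutsky's lemma gives \eqref{clt1}. The technical heart, and the main obstacle, is the verification of conditions (i) and (ii) (the content of Proposition~\ref{prop:mixing}): because the autocovariances decay only polynomially at rate $\ell^{-3/2}$, the block and gap lengths must be tuned with care, and the non-stationary boundary terms and the non-Gaussian initial-value contributions have to be tracked throughout to keep them negligible after the $n^{1/2}$-normalisation.
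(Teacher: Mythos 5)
Your proposal is correct and follows essentially the same route as the paper: discard the initial-value contribution, reduce to a quadratic functional of the (Gaussian) noise part, compute the limiting variance from the autocovariance structure of Proposition~\ref{propacf} (your Isserlis shortcut $\cov(G_i^2,G_j^2)=2\cov(G_i,G_j)^2$ reproduces exactly the constant $\Gamma$ of \eqref{eq:constantVariance}), and invoke \citet[Theorem B]{peligradUtev1997} with the mixing-type condition verified by splitting each increment into a part driven by the remote past and an independent remainder. The only cosmetic differences are that the paper passes to the stationary version of the increments (Lemma~\ref{lem:neglectA}) so that partial sums over arbitrary blocks reduce to $V_{p,\Delta_n}$, and that the $u^{-3/4}$ bound of Proposition~\ref{prop:mixing} already satisfies $\sum_j\rho_t(2^j)<\infty$ without any further tuning of block and gap sizes.
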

\eqref{clt1} takes a very simple form, relying, however, on several non-obvious computations and bounds in the proof in Section \ref{sec:6}. 

Consider next an estimation of the volatility when we have at hand observations $(\Delta_i X)^2(y),1\le i\le n$, along several spatial points $y_1,\ldots,y_m$.
Proposition~\ref{varprop} shows that the covariances $\cov(\hat\sigma^2_{y_1},\hat\sigma^2_{y_2})$ vanish asymptotically for $y_1\ne y_2$ as long as $m^2\Delta_n\to0$. Therefore, estimators in different spatial points de-correlate asymptotically. Since the realized volatilities hinge on $y$ at first order only via the factor $\exp(-y\vartheta_1/\vartheta_2)$, the first-order variances of the rescaled realized volatilities do not depend on $y\in(0,1)$. We thus define the volatility estimator
\begin{align}\label{estm}
 \hat\sigma_{n,m}^2:=&\frac 1m \sum_{j=1}^m \hat\sigma^2_{y_j} 
 = \,\frac{\sqrt{\pi\, \vartheta_2}}{m\,n\sqrt{\Delta_n}} \sum_{j=1}^m \sum_{i=1}^n(\Delta_iX)^2(y_j) \exp(y_j\,\vartheta_1/\vartheta_2)\,.
\end{align}
\begin{thm}\label{cltm2}
  On Assumptions \ref{Obs} and \ref{cond}, for constant $\sigma$ and for some sequence $m=m_n$, the estimator \eqref{estm} obeys as $n\rightarrow\infty$ the central limit theorem
  \begin{align}\label{clt2}(m_n\cdot n)^{1/2}\big(\hat\sigma_{n,m_n}^2-\sigma^2\big)\stackrel{d}{\rightarrow} \mathcal{N}\big(0,\pi\Gamma\sigma^4\big)\,,\end{align}
  with Gaussian limit law where $\Gamma\approx 0.75$ is a numerical constant analytically given in \eqref{eq:constantVariance}.
\end{thm}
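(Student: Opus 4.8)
The plan is to rewrite $(m_nn)^{1/2}(\hat\sigma^2_{n,m_n}-\sigma^2)$ as the normalised partial sum of a single triangular array obtained by stacking the $m_n$ spatial blocks, and then to invoke the generalised mixing central limit theorem of \citet[Theorem B]{peligradUtev1997}, exactly as in the proof of Theorem~\ref{cltm1}; the one genuinely new ingredient will be the asymptotic de-correlation of the spatial blocks furnished by Proposition~\ref{varprop}. Concretely, set $\zeta_{i,j}:=\sqrt{\pi\vartheta_2}\,e^{y_j\vartheta_1/\vartheta_2}\Delta_n^{-1/2}\big((\Delta_iX)^2(y_j)-\E[(\Delta_iX)^2(y_j)]\big)$ and $N:=m_nn$, so that by \eqref{estm}
\[
  (m_nn)^{1/2}\big(\hat\sigma^2_{n,m_n}-\sigma^2\big)=\frac1{\sqrt N}\sum_{j=1}^{m_n}\sum_{i=1}^n\zeta_{i,j}+(m_nn)^{1/2}\big(\E[\hat\sigma^2_{n,m_n}]-\sigma^2\big).
\]
By Proposition~\ref{propExp} the bias term is $\mathcal O\big((m_nn)^{1/2}\Delta_n\big)=\mathcal O\big((m_n/n)^{1/2}\big)=\mathcal O\big(n^{(\rho-1)/2}\big)\to0$, using $m_n=\mathcal O(n^\rho)$ with $\rho<1/2$, hence negligible. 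Enumerate the remaining double sum lexicographically, $\eta_\ell:=\zeta_{i,j}$ for $\ell=(j-1)n+i$, $1\le i\le n$, $1\le j\le m_n$, and put $S_\ell:=\sum_{l=1}^\ell\eta_l$, so that the assertion reduces to $N^{-1/2}S_N\stackrel{d}{\rightarrow}\mathcal N(0,\pi\Gamma\sigma^4)$.

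For the variance I would argue as follows. Up to the initial-value contribution, which is asymptotically negligible under Assumption~\ref{cond} (the terms $A_{i,k}$ in \eqref{eq:ABC}), the increments $\Delta_iX(y)$ are centred Gaussian, so the Gaussian moment formula yields $\cov\big((\Delta_iX)^2(y),(\Delta_{i'}X)^2(y')\big)=2\big(\cov(\Delta_iX(y),\Delta_{i'}X(y'))\big)^2$, and every covariance of the $\eta_\ell$ is thereby governed by Propositions~\ref{propacf} and~\ref{varprop}. Within a spatial column the variance computation behind Theorem~\ref{cltm1} (Proposition~\ref{varprop}) gives $\var\big(\sum_{i=1}^{\lfloor nt\rfloor}\zeta_{i,j}\big)=t\,\pi\Gamma\sigma^4n+o(n)$, uniformly in $j$ and $t\in[0,1]$, the telescoping of the factor $2\sqrt\ell-\sqrt{\ell-1}-\sqrt{\ell+1}$ appearing in \eqref{acfeq} producing the constant $\Gamma$; across columns, Proposition~\ref{varprop} combined with $m_n^2\Delta_n=\mathcal O(n^{2\rho-1})\to0$ gives $\sum_{j\ne k}\big|\cov\big(\sum_i\zeta_{i,j},\sum_{i'}\zeta_{i',k}\big)\big|=o(m_nn)$. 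Together these imply $\var(S_\ell)=\ell\,\pi\Gamma\sigma^4+o(N)$ uniformly in $\ell\le N$; in particular $\var(N^{-1/2}S_N)\to\pi\Gamma\sigma^4$ and $\ell\mapsto\var(S_\ell)$ is asymptotically linear, which is the variance-regularity hypothesis of \citet[Theorem B]{peligradUtev1997}.

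It then remains to check the two remaining hypotheses of \citet[Theorem B]{peligradUtev1997}. Gaussianity together with Proposition~\ref{propExp} gives $\sup_{i,j}\E[\zeta_{i,j}^4]=\mathcal O(1)$, so the Lyapunov condition holds and the Lindeberg condition follows. For the mixing-type condition on covariances of characteristic functions of partial sums of $(\eta_\ell)$: for index blocks lying inside one spatial column this is precisely Proposition~\ref{prop:mixing}, and for blocks belonging to different columns the required decay follows from the spatial de-correlation estimates of Proposition~\ref{varprop}, since for jointly Gaussian families the covariance of the characteristic functions of two quadratic functionals is controlled by the Hilbert--Schmidt norm of their cross-covariance, which by the oscillation of the eigenfunctions $e_k$ in \eqref{eq:eigenf} is of smaller order than the within-column one and is summed against the factor $m_n^2\Delta_n\to0$. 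Feeding these bounds into \citet[Theorem B]{peligradUtev1997} yields $N^{-1/2}S_N\stackrel{d}{\rightarrow}\mathcal N(0,\pi\Gamma\sigma^4)$, which is \eqref{clt2}. For a fixed integer $m$ the argument simplifies: by the Cram\'er--Wold device together with Proposition~\ref{prop:mixing} applied to linear combinations, and by Proposition~\ref{varprop}, one obtains jointly $\big(n^{1/2}(\hat\sigma^2_{y_j}-\sigma^2)\big)_{1\le j\le m}\stackrel{d}{\rightarrow}\mathcal N(0,\pi\Gamma\sigma^4I_m)$, and averaging over $j$ gives \eqref{clt2}.

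I expect the main obstacle to be this last step: verifying the covariance-of-characteristic-functions condition for partial sums that straddle several spatial columns, i.e.\ combining the time-series mixing bound of Proposition~\ref{prop:mixing} with the spatial de-correlation of Proposition~\ref{varprop} uniformly in the -- possibly diverging -- number $m_n$ of columns, while making sure that the constants in Propositions~\ref{propExp}, \ref{propacf} and~\ref{varprop} are uniform in $y\in[\delta,1-\delta]$ so that the estimates survive the summation over the $m_n$ blocks.
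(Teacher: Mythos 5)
Your overall strategy (reduce to a triangular array and apply \citet[Theorem B]{peligradUtev1997}) is the paper's strategy, and your handling of the bias, the variance conditions and the Lyapunov condition is essentially sound. The genuine gap is exactly the point you flag at the end but do not resolve: by stacking the $(i,j)$-grid lexicographically you make the enumeration gap $u$ in the mixing-type condition meaningless for pairs of blocks that lie in different spatial columns but cover overlapping time windows. For such blocks (e.g.\ all of column $j$ against most of column $j+1$) the random variables are contemporaneous, so the independence decomposition that drives Proposition~\ref{prop:mixing} — splitting off the stochastic integrals over $(r\Delta_n,\infty)$, which are exactly independent of the past — simply does not exist; the only smallness available is the spatial de-correlation of order $\Delta_n^{1/2}|y_j-y_{j'}|^{-1}$ from Proposition~\ref{varprop}, which does not decay in $u$ at all. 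Theorem B requires a single function $\rho_t(u)$, not depending on $n$, with $\sum_{j\ge1}\rho_t(2^j)<\infty$; your sketched Hilbert--Schmidt bound produces at best an $o(1)$-in-$n$ term that is constant in $u$, and converting that into an admissible $\rho_t$ (via the relation between $u$, $m_n$ and $n$) is a delicate additional argument that you have not carried out. As written, the key hypothesis of the limit theorem is therefore not verified on your arrangement of the array.

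The paper sidesteps this entirely by a different structural choice: it aggregates over space \emph{first}, defining the time-indexed array $\zeta_{n,i}=m^{-1/2}\sqrt{\theta_2\pi}\sum_{j=1}^m(\Delta_iX)^2(y_j)e^{y_j\theta_1/\theta_2}$ as in \eqref{zetaM}, reduces to the stationary Gaussian version $\tilde\zeta_{n,i}$ via Lemma~\ref{lem:neglectA} (note that under Assumption~\ref{cond} alone $\xi$ need not be Gaussian, so your appeal to Gaussianity of $\Delta_iX$ also needs this reduction, which is a dedicated lemma rather than a one-line remark), and then only temporal dependence remains between array elements. The mixing condition is then Corollary~\ref{cor:mixing}, which extends Proposition~\ref{prop:mixing} to the spatial average by showing that the cross-spatial contribution is $\mathcal O(\Delta_n^{3/2}p(u-1)^{-3/2}m\log m)$ (estimate \eqref{spatialcov}, using that $m\cdot\min_j|y_j-y_{j-1}|$ is bounded below), hence dominated by the within-point bound; the same $m\log m$ correction appears in the block-variance estimate, and the Lyapunov condition follows from $\E[\tilde\zeta_{n,i}^4]=\mathcal O(m^2\Delta_n^2)$ together with $m_n=\mathcal O(n^\rho)$, $\rho<1/2$. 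If you want to keep your route, you would have to prove a genuinely new mixing bound for contemporaneous cross-column blocks with an $n$-free, dyadically summable $\rho_t$; adopting the paper's spatially averaged array removes that obstacle at no cost.
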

Since $m n$ is the total number of observations of the random field $X$ on the grid, the estimator $\hat\sigma_{n,m}^2$ achieves the parametric rate $\sqrt{mn}$ as for independent observations if the discretizations are finer in time than in space under Assumption \ref{Obs}.
The constant $\pi\Gamma\approx 2.357$ in the asymptotic variance is not too far from 2 which is clearly a lower bound, since $2\sigma^4$ is the Cramér-Rao lower bound for estimating the variance $\sigma^2$ from i.i.d.\,standard normals. In fact, the term $(\pi\Gamma-2)\sigma^4$ is the part of the variance induced by non-negligible covariances of squared increments.

In order to use the previous central limit theorem to construct confidence sets or tests for the volatility, let us also provide a normalized version applying a quarticity estimator.

\begin{prop}\label{propfclt}Suppose that $\sup_k\lambda_k\E[\langle \xi,e_k\rangle_{\theta}^l]<\infty$ for $l=4,8$. Then, on Assumptions \ref{Obs} and \ref{cond}, for constant $\sigma$, the quarticity estimator
  \begin{align}\label{quart}\tilde\sigma_{n,m}^4=\frac{\theta_2\pi}{3m}\sum_{j=1}^m\sum_{i=1}^n (\Delta_i X)^4(y_j) \exp(2y_j\theta_1/\theta_2)\,,\end{align}
  satisfies $\tilde\sigma_{n,m_n}^4 \stackrel{\P}{\rightarrow}\sigma^4$. In particular, we have as $n\to\infty$ that
  \begin{align}\label{fclt}(m_n\cdot n)^{1/2}(\pi\Gamma\tilde\sigma_{n,m_n}^4)^{-1/2}\big(\hat\sigma_{n,m_n}^2-\sigma^2\big)\stackrel{d}{\rightarrow} \mathcal{N}\big(0,1\big)\,.\end{align}
\end{prop}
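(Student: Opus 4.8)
The plan is to split the feasible statement \eqref{fclt} into the unfeasible central limit theorem \eqref{clt2} of Theorem~\ref{cltm2} and the consistency $\tilde\sigma_{n,m_n}^4\stackrel{\P}{\to}\sigma^4$; given both, \eqref{fclt} follows from Slutsky's theorem by dividing $(m_nn)^{1/2}(\hat\sigma_{n,m_n}^2-\sigma^2)$ by $(\pi\Gamma\tilde\sigma_{n,m_n}^4)^{1/2}\stackrel{\P}{\to}(\pi\Gamma\sigma^4)^{1/2}>0$. Hence everything reduces to showing $\E[\tilde\sigma_{n,m_n}^4]\to\sigma^4$ and $\Var(\tilde\sigma_{n,m_n}^4)\to0$.

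For the first moment I would use the decomposition $\Delta_iX(y)=G_i(y)+R_i(y)$ with $G_i(y):=\sum_{k\ge1}(B_{i,k}+C_{i,k})e_k(y)$ the centered Gaussian contribution of the stochastic convolution and $R_i(y):=\sum_{k\ge1}A_{i,k}e_k(y)$ the initial-value part, $A_{i,k},B_{i,k},C_{i,k}$ as in \eqref{eq:ABC}; the two summands are independent since $\xi\perp B$. Thus $\E[(\Delta_iX)^4(y)]=\E[G_i(y)^4]+6\,\E[G_i(y)^2]\,\E[R_i(y)^2]+\E[R_i(y)^4]$, the odd mixed moments vanishing. Wick's formula gives $\E[G_i(y)^4]=3\big(\E[G_i(y)^2]\big)^2$ with $\E[G_i(y)^2]=\E[(\Delta_iX)^2(y)]-\E[R_i(y)^2]$, which by Proposition~\ref{propExp} equals $\Delta_n^{1/2}e^{-y\theta_1/\theta_2}\sigma^2/\sqrt{\theta_2\pi}$ up to remainders whose sup over $i$ is $\mathcal O(\Delta_n^{1/2})$ and which, even after squaring, sum to $\mathcal O(\Delta_n)$ — a refinement of the bookkeeping already present in the proof of Proposition~\ref{propExp}; hence $3\sum_{i=1}^n\big(\E[G_i(y)^2]\big)^2=3\,e^{-2y\theta_1/\theta_2}\sigma^4/(\theta_2\pi)+\mathcal O(\Delta_n)$ since $n\Delta_n=1$. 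The $R_i$-terms are negligible: $\sum_i\E[R_i(y)^2]=\mathcal O(\Delta_n^{1/2})$ (from $\sup_k\lambda_k\E[\langle\xi,e_k\rangle_\theta^2]<\infty$, exactly as for the $r_{n,i}$ of Proposition~\ref{propExp}), so $\sum_i\E[G_i(y)^2]\E[R_i(y)^2]=\mathcal O(\Delta_n)$; and $\sum_i\E[R_i(y)^4]=\mathcal O(\Delta_n^{1/2})$ by a Rosenthal-type inequality applied to the summands $A_{i,k}e_k(y)$, independent in $k$ by Assumption~\ref{cond}(ii), together with $\sup_k\lambda_k\E[\langle\xi,e_k\rangle_\theta^4]<\infty$, the exponential factors $e^{-2\lambda_k(i-1)\Delta_n}$ and $\lambda_k\propto k^2$. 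Multiplying by $\tfrac{\theta_2\pi}{3}e^{2y_j\theta_1/\theta_2}$ (which cancels the $y_j$-dependence at leading order) and averaging over $j$ yields $\E[\tilde\sigma_{n,m_n}^4]=\sigma^4+o(1)$.

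For the variance I would expand
\[
\Var(\tilde\sigma_{n,m_n}^4)=\frac{(\theta_2\pi)^2}{9m^2}\sum_{j,j'=1}^m e^{2(y_j+y_{j'})\theta_1/\theta_2}\sum_{i,i'=1}^n\Cov\big((\Delta_iX)^4(y_j),(\Delta_{i'}X)^4(y_{j'})\big)
\]
and estimate the covariances by reduction to the Gaussian parts $G_i$, the $R_i$-contributions being controlled as above through $\sup_k\lambda_k\E[\langle\xi,e_k\rangle_\theta^l]<\infty$ for $l=4,8$ — the moment $l=8$ being what bounds $\Var((\Delta_iX)^4(y))\le\E[(\Delta_iX)^8(y)]$. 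For jointly centered Gaussian $U,V$, Isserlis' theorem gives $\Cov(U^4,V^4)=c_1\Var(U)\Var(V)\Cov(U,V)^2+c_2\Cov(U,V)^4$ for absolute constants $c_1,c_2$, since every contributing Wick pairing uses an even number $\ge2$ of $U$–$V$ edges. With $\Var(\Delta_iX(y))\asymp\Delta_n^{1/2}$ and $|\Cov(\Delta_iX(y),\Delta_{i'}X(y))|=\mathcal O(\Delta_n^{1/2}(|i-i'|\vee1)^{-3/2})$ from Proposition~\ref{propacf}, one gets $\sum_{i,i'}\Cov((\Delta_iX)^4(y),(\Delta_{i'}X)^4(y))=\mathcal O\big(\Delta_n^2\,n\sum_{\ell\ge1}\ell^{-3}+\Delta_n^{1/2}\big)=\mathcal O(\Delta_n^{1/2})$, so the diagonal part $j=j'$ contributes $\mathcal O(\Delta_n^{1/2}/m)\to0$. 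For $j\ne j'$ I would use $\Cov(\Delta_iX(y_j),\Delta_{i'}X(y_{j'}))=\sum_k\Cov(\Delta_ix_k,\Delta_{i'}x_k)\,e_k(y_j)e_k(y_{j'})$ together with the fact that $e_k(y_j)e_k(y_{j'})$ carries the oscillating factors $\cos(\pi k(y_j\pm y_{j'}))$; summation by parts in $k$ then makes these cross-covariances of strictly smaller order than the resonant case $y_j=y_{j'}$ — this is the same spatial de-correlation mechanism behind the bounds on $\Cov(\hat\sigma_{y_j}^2,\hat\sigma_{y_{j'}}^2)$ in Proposition~\ref{varprop} — so that under Assumption~\ref{Obs}, i.e.\ $m^2\Delta_n\to0$, the off-diagonal part is $o(1)$ after the $m^{-2}$ normalisation. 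Hence $\Var(\tilde\sigma_{n,m_n}^4)\to0$, $\tilde\sigma_{n,m_n}^4\stackrel{\P}{\to}\sigma^4$, and \eqref{fclt} follows.

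The main obstacle, I expect, is the variance bound: it requires combining the only moderately fast autocovariance decay $|i-i'|^{-3/2}$ of increments from Proposition~\ref{propacf} (whose square just barely yields a summable $|i-i'|^{-3}$ after passing to fourth powers), the careful moment bookkeeping for the non-Gaussian initial value via the hypotheses at $l=4,8$, and the spatial cross-terms $j\ne j'$; by contrast the first-moment expansion is essentially a fourth-power replica of the computation proving Proposition~\ref{propExp}.
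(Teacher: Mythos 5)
Your proposal is correct and follows essentially the same route as the paper: reduce \eqref{fclt} to consistency of $\tilde\sigma_{n,m_n}^4$ plus Theorem~\ref{cltm2} via Slutsky, compute the expectation through the Gaussian fourth-moment identity combined with the squared-increment expansion of Proposition~\ref{propExp}, and kill the variance through Isserlis-type covariance bounds using the temporal decay of Proposition~\ref{propacf}, with the $l=4,8$ moment assumptions handling the initial-value terms $A_{i,k}$. The only differences are bookkeeping details: the paper bounds all spatial cross-terms crudely by $m^2$ times the maximal temporal covariance and invokes $m^2\Delta_n\to0$ from Assumption~\ref{Obs} (displaying only the $(j-i-1)^{-6}$ term), whereas you retain the dominant $\Var\cdot\Var\cdot\Cov^2$ contribution of order $|i-i'|^{-3}$ and additionally use spatial de-correlation for $j\ne j'$; both versions give a vanishing variance.
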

The mild additional moment assumptions in Proposition \ref{propfclt} are particularly satisfied when the initial condition is distributed according to the stationary distribution. \eqref{fclt} holds as $m_n\to\infty$, and also for any fix $m$, as $n\to\infty$.\\
Finally, let us briefly discuss the effect of different boundary conditions. In the case of a non-zero Dirichlet boundary condition our SPDE model reads as
\begin{gather*}
  \begin{cases}
    \d X_t(y)=(A_\theta X_t(y)) \d t +\sigma_t\,\d B_t(y)&\qquad \text{in }[0,1],\\
     X_0=\xi,\quad X_t(0)=g(0),\quad X_t(1)=g(1),   
  \end{cases}
\end{gather*}
where we may assume that the function $g$ is twice differentiable with respect to $y$. Then $\bar X_t:=(X_t-g)\in H_\theta$ is a mild solution of the zero boundary problem
\begin{gather*}
  \begin{cases}
    \d \bar X_t=(A_\theta \bar X_t) \d t -(A_\theta g)\d t+\sigma_t\,\d B_t&\qquad \text{in }[0,1],\\
     \bar X_0=\xi-g,\quad\bar X_t(0)=0,\quad \bar X_t(1)=0.   
  \end{cases}
\end{gather*}
Hence,
\[
  \bar X(t)=e^{tA_\theta}(\xi-g)+\int_0^te^{(t-s)A_\theta}A_\theta g\,\d t+\int_0^te^{(t-s)A_\theta}\sigma_t\,\d B_t\quad a.s.
\]
In the asymptotic analysis of estimator \eqref{estm}, we would have to take into account the additional deterministic and known term $\int_0^te^{(t-s)A_\theta}A_\theta g\,\d t$, which is possible with only minor modifications.

Especially for the cable equation, Neumann's boundary condition is also of interest and corresponds to sealed ends of the cable. In this case we have $\frac{\partial}{\partial y}X_t(0)=\frac{\partial}{\partial y}X_t(1)=0$ and $\theta_1=0$, such that we only need to replace the eigenfunction $(e_k)$ from \eqref{eq:eK} by 
\(
  \bar e_k(y)=\sqrt 2\cos(\pi ky).
\) We expect that our estimators achieve the same results in this case. 

\section{Estimation of the integrated volatility with unknown parameters in the differential operator}\label{sec:estTheta}
Concerning the estimation of $\theta=(\theta_0,\theta_1, \theta_2)^{\top}$, we have to start with the following restricting observation: For any fixed $m$ the process $Z:=(X_t(y_1),\dots,X_t(y_m))_{t\in[0,1]}$ is a multivariate Gaussian process with continuous trajectories (for a modification and under a Gaussian initial distribution), cf.\ \citet[Thm.\ 5.22]{daPratoZabczyk1992}. Consequently, its probability distribution is characterized by the mean and the covariance operator. It is then well known from the classical theory on statistics for stochastic processes, that even with continuous observations of the process $Z$, the underlying drift parameter cannot be consistently estimated from observations within a fixed time horizon. Therefore, $\theta_0$ is not identifiable in our high-frequency observation model. Any estimator of $\theta$ should only rely on the covariance operator. Proposition~\ref{propacf} reveals that for $\Delta_n\to0$ the latter depends only on
\[\sigma^2_0:=\sigma^2/\sqrt{\theta_2}\quad\text{and}\quad \varkappa:=\theta_1/\theta_2.\]
If both $\sigma$ and $\theta$ are unknown, the \emph{normalized volatility parameter} $\sigma^2_0$ and the \emph{curvature parameter} $\varkappa$ thus seem the only identifiable parameters based on high-frequency observations over a fixed time horizon. This allows, for instance, the complete calibration of the parameters in the yield curve model \eqref{eq:spde2}.

In order to estimate $\varkappa$, we require observations in at least two distinct spatial points $y_1$ and $y_2$. The behavior of the estimates of the realized volatilities along different $y$, as seen in \eqref{sqin} and Figure~\ref{Fig:1}, motivate the following curvature estimator
\begin{align}\label{kappaest1}
  \tilde\varkappa=\frac{\log\big(\sum_{i=1}^n (\Delta_i X)^2(y_1)\big)-\log\big(\sum_{i=1}^n (\Delta_i X)^2(y_2)\big)}{y_2-y_1}\,.
\end{align} 
The previous analysis and the delta method facilitate a central limit theorem with $\sqrt{n}$-convergence rate for this estimator $\tilde\varkappa$.\\ 
Especially in the finance and econometrics literature, heterogeneous volatility models with dynamic time-varying volatility are of particular interest. In fact, different in nature than a maximum likelihood approach, our moment estimators \eqref{estm1} and \eqref{estm} serve as semi-parametric estimators in the general time-varying framework. In the sequel, we consider $(\sigma_s)_{s\in[0,1]}$ a time-varying volatility function.
\begin{ass}\label{assvola}Assume $(\sigma_s)_{s\in[0,1]}$ is a strictly positive deterministic function that is $\alpha$-Hölder regular with Hölder index $\alpha\in(1/2,1]$, that is,
\(\big|\sigma_{t+s}-\sigma_{t}\big|  \le C\,s^{\alpha},\)\;
for all $0\leq t< t+s\leq1$, and some positive constant $C$.
\end{ass}
The \emph{integrated volatility} $\int_0^1\sigma_s^2\, \d s$ aggregates the overall variability of the model and is one standard measure in econometrics and finance to quantify risk. For volatility estimation of a continuous semi-martingale, the properties of the realized volatility $RV$ in a nonparametric setting with time-varying volatility are quite close to the parametric case. In fact, the central limit theorem $\sqrt{n}(RV-\sigma^2)\stackrel{d}{\rightarrow}\mathcal{N}(0,2\sigma^4)$ extends to $\sqrt{n}(RV-\int_0^1\sigma_s^2\,\d s)\stackrel{d}{\rightarrow}\mathcal{N}(0,2\int_0^1\sigma_s^4\,\d s)$ under mild conditions on $(\sigma_s)_{s\in[0,1]}$, see \cite[Sec.\ 5.6.1]{JP}. In our model, however, such a generalization is more difficult due to the non-negligibility of the stochastic integrals over the whole past $B_{i,k}$ in \eqref{eq:ABC}.\\ 
For unknown $\theta$ and unknown time-dependent volatility, we propose a least squares approach for the semi-parametric estimation of
\[IV_0:=\frac{1}{\sqrt{\theta_2}}\int_0^1\sigma_s^2\,\d s\quad\text{and}\quad \varkappa=\theta_1/\theta_2\,.\]
In view of \eqref{sqin}, we rewrite 
\begin{align}\label{zj}
  Z_j:=\frac1{n\sqrt{\Delta_n}}\sum_{i=1}^n(\Delta_iX)^2(y_j)=f_{IV_{0},\varkappa}(y_j)+\delta_{n,j}\qquad\text{for}\quad f_{s,k}(y):=\frac s{\sqrt\pi}e^{-ky},
\end{align}
in the form of a parametric regression model with the parameter $\eta=(IV_0,\varkappa)$ and with non-standard observation errors $\delta_{n,j}$. We define 
\begin{equation}\label{eq:leastSquares}
 (\widehat{IV}_{\negthinspace 0},\hat{\varkappa}):=\argmin_{s,k}\sum_{j=1}^{m}\Big(Z_{j}-f_{s,k}(y_{j})\Big)^{2}. 
\end{equation}
This M-estimator is a classical least squares estimator. Combining classical theory on minimum contrast estimators with
an analysis of the random variables $(\delta_{n,j})$ yields the
following limit theorem. For simplicity we suppose that $IV_0$ and $\varkappa$ belong to a compact parameter set. 
\begin{thm}\label{thm:leastSquares}
  Grant Assumptions \ref{Obs} with $y_1=\delta$, $y_m=1-\delta$ and $m|y_j-y_{j-1}|$ uniformly bounded from above and from below, \ref{cond} and \ref{assvola}. Let $\eta=(IV_0,\varkappa)\in\Xi$ for some compact subset $\Xi\subset(0,\infty)\times[0,\infty)$.
  Then the estimators $(\widehat{IV}_{\negthinspace 0},\hat{\varkappa})$ from \eqref{eq:leastSquares} satisfy for a sequence $m=m_n\to\infty$ with $\sqrt{m_n}\Delta_n^{\alpha'-1/2}\to 0$ for some $\alpha'<\alpha$, as $n\to\infty$ the central limit theorem 
  \begin{align}\label{cltlq}
    (m_n\cdot n)^{1/2}\big((\widehat{IV}_{\negthinspace 0},\hat{\varkappa})^\top-({IV}_{0},\varkappa)^\top\big)\stackrel{d}{\rightarrow} \mathcal N\Big(0,\frac{\Gamma\pi}{\theta_2}\int_0^1\negthinspace \sigma_s^4\,\d s\, V(\eta)^{-1}U(\eta)V(\eta)^{-1}\Big)
  \end{align}
  with strictly positive definite matrices
  \begin{align}
      U(\eta)&:=\begin{pmatrix} \int_{\delta}^{1-\delta}e^{-4\varkappa y}\,\d y & -IV_{0}\int_{\delta}^{1-\delta}ye^{-4\varkappa y}\,\d y\\ -IV_{0}\int_{\delta}^{1-\delta}ye^{-4\varkappa y}\,\d y & IV_0^2\int_{\delta}^{1-\delta}y^2e^{-4\varkappa y}\,\d y\end{pmatrix},\label{eq:U}\\
      V(\eta)&:=\begin{pmatrix} \int_{\delta}^{1-\delta}e^{-2\varkappa y}\,\d y & -IV_{0}\int_{\delta}^{1-\delta}ye^{-2\varkappa y}\,\d y\\ -IV_{0}\int_{\delta}^{1-\delta}ye^{-2\varkappa y}\,\d y & IV_0^2\int_{\delta}^{1-\delta}y^2e^{-2\varkappa y}\,\d y\end{pmatrix}.\label{eq:V}
  \end{align}
\end{thm}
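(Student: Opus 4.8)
The plan is to treat \eqref{zj} as a nonlinear regression problem with deterministic regression function $f_{s,k}$ and vanishing-but-correlated errors $\delta_{n,j}$, and to combine standard $M$-estimation asymptotics with the probabilistic control of the $\delta_{n,j}$ furnished by the earlier propositions. First I would establish \emph{consistency} of $(\widehat{IV}_0,\hat\varkappa)$: the map $(s,k)\mapsto m^{-1}\sum_{j=1}^m(f_{IV_0,\varkappa}(y_j)-f_{s,k}(y_j))^2$ converges, by the assumed regularity of the grid and a Riemann-sum argument, to $\int_\delta^{1-\delta}(f_{IV_0,\varkappa}(y)-f_{s,k}(y))^2\,\d y$, which is a continuous contrast uniquely minimized at the true $\eta$ on the compact set $\Xi$ (two exponentials $s\pi^{-1/2}e^{-ky}$ agreeing on an interval forces $s,k$ to agree); meanwhile $m^{-1}\sum_j\delta_{n,j}^2\to0$ and $m^{-1}\sum_j\delta_{n,j}f_{s,k}(y_j)\to0$ uniformly in $(s,k)\in\Xi$, using $\E[\delta_{n,j}]=\mathcal O(\Delta_n^{1/2})$ from Proposition~\ref{propExp} (after rescaling), $\var(\delta_{n,j})=\mathcal O(n^{-1})$ and the asymptotic de-correlation from Proposition~\ref{varprop}. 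This gives $(\widehat{IV}_0,\hat\varkappa)\overset{\P}{\to}(IV_0,\varkappa)$.

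\textbf{Linearization.} Next I would carry out the usual Taylor expansion of the first-order condition. Writing $g_j(\eta):=f_{s,k}(y_j)$ and $\dot g_j(\eta)$ for its gradient in $(s,k)$, the normal equations give
\begin{align*}
  0=\sum_{j=1}^m\dot g_j(\hat\eta)\big(Z_j-g_j(\hat\eta)\big)
   =\sum_{j=1}^m\dot g_j(\hat\eta)\big(\delta_{n,j}+g_j(\eta_0)-g_j(\hat\eta)\big),
\end{align*}
so that, after a mean-value expansion of $g_j(\hat\eta)-g_j(\eta_0)$ and dividing by $m$,
\begin{align*}
  \Big(\tfrac1m\sum_{j=1}^m\dot g_j\dot g_j^\top+o_\P(1)\Big)(\hat\eta-\eta_0)
   =\tfrac1m\sum_{j=1}^m\dot g_j(\eta_0)\,\delta_{n,j}.
\end{align*}
By Riemann-sum convergence, $m^{-1}\sum_j\dot g_j(\eta_0)\dot g_j(\eta_0)^\top\to (1-2\delta)^{-1}V(\eta)$ up to the normalization bookkeeping — the entries of $V(\eta)$ in \eqref{eq:V} are precisely $\int_\delta^{1-\delta}\dot f\dot f^\top\,\d y$ with the parametrization $\dot f_{s,k}(y)=\pi^{-1/2}e^{-ky}(1,-sy)^\top$. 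Hence $\hat\eta-\eta_0$ is asymptotically $V(\eta)^{-1}$ times a weighted average of the errors $\delta_{n,j}$, and the whole limit theorem reduces to a central limit theorem for $(mn)^{1/2}\,m^{-1}\sum_{j=1}^m\dot f_{IV_0,\varkappa}(y_j)\,\delta_{n,j}$.

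\textbf{The CLT for the weighted error sum.} This is the main obstacle and the place where the SPDE-specific work of the paper is reused. Unrolling $\delta_{n,j}=Z_j-f_{IV_0,\varkappa}(y_j)$ back to squared increments, one has $n^{1/2}\delta_{n,j}=n^{-1/2}\sum_{i=1}^n\big((\Delta_iX)^2(y_j)\Delta_n^{-1/2}-\E[\cdot]\big)+$ (deterministic $\mathcal O(n^{1/2}\Delta_n^{1/2})$ bias terms, which vanish since $m^{1/2}\Delta_n^{\alpha'-1/2}\to0$ controls also the time-varying-$\sigma$ approximation error via Assumption~\ref{assvola}). So the target is
\begin{align*}
  (mn)^{1/2}\cdot\frac1m\sum_{j=1}^m\dot f(y_j)\,\delta_{n,j}
  =\frac1{\sqrt m}\sum_{j=1}^m\dot f(y_j)\cdot\frac1{\sqrt n}\sum_{i=1}^n\Big(\tfrac{(\Delta_iX)^2(y_j)}{\sqrt{\Delta_n}}-\E\big[\tfrac{(\Delta_iX)^2(y_j)}{\sqrt{\Delta_n}}\big]\Big),
\end{align*}
a double-indexed triangular array. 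I would obtain its asymptotic normality exactly as in the proof of Theorem~\ref{cltm2}: for each fixed spatial point the inner sum satisfies a mixing-type CLT via \citet[Theorem B]{peligradUtev1997} (the $\rho$-mixing surrogate conditions being verified in Proposition~\ref{prop:mixing}), and across $j$ the asymptotic de-correlation of $\hat\sigma^2_{y_j}$ from Proposition~\ref{varprop} (valid since $m^2\Delta_n\to0$, which follows from $m=\mathcal O(n^\rho)$, $\rho<1/2$) makes the spatial sum behave like an average of independent contributions. The covariance bookkeeping is the delicate part: one checks that the time-varying volatility enters only through $\int_0^1\sigma_s^4\,\d s$ (replacing $\sigma^4$) in the per-point asymptotic variance $\pi\Gamma\,\theta_2^{-1/2}e^{-2\varkappa y_j}\int_0^1\sigma_s^4\d s$ after rescaling the increments by $\exp(y_j\varkappa)$ implicit in $f$ — this is the analogue of Proposition~\ref{propfclt} in the nonparametric regime and requires redoing the fourth-moment computation with $\sigma_s$ frozen on each time step up to the $\alpha$-Hölder error. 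Assembling, $(mn)^{1/2}m^{-1}\sum_j\dot f(y_j)\delta_{n,j}\overset{d}{\to}\mathcal N\big(0,\Gamma\pi\theta_2^{-1/2}\int_0^1\sigma_s^4\d s\cdot U(\eta)\big)$ with $U(\eta)$ in \eqref{eq:U} equal to $\int_\delta^{1-\delta}e^{-2\varkappa y}\dot f\dot f^\top\d y$ (the extra $e^{-2\varkappa y}$ from the per-point variance is what distinguishes $U$ from $V$). The sandwich form $V^{-1}UV^{-1}$ in \eqref{cltlq} then follows by the delta-method/Slutsky step above, and positive definiteness of $U,V$ is immediate since they are Gram matrices of the linearly independent functions $y\mapsto e^{-\varkappa y}$ and $y\mapsto ye^{-\varkappa y}$ on $[\delta,1-\delta]$ against strictly positive weights. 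The hard part throughout is not the $M$-estimation shell but verifying that the joint (in $i$ and $j$) fluctuations of the rescaled squared increments obey the required CLT with exactly the stated covariance, for which I would lean heavily on the machinery already developed for Theorems~\ref{cltm1}–\ref{cltm2} and Proposition~\ref{propfclt}, extended to accommodate Assumption~\ref{assvola}.
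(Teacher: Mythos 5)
Your proposal follows essentially the same route as the paper's proof: consistency of the least squares estimator via uniform convergence of the contrast, linearization of the normal equations with the Hessian converging to (a multiple of) $V(\eta)$, and a CLT for the weighted error sum $\sqrt{mn}\,m^{-1}\sum_j\dot f(y_j)\delta_{n,j}$ via the Peligrad--Utev mixing-type conditions and the spatial de-correlation, with the time-varying volatility handled by redoing the moment computations under the H\"older assumption --- which is precisely what the paper formalizes in its Propositions on $\E[\delta_{n,j}]$ and $\cov(\delta_{n,j},\delta_{n,k})$ before applying the Cram\'er--Wold device. The only blemishes are bookkeeping slips in intermediate constants (the per-point variance carries $\theta_2^{-1}$, not $\theta_2^{-1/2}$, and $\int_\delta^{1-\delta}e^{-2\varkappa y}\dot f\dot f^\top\,\d y=\pi^{-1}U(\eta)$ rather than $U(\eta)$), which cancel correctly into the stated sandwich form $\frac{\Gamma\pi}{\theta_2}\int_0^1\sigma_s^4\,\d s\,V^{-1}UV^{-1}$.
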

It can be easily deduced from the proof that we obtain an analogous result for any fixed $m\ge2$ where the integrals $\int_\delta^{1-\delta}h(y)\,\d y$, for a generic function $h$, have to be replaced by $\frac{1}{m}\sum_{j=1}^mh(y_j)$. In particular, the Cauchy-Schwarz inequality shows that the determinant of $V(\eta)$ is non-zero and thus $V$ is invertible. The condition $y_1=\delta$ and $y_m=1-\delta$ is without loss of generality, in general we integrate over $[y_1,y_m]$ in the entries of $U(\eta)$ and $V(\eta)$. A sufficient condition that $\sqrt{m_n}\Delta_n^{\alpha'-1/2}\to 0$ for some $\alpha'<\alpha$, is $\alpha>(1+\rho)/2$ in Assumption \ref{assvola} with $\rho$ from Assumption \ref{Obs}. For any fix $m$, we only need that $\alpha>1/2$.\\
For the case that $\theta_2$ is known, we can generalize the results from Section \ref{sec:3} to the estimation of integrated volatility. We keep to the notation for the estimators motivated by the parametric model. 
\begin{cor}\label{gclt}
  On Assumptions \ref{Obs}, \ref{cond} and \ref{assvola}, for any $y\in[\delta,1-\delta]$, the estimator \eqref{estm1} obeys as $n\rightarrow\infty$ the central limit theorem
  \begin{align}\label{gcltm1eq}n^{1/2}\Big(\hat\sigma_y^2-\int_0^1\sigma_s^2\,\d s\Big)\stackrel{d}{\rightarrow} \mathcal{N}\Big(0,\pi\Gamma\int_0^1\sigma_s^4\,\d s\Big)\,.\end{align} 
	For a sequence $m=m_n\to\infty$ with $\sqrt{m_n}\Delta_n^{\alpha'-1/2}\to 0$ for some $\alpha'<\alpha$, the estimator \eqref{estm} obeys as $n\rightarrow\infty$ the central limit theorem
  \begin{align}\label{gclteq}
    (m_n\cdot n)^{1/2}\Big(\hat\sigma_{n,m_n}^2-\int_0^1\sigma_s^2\,\d s\Big)\stackrel{d}{\rightarrow}\mathcal{N}\Big(0,\pi\Gamma\int_0^1\sigma_s^4\,\d s\Big)\,.
  \end{align}
	When $\sup_k\lambda_k\E[\langle \xi,e_k\rangle_{\theta}^l]<\infty$ for $l=4,8$, we obtain the normalized version
  \begin{align}\label{gcltf}(m_n\cdot n)^{1/2}(\pi\Gamma\tilde\sigma_{n,m_n}^4)^{-1/2}\Big(\hat\sigma_{n,m_n}^2-\int_0^1\sigma_s^2\,\d s\Big)\stackrel{d}{\rightarrow} \mathcal{N}\big(0,1\big)\,,\end{align}
with the quarticity estimator from \eqref{quart} which satisfies $\tilde\sigma_{n,m_n}^4\stackrel{\P}{\rightarrow}\int_0^1\sigma_s^4\,\d s$.
\end{cor}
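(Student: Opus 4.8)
The plan is to reduce the time-varying setting to the constant-volatility results of Section~\ref{sec:3} by a block-freezing localization, controlling every approximation error through the Hölder modulus of Assumption~\ref{assvola} and the requirement $\alpha>1/2$. First I would establish time-varying analogues of Propositions~\ref{propExp} and~\ref{propacf}. Inspecting the decomposition \eqref{eq:ABC}, the contribution of $C_{i,k}$ to $\E[(\Delta_iX)^2(y)]$ is $\int_{(i-1)\Delta_n}^{i\Delta_n}\sigma_s^2 e^{-2\lambda_k(i\Delta_n-s)}\,\d s$ and that of $B_{i,k}$ is $(e^{-\lambda_k\Delta_n}-1)^2\int_0^{(i-1)\Delta_n}\sigma_s^2 e^{-2\lambda_k((i-1)\Delta_n-s)}\,\d s$. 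The delicate point -- and the reason the extension beyond constant $\sigma$ is harder, cf.\ the discussion after Assumption~\ref{assvola} -- is that $B_{i,k}$ weights $\sigma^2$ over the whole past. I would exploit that for large $\lambda_k$ the exponential localizes the integration to an $\mathcal O(\lambda_k^{-1})$-neighbourhood of $(i-1)\Delta_n$, whereas for small $\lambda_k$ the prefactor $(e^{-\lambda_k\Delta_n}-1)^2=\mathcal O(\lambda_k^2\Delta_n^2)$ is itself negligible; combining either regime with $|\sigma_s^2-\sigma_{i\Delta_n}^2|\le C|s-i\Delta_n|^\alpha$ and summing over $k$ as in the proof of Proposition~\ref{propExp}, one may replace $\sigma^2$ by the frozen value $\sigma_{i\Delta_n}^2$ at the price of an additional error of order $\Delta_n^{1/2+\alpha}$ per increment.

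This should yield, uniformly in $y\in[\delta,1-\delta]$,
\[
  \E\big[(\Delta_iX)^2(y)\big]=\Delta_n^{1/2}e^{-y\theta_1/\theta_2}\frac{\sigma_{i\Delta_n}^2}{\sqrt{\theta_2\pi}}+\tilde r_{n,i}+\mathcal O\big(\Delta_n^{3/2}\big),\qquad \sum_{i=1}^n|\tilde r_{n,i}|=\mathcal O\big(\Delta_n^{\alpha-1/2}\big),
\]
and, analogously, \eqref{acfeq} with $\sigma^2$ replaced by $\sigma_{(i\wedge j)\Delta_n}^2$. Since $n^{-1}\Delta_n^{-1/2}=\Delta_n^{1/2}$ and the Riemann-sum error for an $\alpha$-Hölder function gives $n^{-1}\sum_i\sigma_{i\Delta_n}^2-\int_0^1\sigma_s^2\,\d s=\mathcal O(\Delta_n^\alpha)$, the bias obeys $\E[\hat\sigma_y^2]-\int_0^1\sigma_s^2\,\d s=\mathcal O(\Delta_n^\alpha)=o(n^{-1/2})$, which is exactly where $\alpha>1/2$ enters; for the spatial average the bias is again $\mathcal O(\Delta_n^\alpha)$, which is $o((m_nn)^{-1/2})$ precisely under $\sqrt{m_n}\Delta_n^{\alpha'-1/2}\to0$.

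Next I would prove the central limit theorem for the centred statistic by blocking. Partition $\{1,\dots,n\}$ into consecutive blocks $I_1,\dots,I_{B_n}$ of common length $p_n\to\infty$ with $p_n=o(n^{1-1/(2\alpha)})$ (so $B_n\to\infty$), and freeze $\sigma\equiv\sigma_{\tau_\ell}$, $\tau_\ell:=\Delta_n\min I_\ell$, on $I_\ell$. By Assumption~\ref{assvola} the within-block freezing error is $\mathcal O((p_n\Delta_n)^\alpha)$ per increment, hence $o(n^{-1/2})$ after normalization by the choice of $p_n$, so the centred, $n^{1/2}$-scaled estimator equals, up to $o_\P(1)$, a sum $n^{1/2}\sum_\ell S_{n,\ell}$ in which $S_{n,\ell}$ depends only on the increments in $I_\ell$ at the constant volatility $\sigma_{\tau_\ell}$. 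Because $\cov(\Delta_iX(y),\Delta_jX(y))$ decays like $|i-j|^{-3/2}$ (Proposition~\ref{propacf}), hence $\cov(\Delta_iX,\Delta_jX)^2$ like $|i-j|^{-3}$, the cross-block covariances are summable and the $S_{n,\ell}$ become asymptotically uncorrelated; within a block the variance computation underlying Proposition~\ref{varprop}, based on $\cov(U^2,V^2)=2\cov(U,V)^2$ for jointly centred Gaussian $(U,V)$, gives $n\Var(S_{n,\ell})=\pi\Gamma\,\sigma_{\tau_\ell}^4\,(|I_\ell|/n)(1+o(1))$, so that $n\Var\big(\sum_\ell S_{n,\ell}\big)\to\pi\Gamma\int_0^1\sigma_s^4\,\d s$, the quartic integral emerging as the Riemann limit of $\sum_\ell(|I_\ell|/n)\sigma_{\tau_\ell}^4$. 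Since $\sigma$ is bounded away from $0$ and $\infty$ on $[0,1]$, the generalized mixing-type conditions of Proposition~\ref{prop:mixing} carry over to the block-constant volatility, and \citet[Theorem B]{peligradUtev1997}, applied to the array of asymptotically independent rows $S_{n,\ell}$, yields \eqref{gcltm1eq}. For \eqref{gclteq} I would additionally invoke the asymptotic decorrelation of $(\hat\sigma^2_{y_j})_{1\le j\le m}$ when $m^2\Delta_n\to0$ (Proposition~\ref{varprop}), valid uniformly under Assumption~\ref{assvola}, so that $\hat\sigma^2_{n,m_n}$ has the same limiting variance at rate $(m_nn)^{1/2}$.

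Finally, for the quarticity estimator I would decompose $\Delta_iX(y)=G_i+R_i$ with $G_i$ the Gaussian part stemming from $B_{i,k},C_{i,k}$ and $R_i$ the initial-value part from $A_{i,k}$; independence of $\xi$ and $B$ together with $\E[G_i]=0$ give $\E[(\Delta_iX)^4(y)]=3(\E[G_i^2])^2+6\E[G_i^2]\E[R_i^2]+\E[R_i^4]$, and the moment condition for $l=4$ controls the $R_i$-terms, so that by the first step $\E[(\Delta_iX)^4(y)]=3\Delta_n e^{-2y\theta_1/\theta_2}\sigma_{i\Delta_n}^4/(\theta_2\pi)+o(\Delta_n)$ uniformly in $i$. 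Hence $\E[\tilde\sigma_{n,m_n}^4]=m_n^{-1}\sum_j\Delta_n\sum_i\sigma_{i\Delta_n}^4+o(1)\to\int_0^1\sigma_s^4\,\d s$, and the moment bound for $l=8$ combined with the covariance decay of Proposition~\ref{propacf} gives $\Var(\tilde\sigma_{n,m_n}^4)=\mathcal O((m_nn)^{-1})\to0$, whence $\tilde\sigma_{n,m_n}^4\stackrel{\P}{\rightarrow}\int_0^1\sigma_s^4\,\d s$; combining this with \eqref{gclteq} by Slutsky's theorem produces the studentized limit \eqref{gcltf}. The hard part is the first step: quantifying how the seemingly long-range dependence of $B_{i,k}$ on the entire history of $\sigma$ collapses, after summation over the spectral index $k$, to a purely local dependence with error controlled by the Hölder modulus of $\sigma$; once this is in place, all remaining arguments are robust adaptations of the constant-volatility proofs of Section~\ref{sec:3}.
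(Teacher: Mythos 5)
Your plan is sound and its exponent bookkeeping is right, but it reaches the corollary by a different architecture than the paper, so let me compare. The key analytic idea is shared: to replace $\sigma_s$ by a locally frozen value inside the long-memory term $B_{i,k}$, you split the past into a recent window (Hölder modulus) and a distant part killed either by the exponential kernel (large $\lambda_k$) or by the small prefactor $(e^{-\lambda_k\Delta_n}-1)^2$ (small $\lambda_k$); the paper does exactly this in Proposition~\ref{apprprop} with a $k$-dependent cutoff $J(i,k)$, which is also why the summed error carries $(\log\lambda_k)^\alpha$ factors and the statement is formulated with $\alpha'<\alpha$ — your per-increment error $\Delta_n^{1/2+\alpha}$ should be read as $\Delta_n^{1/2+\alpha'}$. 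Where you genuinely diverge is the CLT: the paper never blocks; it extends the full variance--covariance computation of Proposition~\ref{varprop} to time-dependent volatility (Proposition~\ref{zjprop}), obtaining the per-point variance $\Gamma\theta_2^{-1}e^{-2\varkappa y}\int_0^1\sigma_s^4\,\d s$ and the spatial decorrelation directly, and then verifies the conditions of \citet[Thm.~B]{peligradUtev1997} for the original increment-level triangular array, the Lyapunov and mixing-type conditions (Proposition~\ref{prop:mixing}, Corollary~\ref{cor:mixing}) carrying over since $\sigma$ is bounded. You instead freeze the volatility on big blocks of length $p_n$ and apply Peligrad--Utev at the block level; your choice $p_n=o(n^{1-1/(2\alpha)})$ is precisely what makes the within-block freezing error $o(n^{-1/2})$ after normalization, and the cross-block covariances are indeed negligible (of order $n\Delta_n/p_n$ against a total variance of order $n\Delta_n$, by the $|i-j|^{-3}$ decay of squared-increment covariances). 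What your route buys is modularity: the long computation of Proposition~\ref{zjprop} is replaced by Proposition~\ref{varprop} applied blockwise plus a Riemann sum in $\sigma^4_{\tau_\ell}$. What it costs are checks you only assert and would have to write out: the blockwise use of Proposition~\ref{varprop} has relative error $\mathcal O(1\wedge p_n^{-1}\Delta_n^{\eta-1})$, so $p_n$ must grow at least like a power of $n$ (compatible with your constraint, but it must be imposed); the block sums are still dependent, so the variance, Lindeberg and characteristic-function conditions of Theorem~B must be re-verified for the blocked array; and for \eqref{gclteq} the asymptotic spatial decorrelation under time-varying volatility still needs an argument of the type the paper supplies in Proposition~\ref{zjprop}. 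Your quarticity step agrees with the paper's (Gaussian fourth-moment identity, the $l=4,8$ moment conditions to dispose of the initial-value part, variance bound, Slutsky).
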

Using a spatial average of sums of fourth powers of increments as in \eqref{quart} and plug-in, yields as well a normalized version of \eqref{cltlq}.

\section[Simulations]{Simulations\label{sec:5}}
\begin{figure}[t]
\centering
\caption{Scaled variances of estimators and comparison of curvature estimators.}
\includegraphics[width=6.44cm]{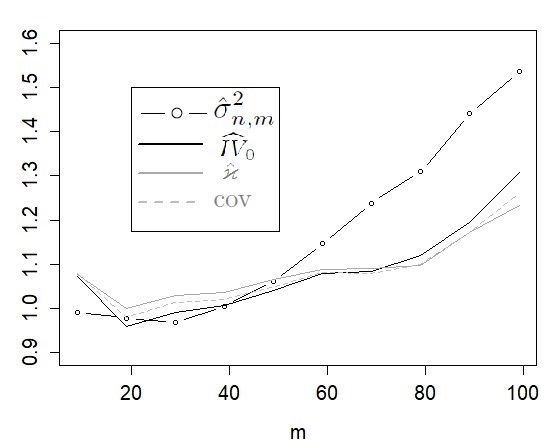}\includegraphics[width=6.716cm]{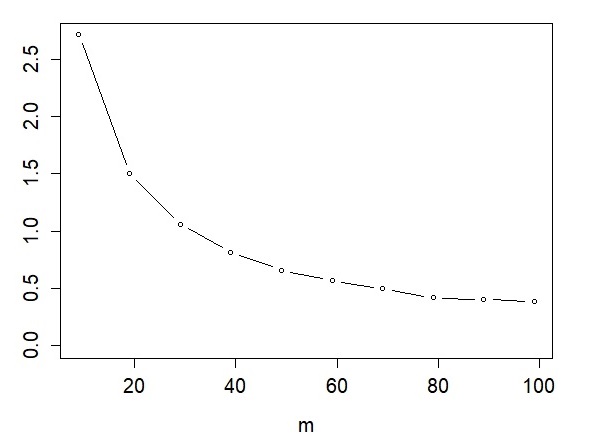}
\begin{quote}{\footnotesize \textit{Notes: Ratios of Monte Carlo and theoretical asymptotic variances and covariance of the estimators \eqref{estm} and \eqref{eq:leastSquares} for $n=1,000$ and $m=i\cdot 10-1,i=1,\ldots,10$ (left). The lines $\hat\sigma^2_{n,m},\widehat{IV}_{\negthinspace 0},\hat{\varkappa}$ and $\operatorname{cov}$ give these ratios for estimator \eqref{estm} and the variances and covariance of the least squares estimator \eqref{eq:leastSquares}. Ratio of variances of estimator $\hat\varkappa$ from \eqref{eq:leastSquares} and $\tilde\varkappa$ from \eqref{kappaest1} (right).}} \end{quote}
\label{Fig:simplot}
\end{figure}
Since our SPDE model leads to Ornstein-Uhlenbeck coordinates with decay-rates $\lambda_k\propto k^2,k\ge 1$ going to infinity, an \emph{exact simulation} of the Ornstein-Uhlenbeck processes turns out to be crucial. In particular, an Euler-Maruyama scheme is not suitable to simulate this model. For some solution process $x(t)=x(0)e^{-\lambda t}+\sigma\int_0^t e^{-\lambda(t-s)} \,\d W_s$ with fixed volatility $\sigma>0$ and decay-rate $\lambda>0$, increments can be written
\begin{align*}x(t+\tau)=x(t)e^{-\lambda \tau}+\sigma\int_t^{t+\tau}e^{-\lambda(t+\tau-s)} \,\d W_s\,,\end{align*}
with variances $\sigma^2(1-\exp(-2\lambda\tau))/(2\lambda)$. Let $\Delta_N$ be a grid mesh with $N=\Delta_N^{-1}\in\N$. Given $x(0)$, the exact simulation iterates
\begin{align*}x(t+\Delta_N)=x(t)e^{-\lambda \Delta_N}+\sigma\sqrt{\frac{1-\exp(-2\lambda\Delta_N)}{2\lambda}} \mathcal{N}_t\,,\end{align*}
at times $t=0,\Delta_N,\ldots, (N-1)\Delta_N$ with i.i.d.\;standard normal random variables $\mathcal{N}_t$. We set $x(0)=0$. We choose $N=n=1,000$ to simulate the discretizations of $K$ independent Ornstein-Uhlenbeck processes with decay-rates given by the eigenvalues $\eqref{eq:eK},k=1,\ldots,K$. The cut-off frequency $K$ needs to be sufficiently large. In the following, we take $K=10,000$. When $K$ is set too small, we witness an additional negative bias in the estimates due to the deviation of the simulated to the theoretical model. For a spatial grid $y_j,j=1,\ldots,m$, $y_j\in[0,1]$, we obtain the simulated observations of the field by
\(X_{i/n}(y_j)=\sum_{k=1}^K x_k(i/n)\,e_k(y_j),\)
using the eigenfunctions from \eqref{eq:eigenf}.

We implement the parametric model \eqref{eq:spde2} with $\kappa=2/5$ and $\sigma=1/4$, or \eqref{eq:spde} with $\theta_0=0,\theta_1=1$ and $\theta_2=1/5$, respectively. The curvature parameter is thus $\varkappa=5$ and $\sigma_0^2=\sqrt{5}/16$. We generate discrete observations equi-spaced in time and space with $y_j=j/(m+1),j=1,\ldots,m$. The least squares estimator \eqref{eq:leastSquares} can be computed using the R function \lstinline$nls$, see \cite{nls}.\\ More precisely, with \lstinline$rv$ denoting the vector $(\sqrt{\Delta_n}\sum_{i=1}^n (\Delta_i X)^2(y_j))_{1\le j\le m}$ and \lstinline$y$ the vector of spatial coordinates, the command\,\footnote{The R-code for these simulations is available at \url{www.github.com/bibinger/SPDEstatistics}}\\[.2cm]
\lstinline$ls<-nls(formula=rv~theta1/sqrt(pi)*exp(-theta2*V2),start=list(theta1 = 1, theta2 = 1),$\\
\hspace*{10cm}\lstinline$data=data.frame(t(rbind(rv,y))))$\\[.2cm]
performs the least squares estimation and \lstinline$coef(ls)[[1]]$ and \lstinline$coef(ls)[[2]]$ give the estimates $\hat\sigma_0^2$ and $\hat\varkappa$, respectively. In our simulations, different start values did not influence the results. 
For configurations with $m\le \sqrt{n}\approx 31.6$, our Monte Carlo results are considerably close to the theoretical asymptotic values.  
In the left-hand plot of Figure \ref{Fig:simplot}, we visualize the ratios of normalized Monte Carlo (co-)variances from 3,000 iterations divided by their theoretical asymptotic counterparts for the volatility estimator \eqref{estm}, which uses the true parameter $\kappa$, as well as for the least squares estimator \eqref{eq:leastSquares}. The normalization is a multiplication with factors $mn$, for fixed $n=1,000$ and for increasing values $m=i\cdot 10-1,i=1,\ldots,10$. To determine the asymptotic covariance matrix in \eqref{cltlq}, we compute means for the different values of $m$ in \eqref{eq:U} and \eqref{eq:V} instead of the fixed integrals in the limit theorem. Values of these ratios close to 1 confirm a good fit by the asymptotic theory. In the cases $m=9,19,29<\sqrt{n}$, all ratios are close to one. Especially for estimator \eqref{estm}, the Monte Carlo variance is very close to the theoretical value. In the cases $m>\sqrt{n}$, when Assumption \ref{Obs} is apparently violated, the ratios slowly increase with the strongest increase for estimator $\hat\sigma^2_{n,m}$. The plot indicates heuristically that when $m>\sqrt{n}$, the rate of the estimator will not be $\sqrt{mn}$. Moreover, it shows how fast the ratio increases with a growing number of spatial observations. The right-hand plot of Figure \ref{Fig:simplot} compares the variances of the two curvature estimators $\hat\varkappa$ from \eqref{eq:leastSquares} and $\tilde\varkappa$ from \eqref{kappaest1}. For $m\le 30$ spatial observations and $n=1,000$, $\tilde\varkappa$ outperforms the least squares estimator, even though the convergence rate $\sqrt{n}$ is slower than the rate $\sqrt{mn}$ of the least squares estimator. 
However, in case of a larger number of spatial coordinates, the improved rate facilitates a more efficient estimation based on the least squares approach.
\begin{figure}[t]
\centering
\caption{Q--Q plots for the feasible central limit theorem \eqref{gcltf}.}
\includegraphics[width=6.0cm]{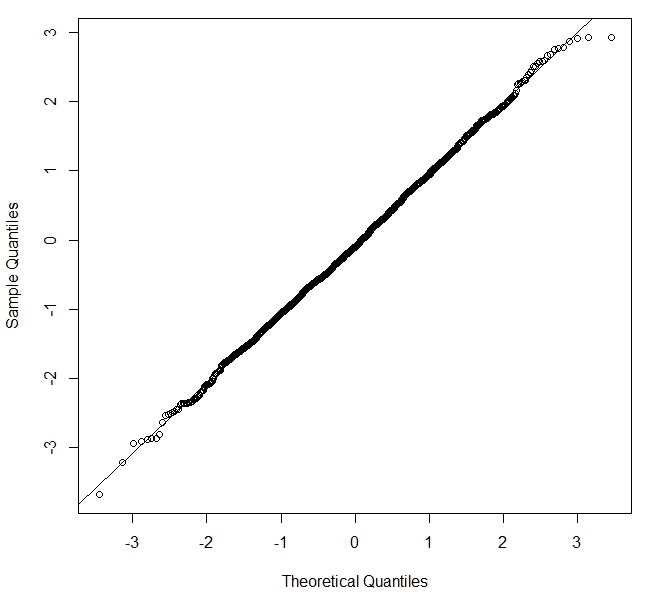}\includegraphics[width=6.0cm]{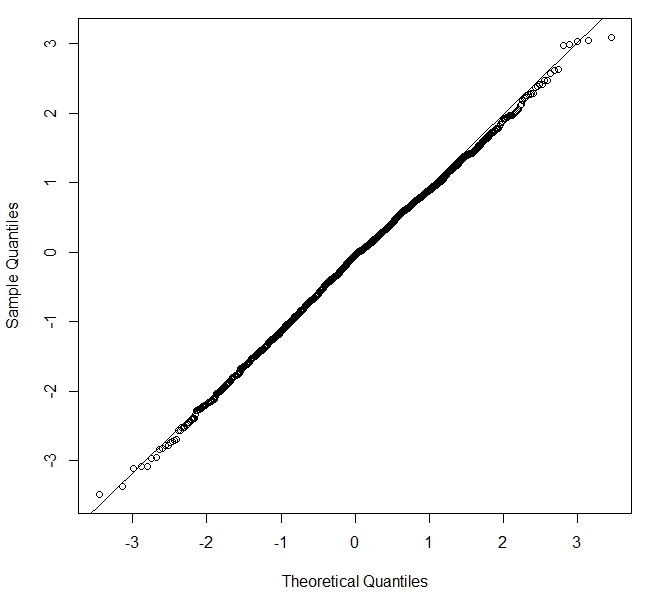}
\begin{quote}{\footnotesize \textit{Notes: Monte Carlo rescaled estimation errors of the statistics in the left-hand side of \eqref{gcltf} compared to a standard normal distribution, for $n=1,000$ and $m=1$ (left) and $m=9$ equi-spaced spatial observations (right).}} \end{quote}
\label{Fig:qq}
\end{figure}
\noindent
Finally, we investigate the semi-parametric estimation of the integrated volatility in the nonparametric model with a time-varying volatility function. We consider \(\sigma_t=1-0.2\sin\big(\tfrac{3}{4}\pi\,t),\,t\in[0,1].\) 
This is a smooth deterministic volatility that mimics the typical intra-day pattern with a strong decrease at the beginning and a moderate increase at the end of the interval. We analyze the finite-sample accuracy of the feasible central limit theorem \eqref{gcltf}. To this end, we rescale the estimation errors of estimator \eqref{estm} with the square root of the estimated integrated quarticity, based on estimator \eqref{quart}, times $\sqrt{\Gamma\pi}$. We compare these statistics rescaled with the rate $\sqrt{mn}$ from 3,000 Monte Carlo iterations to the standard Gaussian limit distribution in the Q--Q plots in Figure \ref{Fig:qq} for $n=1,000$ and $m=1$ and $m=9$, respectively. The plots confirm a high finite-sample accuracy. In particular, the Monte Carlo empirical variances are very close to the ones predicted by the rate and the variance of the limit law as long as $m$ is of moderate size compared to $n$.
\section{Proofs\label{sec:6}}
\subsection{Proof of Proposition~\ref{propExp} and Proposition~\ref{propacf}}
Before we prove the formula for the expected value of the squared increments, we need some auxiliary lemmas.
\begin{lem}\label{lem1}
  On Assumptions \ref{Obs} and \ref{cond}, when $\sigma$ is constant, there is a sequence $(r_{n,i})_{1\le i\le n}$ satisfying $\sum_{i=1}^nr_{n,i}=\mathcal O(\Delta_n^{1/2})$ such that
  \begin{align}
  \E[(\Delta_{i}X)^{2}(y)] & =\sigma^{2}\sum_{k\ge1}\frac{1-e^{-\lambda_{k}\Delta_{n}}}{\lambda_{k}}\Big(1-\frac{1-e^{-\lambda_{k}\Delta_{n}}}{2}e^{-2\lambda_{k}t_{i-1}}\Big)e_{k}^{2}(y)+r_{n,i}\,.\label{eq:Moment}
  \end{align}
 \end{lem}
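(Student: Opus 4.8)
\noindent\emph{Proof strategy.} The plan is to substitute the factor representation \eqref{eq:Representation} into $(\Delta_iX)^2(y)$ and to reduce the resulting double series to its diagonal. Since $\Delta_iX(y)=\sum_{k\ge1}\Delta_ix_k\,e_k(y)$ with the series converging in $L^2(\Omega)$,
\[
  \E[(\Delta_iX)^2(y)]=\sum_{k,l\ge1}\E[\Delta_ix_k\,\Delta_ix_l]\,e_k(y)e_l(y).
\]
I split each increment as $\Delta_ix_k=A_{i,k}+B_{i,k}+C_{i,k}$ following \eqref{eq:ABC} and use three independence properties: the Brownian motions $(W^k)_{k\ge1}$ are independent, $\xi$ is independent of $B$, and $(\langle\xi,e_k\rangle_\theta)_{k\ge1}$ are independent by Assumption~\ref{cond}(ii). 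Hence $\E[A_{i,k}B_{i,l}]=\E[A_{i,k}C_{i,l}]=0$ for all $k,l$ (as $\E[B_{i,l}]=\E[C_{i,l}]=0$), $\E[B_{i,k}B_{i,l}]=\E[C_{i,k}C_{i,l}]=\E[B_{i,k}C_{i,l}]=0$ whenever $k\ne l$, and $\E[B_{i,k}C_{i,k}]=0$ since $B_{i,k}$ and $C_{i,k}$ are stochastic integrals over the disjoint intervals $[0,t_{i-1}]$ and $[t_{i-1},t_i]$. The only off-diagonal contributions that survive are the deterministic terms $\E[A_{i,k}]\E[A_{i,l}](\cdots)$, which vanish identically in the centred case of Assumption~\ref{cond}(i), and on the diagonal $\E[(\Delta_ix_k)^2]=\E[A_{i,k}^2]+\E[B_{i,k}^2]+\E[C_{i,k}^2]$.

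Next I extract the dominant contribution, which comes from $B$ and $C$. The It\^o isometry gives $\E[B_{i,k}^2]=\sigma^2(1-e^{-\lambda_k\Delta_n})^2(1-e^{-2\lambda_kt_{i-1}})/(2\lambda_k)$ and $\E[C_{i,k}^2]=\sigma^2(1-e^{-2\lambda_k\Delta_n})/(2\lambda_k)$, and with $u_k:=1-e^{-\lambda_k\Delta_n}$ and the identity $1-e^{-2\lambda_k\Delta_n}=u_k(2-u_k)$ the two terms combine to
\[
  \E[B_{i,k}^2]+\E[C_{i,k}^2]=\frac{\sigma^2u_k}{\lambda_k}\Big(1-\frac{u_k}{2}e^{-2\lambda_kt_{i-1}}\Big),
\]
which is precisely the summand in \eqref{eq:Moment}. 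It thus remains to collect the leftover terms into $r_{n,i}:=\sum_{k\ge1}\E[A_{i,k}^2]e_k^2(y)+\sum_{k\ne l}\E[A_{i,k}]\E[A_{i,l}]e_k(y)e_l(y)$ and to prove $\sum_{i=1}^nr_{n,i}=\mathcal O(\Delta_n^{1/2})$ uniformly in $y\in[\delta,1-\delta]$.

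For the first sum in $r_{n,i}$ I use $\E[A_{i,k}^2]=\E[\langle\xi,e_k\rangle_\theta^2](1-e^{-\lambda_k\Delta_n})^2e^{-2\lambda_kt_{i-1}}$, sum the geometric series $\sum_{i=1}^ne^{-2\lambda_kt_{i-1}}\le(1-e^{-2\lambda_k\Delta_n})^{-1}$, and bound $\sup_ke_k^2(y)\le 2e^{-\vartheta_1y/\vartheta_2}$ (uniformly bounded on $[\delta,1-\delta]$) together with $\sup_k\lambda_k\E[\langle\xi,e_k\rangle_\theta^2]<\infty$; this reduces everything to the elementary estimate $\sum_{k\ge1}(\lambda_k^{-1}\wedge\Delta_n)=\mathcal O(\Delta_n^{1/2})$, which follows from $\lambda_k\propto k^2$ by splitting the sum at $k\asymp\Delta_n^{-1/2}$. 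In the centred case the second sum is absent and the proof is complete. In the non-centred case of Assumption~\ref{cond}(i) one has the stronger bound $\sum_k\lambda_k\E[\langle\xi,e_k\rangle_\theta^2]=\E[\|A_\theta^{1/2}\xi\|_\theta^2]<\infty$, and the deterministic cross sum is controlled by $\big((e^{t_iA_\theta}\E\xi)(y)-(e^{t_{i-1}A_\theta}\E\xi)(y)\big)^2$ together with the already handled diagonal; here one establishes the smoothing bound $|\partial_t(e^{tA_\theta}\E\xi)(y)|\le Ct^{-3/4}$ for $t\in(0,1]$ (using that $\E\xi$ lies in the domain of $A_\theta^{1/2}$ and $\lambda_k\asymp k^2$) and then sums $\sum_{i=1}^n\big((e^{t_iA_\theta}\E\xi)(y)-(e^{t_{i-1}A_\theta}\E\xi)(y)\big)^2\le C^2\Delta_n^2\sum_{i\ge2}t_{i-1}^{-3/2}+\mathcal O(\Delta_n^{1/2})=\mathcal O(\Delta_n^{1/2})$. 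I expect this deterministic, non-centred part to be the main obstacle: it needs exactly this quantitative parabolic-smoothing estimate near $t=0$, where $t\mapsto e^{tA_\theta}\E\xi$ is roughest, rather than the softer qualitative fact that $e^{tA_\theta}$ is smoothing.
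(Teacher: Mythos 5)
Your proposal is correct, and for the main term it coincides with the paper's argument: the same decomposition $\Delta_i x_k=A_{i,k}+B_{i,k}+C_{i,k}$ from \eqref{eq:ABC}, the same It\^o-isometry computations, the same algebraic recombination of $\E[B_{i,k}^2]+\E[C_{i,k}^2]$ into the summand of \eqref{eq:Moment}, and the same definition of $r_{n,i}$ as the full $A$-contribution (cf.\ \eqref{eq:remLem1}), bounded over $i$ by the geometric series together with $\sum_k\big(\Delta_n\wedge\lambda_k^{-1}\big)=\mathcal O(\Delta_n^{1/2})$, which is equivalent to the paper's use of \eqref{elin} and the Riemann-sum/integral comparison. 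The only genuine divergence is in the second alternative of Assumption \ref{cond}(i): the paper does not separate the off-diagonal (mean) part at all, but writes $r_{n,i}=\E\big[\big(\sum_k\frac{1-e^{-\lambda_k\Delta_n}}{\lambda_k^{1/2}}e^{-\lambda_k t_{i-1}}e_k(y)\langle A_\theta^{1/2}\xi,e_k\rangle_\theta\big)^2\big]$ and applies Cauchy--Schwarz plus Parseval, which in two lines reduces the whole quadratic form to the same diagonal-type sum $2\E[\|A_\theta^{1/2}\xi\|_\theta^2]\sum_k\lambda_k^{-1}(1-e^{-\lambda_k\Delta_n})^2e^{-2\lambda_k t_{i-1}}$, so the ``non-centred obstacle'' you anticipated disappears. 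Your alternative — factoring the cross terms by Assumption \ref{cond}(ii), identifying them with $\big((e^{t_iA_\theta}\E\xi)(y)-(e^{t_{i-1}A_\theta}\E\xi)(y)\big)^2$ up to the already-controlled diagonal, and invoking the parabolic smoothing bound $|\partial_t(e^{tA_\theta}\E\xi)(y)|\le Ct^{-3/4}$ (valid since $\sum_k\lambda_k(\E\langle\xi,e_k\rangle_\theta)^2<\infty$ and $\lambda_k\asymp k^2$) with a separate $\mathcal O(\Delta_n^{1/4})$ estimate for the $i=1$ increment — is also valid and gives the required $\mathcal O(\Delta_n^{1/2})$ after summation, but it is heavier and uses the independence of the coefficients where the paper's Cauchy--Schwarz bound does not need it; what it buys is an explicit, interpretable control of the mean function through the semigroup, whereas the paper's route stays entirely at the level of coefficient sums and is shorter.
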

\begin{proof}
  Recall $X_t(y)=\sum_{k\ge1} x_k(t)e_k(y)$ and \eqref{eq:ABC}.
  It holds that $\E[A_{i,k}B_{i,k}]=\E[A_{i,k}C_{i,k}]=$\\ $\E[B_{i,k}C_{i,k}]=0$, as well as
	\begin{subequations}
  \begin{align}\label{2ma}
    \E[A_{i,k}^2]=\E\big[\langle \xi,e_k\rangle^2_\theta\big] e^{-2\lambda_k(i-1)\Delta_n}(e^{-\lambda_k \Delta_n}-1)^2\,,
  \end{align}
  \begin{align}\notag
    \E[B_{i,k}^2]&=\int_0^{(i-1)\Delta_n}\sigma^2 e^{-2\lambda_k((i-1)\Delta_n-s)}(e^{-\lambda_k \Delta_n}-1)^2\,\d s\\
    &\label{2mb}=\sigma^2 (e^{-\lambda_k \Delta_n}-1)^2\frac{1-e^{-2\lambda_k(i-1)\Delta_n}}{2\lambda_k}\,,
  \end{align}
  \begin{align}\label{2mc}
    \E[C_{i,k}^2]&=\int_{(i-1)\Delta_n}^{i\Delta_n}\sigma^2 e^{-2\lambda_k(i\Delta_n-s)}\,\d s=\frac{1-e^{-2\lambda_k\Delta_n}}{2\lambda_k}\sigma^2\,.
  \end{align}
	\end{subequations}
  Since $(B_{i,k}+C_{i,k})_{k},k\ge1,$ are independent and centered, we obtain 
  \begin{align*}
   \E[(\Delta_{i}X)^{2}(y)]&=\sum_{k,l\ge1}\E[\Delta_{i}x_{k}\Delta_{i}x_{l}]e_{k}(y)e_l(y)\\
  &=  \sigma^{2}\sum_{k\ge1}\Big(\frac{1-e^{-2\lambda_{k}\Delta_{n}}}{2\lambda_{k}}+\big(1-e^{-\lambda_{k}\Delta_{n}}\big)^{2}\big(\frac{1}{2\lambda_{k}}(1-e^{-2\lambda_{k}t_{i-1}})\big)\Big)e_{k}^{2}(y)+ r_{n,i}\\
 & =  \sigma^{2}\sum_{k\ge1}\Big(\frac{1-e^{-2\lambda_{k}\Delta_{n}}+\big(1-e^{-\lambda_{k}\Delta_{n}}\big)^{2}}{2\lambda_{k}}-\frac{\big(e^{-\lambda_{k}\Delta_{n}}-1\big)^{2}}{2\lambda_{k}}e^{-2\lambda_{k}t_{i-1}}\Big)e_{k}^{2}(y)+ r_{n,i}\\
 & =  \sigma^{2}\sum_{k\ge1}\frac{1-e^{-\lambda_{k}\Delta_{n}}}{\lambda_{k}}\Big(1-\frac{1-e^{-\lambda_{k}\Delta_{n}}}{2}e^{-2\lambda_{k}t_{i-1}}\Big)e_{k}^{2}(y)+r_{n,i}\,,
  \end{align*}
  where the remainder $r_{n,i}$ hinges on $\xi$ and $i$:
  \begin{align}
  r_{n,i}  =\sum_{k,l\ge1}\big(1-e^{-\lambda_{k}\Delta_{n}}\big)\big(1-e^{-\lambda_{l}\Delta_{n}}\big)e^{-(\lambda_{k}+\lambda_{l})t_{i-1}}\E[\langle \xi,e_{k}\rangle_{\theta}\langle \xi,e_{l}\rangle_{\theta}]e_k(y)e_l(y)\,.\label{eq:remLem1}
  \end{align}
  If on Assumption~\ref{cond} $\E[\langle \xi,e_{k}\rangle_{\theta}]=0$ and $\E[\langle \xi,e_{k}\rangle_{\theta}^2]\le C/\lambda_k$	with some $C<\infty$, then we have that
	\[ r_{n,i}\le C\sum_{k\ge 1}\frac{\big(1-e^{-\lambda_{k}\Delta_{n}}\big)^2}{\lambda_k}e^{-2\lambda_{k}t_{i-1}}e_k^2(y)\]
	by independence of the coefficients. For the alternative condition from Assumption~\ref{cond}, we use that
  $A_\theta$ is self-adjoint on $H_{\theta}$, such that $-\lambda_k^{1/2}\langle \xi,e_{k}\rangle_{\theta}=\langle A_\theta^{1/2} \xi, e_{k}\rangle_{\theta}$ and thus
  \[
    r_{n,i}=\E\Big[\Big(\sum_{k\ge1}\frac{1-e^{-\lambda_{k}\Delta_{n}}}{\lambda_k^{1/2}}e^{-\lambda_{k}t_{i-1}}e_k(y)\langle A^{1/2}_\theta \xi,e_{k}\rangle_{\theta}\Big)^2\Big].
  \]
  The Cauchy-Schwarz inequality and Parseval's identity yield that
  \begin{align*}  
    r_{n,i}&\le\sum_{k\ge1}\frac{(1-e^{-\lambda_{k}\Delta_{n}})^2}{\lambda_k}e^{-2\lambda_{k}t_{i-1}}e^2_k(y) \E\Big[\sum_{k\ge1}\langle A^{1/2}_\theta \xi,e_{k}\rangle_{\theta}^2\Big]\\
    &\le  2\E\big[\|A^{1/2}_\theta \xi\|_\theta^2\big] \sum_{k\ge1}\frac{(1-e^{-\lambda_{k}\Delta_{n}})^2}{\lambda_k}e^{-2\lambda_{k}(i-1)\Delta_n}\,,
  \end{align*}
  where $\E\big[\|A^{1/2}_\theta \xi\|_\theta^2\big]$ is finite by assumption. We shall often use the elementary inequality
	\begin{align}\label{elin}\frac{(1-p)(1-q)}{1-pq}\le  1-p\,,~~\text{for all}~~0\le p,q<1\,.\end{align}
	Applying this inequality and a Riemann sum approximation yields that
  \begin{align*}
    \sum_{k\ge1}\frac{(1-e^{-\lambda_{k}\Delta_{n}})^2}{\lambda_k}\sum_{i=1}^ne^{-2\lambda_{k}(i-1)\Delta_n}
   &\le\sum_{k\ge1}\frac{(1-e^{-\lambda_{k}\Delta_{n}})^2}{\lambda_k(1-e^{-2\lambda_k\Delta_n})}\\
   \le\Delta_n\sum_{k\ge1}\frac{(1-e^{-\lambda_{k}\Delta_{n}})}{\lambda_k\Delta_n}&\le \Delta_n^{1/2}\int_0^\infty\frac{1-e^{-z^2}}{z^2}\d z+\KLEINO(\Delta_n^{1/2})\,,
  \end{align*}
  such that
  \begin{equation*}
  \sum_{i=1}^nr_{n,i}=\mathcal{O}(\Delta_n^{1/2})\,.\hfill\qedhere
  \end{equation*}
\end{proof}
In order to evaluate the sums in the previous lemma, we apply the following result:
\begin{lem}\label{lem:riemann}
 If $f\colon[0,\infty)\to\R$ is twice continuously differentiable such that $\|f(x^2)\|_{L^1([0,\infty))},\|(1\vee x)f'(x^2)\|_{L^1([0,\infty))},\|(1\vee x^ 2)f''(x^2)\|_{L^1([0,\infty))}\le C$ and $|f(0)|\le C$ for some $C>0$, then:
 \begin{enumerate}
  \item[(i)]  $\displaystyle\sqrt\Delta_n\sum_{k\ge1} f(\lambda_k\Delta_n)=\int_0^\infty f(\pi^2\theta_2z^2)\,\d z-\frac{\sqrt\Delta_n}{2}f(0)+\mathcal O(\Delta_n)$ and
  \item[(ii)]  $\displaystyle\sqrt\Delta_n\sum_{k\ge1} f(\lambda_k\Delta_n)\cos(2\pi ky)=-\frac{\sqrt\Delta_n}{2}f(0)+\mathcal{O}\big((\delta^{-2}\Delta_{n})\wedge(\delta^{-1}\Delta_n^{1/2})\big)$ for any $y\in[\delta,1-\delta]$,
 \end{enumerate}
 with $\delta$ from Assumption \ref{Obs}. The constant in the $\mathcal O$-terms only depends on $C$.
\end{lem}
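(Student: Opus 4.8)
The plan is to view both sums as perturbations of Riemann sums for $v\mapsto f\big(\Delta_n(\pi^2\theta_2 v^2+c_\theta)\big)$, where $c_\theta:=\theta_1^2/(4\theta_2)-\theta_0$, so that $\lambda_k=\pi^2\theta_2 k^2+c_\theta$. Set $F(v):=f\big(\Delta_n(\pi^2\theta_2 v^2+c_\theta)\big)$, so that $f(\lambda_k\Delta_n)=F(k)$ and, substituting $z=\sqrt{\Delta_n}\,v$, $\sqrt{\Delta_n}\int_0^\infty F(v)\,\d v=\int_0^\infty f(\pi^2\theta_2 z^2+c_\theta\Delta_n)\,\d z$. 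The three integrability hypotheses are precisely what makes, uniformly in $n$, the quantities $\int_0^\infty|f(\pi^2\theta_2 z^2)|\,\d z$, $\int_0^\infty|f'(t)|\,\d t=\int_0^\infty 2x|f'(x^2)|\,\d x$ and $\int_0^\infty|F''(v)|\,\d v$ of order $\mathcal O(1)$, $\mathcal O(1)$ and $\mathcal O(\Delta_n^{1/2})$ respectively; in particular the shift $c_\theta\Delta_n$ will only ever cost $\mathcal O(\Delta_n)$, by a first-order Taylor expansion in the argument of $f$ together with the bounds on $\int|f'|$ and $\int|f''|$. The decisive gain comes from the change of variables $x=\pi\sqrt{\theta_2}\sqrt{\Delta_n}\,v$: it turns the factor $\Delta_n^2$ multiplying $v^2 f''$ in $F''$ into a gain of order $\Delta_n^{1/2}$, which is the whole reason the assumptions are stated with the weights $(1\vee x)$ and $(1\vee x^2)$ rather than plain $L^1$-bounds on $f,f',f''$.

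For (i), I would compare $\sum_{k\ge1}F(k)$ with $\int_{1/2}^\infty F$ via the midpoint quadrature bound $\big|F(k)-\int_{k-1/2}^{k+1/2}F(v)\,\d v\big|\le\tfrac1{24}\int_{k-1/2}^{k+1/2}|F''(v)|\,\d v$; summing over the tiling intervals gives $\big|\sum_{k\ge1}F(k)-\int_{1/2}^\infty F\big|\le\tfrac1{24}\int_0^\infty|F''|=\mathcal O(\Delta_n^{1/2})$, where $F''(v)=(2\pi^2\theta_2\Delta_n v)^2 f''(\Delta_n(\pi^2\theta_2 v^2+c_\theta))+2\pi^2\theta_2\Delta_n\,f'(\Delta_n(\pi^2\theta_2 v^2+c_\theta))$ and the order follows from the substitution above using $\int(1\vee x^2)|f''(x^2)|\,\d x\le C$ and $\int(1\vee x)|f'(x^2)|\,\d x\le C$. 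On $[0,1/2]$ the argument of $f$ in $F$ is $\mathcal O(\Delta_n)$ and $f'$ is bounded near $0$ (a bound which, like $|f(0)|\le C$, follows from the hypotheses), so $\int_0^{1/2}F(v)\,\d v=\tfrac12 f(0)+\mathcal O(\Delta_n)$; combining $\int_{1/2}^\infty F=\int_0^\infty F-\int_0^{1/2}F$, multiplying by $\sqrt{\Delta_n}$, and absorbing the $c_\theta\Delta_n$-shift into $\mathcal O(\Delta_n)$ yields (i).

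For (ii), I would apply summation by parts using $2\sin(\pi y)\cos(2\pi ky)=\sin((2k+1)\pi y)-\sin((2k-1)\pi y)$, which after re-indexing gives
\[
  \sum_{k\ge1}f(\lambda_k\Delta_n)\cos(2\pi ky)=-\tfrac12 f(\lambda_1\Delta_n)+\frac1{2\sin(\pi y)}\sum_{k\ge1}\big(F(k)-F(k+1)\big)\sin((2k+1)\pi y).
\]
The first term equals $-\tfrac12 f(0)+\mathcal O(\Delta_n)$ since $\lambda_1\Delta_n=\mathcal O(\Delta_n)$ and $f$ is Lipschitz near $0$, producing the announced leading term after multiplication by $\sqrt{\Delta_n}$. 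For the residual sum there are two estimates. Crudely, $\sum_{k\ge1}|F(k)-F(k+1)|\le\int_{\lambda_1\Delta_n}^\infty|f'(t)|\,\d t\le\int_0^\infty 2x|f'(x^2)|\,\d x=\mathcal O(1)$, and since $|\sin(\pi y)|\ge\sin(\pi\delta)\gtrsim\delta$ on $[\delta,1-\delta]$, this contributes $\mathcal O(\delta^{-1}\Delta_n^{1/2})$ after the prefactor $\sqrt{\Delta_n}$. More finely, a second summation by parts via $2\sin(\pi y)\sin((2k+1)\pi y)=\cos(2\pi ky)-\cos((2k+2)\pi y)$ reduces the residual, up to another factor $1/(2\sin(\pi y))$, to a telescoped boundary term of size $|F(1)-F(2)|=\mathcal O(\Delta_n)$ plus $\sum_{k\ge2}|F(k-1)-2F(k)+F(k+1)|$; bounding each second central difference by $\int_{k-1}^{k+1}|F''|$ gives $\sum_{k\ge2}|F(k-1)-2F(k)+F(k+1)|\le 2\int_0^\infty|F''|=\mathcal O(\Delta_n^{1/2})$, hence a contribution $\mathcal O(\delta^{-2}\Delta_n)$ after the prefactor. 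Taking the minimum of the two bounds, and noting that the $\mathcal O(\Delta_n^{3/2})$ left over from the leading term is dominated by both, gives (ii).

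The main obstacle is the sharp bound $\int_0^\infty|F''(v)|\,\d v=\mathcal O(\Delta_n^{1/2})$, which is the one place where all three integrability assumptions enter simultaneously and which explains their precise form; a secondary nuisance is bookkeeping, namely checking that all $\mathcal O$-constants depend only on $C$ (and on the fixed $\theta_2$, and for (ii) separately on $\delta$), that the summation-by-parts rearrangements are legitimate and their boundary terms genuinely vanish — which needs $f(t)\to0$ as $t\to\infty$, itself a consequence of $f(x^2)\in L^1$ and $2xf'(x^2)\in L^1$ — and that the first-order shift $c_\theta\Delta_n$ inside $\lambda_k\Delta_n$ never costs more than $\mathcal O(\Delta_n)$.
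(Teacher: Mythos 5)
Your part (i) is essentially the paper's argument: a midpoint Riemann approximation whose error is controlled by $\int_0^\infty|F''|$ (the paper phrases this as a second-order Taylor expansion at the midpoints, where the linear term cancels by symmetry), extraction of the boundary contribution $\tfrac{\sqrt{\Delta_n}}{2}f(0)$ from the interval $[0,\sqrt{\Delta_n}/2]$, and absorption of the eigenvalue shift $(\theta_1^2/(4\theta_2)-\theta_0)\Delta_n$ via a first-order Taylor step using $f'(x^2)\in L^1$. Part (ii), however, is a genuinely different route. The paper goes through Fourier analysis: it writes $\cos(2\pi ky)$ as the real part of a complex exponential, represents the sum as $\Re\big(\tfrac{\pi y}{\sin(\pi y)}\F[\cdot](-2\pi y\Delta_n^{-1/2})\big)$ for a step-function approximation of $\tilde f(z)=f(\pi^2\theta_2 z^2)$, splits into a quadrature-error term $T_1$ and a term $T_2=\F[\tilde f\1_{(a_0,\infty)}]$ whose size is read off from the decay $|z\F[\tilde f](z)|,|z^2\F[\tilde f](z)|=\mathcal O(1)$ coming from $\tilde f',\tilde f''\in L^1$, and recovers the leading term $-\tfrac12\sqrt{\Delta_n}f(0)$ by integration by parts on the piece near zero. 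You instead perform two rounds of Abel summation with the telescoping identities $2\sin(\pi y)\cos(2\pi ky)=\sin((2k+1)\pi y)-\sin((2k-1)\pi y)$ and $2\sin(\pi y)\sin((2k+1)\pi y)=\cos(2\pi ky)-\cos(2(k+1)\pi y)$: the first pass produces the leading term $-\tfrac12 F(1)$ directly and a residual controlled either crudely by $\sum_k|F(k)-F(k+1)|\le\int|f'|=\mathcal O(1)$ (giving $\delta^{-1}\Delta_n^{1/2}$) or, after the second pass, by second central differences $\le 2\int|F''|=\mathcal O(\Delta_n^{1/2})$ (giving $\delta^{-2}\Delta_n$); taking the minimum reproduces the stated two-regime bound. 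Both proofs are correct and rest on the same quantitative heart, namely $\int_0^\infty|F''|=\mathcal O(\Delta_n^{1/2})$ after the rescaling, which you identify; your version is more elementary (no Fourier transform) and extracts the boundary term more transparently, while the paper's Fourier-decay argument packages the oscillatory cancellation in one stroke. One small bookkeeping correction: the hypotheses do not give a sup-bound on $f'$ near $0$ with a constant depending only on $C$, so "Lipschitz near $0$" and "$|F(1)-F(2)|=\mathcal O(\Delta_n)$" are not available in that uniform sense; but the $L^1$ bounds yield $|f(t)-f(0)|\le 2C\sqrt t$ and hence errors of order $\sqrt{\Delta_n}$ at those spots, which after the outer factor $\sqrt{\Delta_n}$ (and, for the $F(1)-F(2)$ term, the $\delta^{-2}$ prefactor) still fall within the claimed $\mathcal O(\Delta_n)$ and $\mathcal O(\delta^{-2}\Delta_n)$ remainders, so the conclusion and the dependence of constants only on $C$ are unaffected.
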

\begin{proof}
  We perform a substitution with $z^2=k^2\Delta_n$ to approximate the sum by an integral, such that \[\lambda_k \Delta_n=z^2\,\pi^2\theta_2+\Delta_n\Big(\frac{\vartheta_1^2}{4\vartheta_2}-\vartheta_0\Big)\,.\] 
  A Taylor expansion of $f$ and using that $f'(x^2)\in L^1([0,\infty))$ shows that the remainder of setting $\lambda_k\Delta_n\approx \pi^2 z^2 \vartheta_2$ is of order $\Delta_n$. A Riemann midpoint approximation yields for $\tilde f(z):=f(\pi^2\theta_2z^2)$, the grid $a_{k}:=\Delta_{n}^{1/2}(k+1/2),k\ge0,$ and intermediate points $\xi_{k}\in[a_{k-1},a_{k}]$ that
  \begin{align*}
    \sqrt\Delta_n\sum_{k\ge1} f(\lambda_k\Delta_n)-&\int_{\frac{\sqrt{\Delta_{n}}}{2}}^\infty f(\pi^2\theta_2z^2)\,\d z
     = \sum_{k\ge1}\int_{a_{k-1}}^{a_{k}}\Big(\tilde f(\sqrt{\Delta_{n}}k)-\tilde f(z)\Big)\, \d z+\mathcal{O}(\Delta_{n})\\
    & = -\sum_{k\ge1}\int_{a_{k-1}}^{a_{k}}\Big(\big(z-k\sqrt{\Delta_{n}}\big)\tilde f'(\sqrt{\Delta_n}k)+\frac{1}{2}\big(z-k\sqrt{\Delta_{n}}\big)^{2}\tilde f''(\xi_{k})\Big)\, \d z+\mathcal{O}(\Delta_{n})\\
    & = \mathcal{O}\Big(\Delta_{n}\int_{0}^{\infty}|\tilde f''(z)|\, \d z\Big)+\mathcal{O}(\Delta_{n}).
  \end{align*}
  We have used a second-order Taylor expansion where the first-order term vanishes by symmetry.
  Since $\tilde f''\in L^{1}([0,\infty))$, we conclude that
  \begin{align*}
    \sqrt\Delta_n\sum_{k\ge1} f(\lambda_k\Delta_n)&=\int_{0}^{\infty}f(\pi^2\theta_2z^2)\, \d z-\int_{0}^{\frac{\sqrt{\Delta_{n}}}{2}}f(\pi^2\theta_2z^2)\, \d z+\mathcal{O}(\Delta_{n}).    
  \end{align*}
  The subtracted integral over the asymptotically small vicinity close to zero can be approximated by\\ $f(0)\sqrt{\Delta_{n}}/2+\mathcal{O}(\Delta_{n})$ concluding $(i)$. To show $(ii)$, we rewrite with $\Re$ denoting the real part of complex numbers and $\iu$ denoting the imaginary unit
  \begin{align*}
  \sqrt\Delta_n\sum_{k\ge1} f(\lambda_k\Delta_n)\cos(2\pi ky)&=\sqrt{\Delta_n}\sum_{k\ge1}\tilde f(\Delta^{1/2}_nk)\cos(2\pi ky)+\mathcal O(\Delta_n)\\
  &=\Re\Big(\sqrt{\Delta_{n}}\sum_{k\ge1}\tilde f(\Delta^{1/2}_nk)e^{2\pi \mathrm ik y}\Big)+\mathcal O(\Delta_n)\,.
  \end{align*}
  Using $(a_{k})_{k\ge0}$ from above, we derive that 
  \begin{align*}
  \int_{a_{k-1}}^{a_{k}}e^{2\pi \iu uy/\sqrt{\Delta_{n}}}\,\d u & =\frac{\sqrt{\Delta_{n}}}{2\pi \iu y}\big(e^{2\pi \iu a_{k}y/\sqrt{\Delta_{n}}}-e^{2\pi \iu a_{k-1}y/\sqrt{\Delta_{n}}}\big)\\
  & =\frac{\sqrt{\Delta_{n}}}{2\pi \iu y}\big(e^{\pi \iu y}-e^{-\pi \iu y}\big)e^{2\pi \iu ky}
  =\frac{\sin(\pi y)}{\pi y}\sqrt{\Delta_{n}}e^{2\pi \iu ky}.
  \end{align*}
  For a real-valued function $f\in L^1(\R)$, we denote by $\F[f]:\R\to\mathds{C}$ its Fourier transform $\F[f](x)=\int f(t)e^{-\iu x t}\,\d t$. In terms of the Fourier transform $\F$, we obtain
  \begin{align*}
  \sqrt\Delta_n\sum_{k\ge1} f(\lambda_k\Delta_n)\cos(2\pi ky)& =\Re\Big(\frac{\pi y}{\sin(\pi y)}\sum_{k\ge1}\tilde f(\Delta^{1/2}_nk)\int_{a_{k-1}}^{a_{k}}e^{2\pi \iu uy/\sqrt{\Delta_{n}}}\,\d u\Big)\\
  & =\Re\Big(\frac{\pi y}{\sin(\pi y)}\F\Big[\sum_{k\ge1}\tilde f(\Delta^{1/2}_nk)\1_{(a_{k-1},a_{k}]}\Big](-2\pi y\Delta_{n}^{-1/2})\Big)\\
  & =T_1+T_2,
  \end{align*}
\vspace*{-1cm}

  \begin{align*}
    \mbox{where}~~~~T_{1}:= & \Re\Big(\frac{\pi y}{\sin(\pi y)}\F\Big[\sum_{k\ge1}\tilde f(\Delta^{1/2}_nk)\1_{(a_{k-1},a_{k}]}-\tilde f\1_{(a_{0},\infty)}\Big](-2\pi y\Delta_{n}^{-1/2})\Big),\\
  T_{2}:= & \Re\Big(\frac{\pi y}{\sin(\pi y)}\F\big[\tilde f\1_{(a_{0},\infty)}\big](-2\pi y\Delta_{n}^{-1/2})\Big).
  \end{align*}
  Based on that decomposition, we first verify $\sqrt\Delta_n\sum_{k\ge1} f(\lambda_k\Delta_n)\cos(2\pi ky)=\mathcal O(\delta^{-1}\sqrt{\Delta_n})$. To bound $T_1$, a Riemann approximation yields
  \begin{align*}
  |T_1| & \le\frac{\pi y}{\sin(\pi y)}\Big\|\F\Big[\sum_{k\ge1}\tilde f(\Delta_{n}^{1/2}k)\1_{(a_{k-1},a_{k}]}-\tilde f\1_{(a_{0},\infty)}\Big]\Big\|_{\infty}\\
  & \le\frac{\pi y}{\sin(\pi y)}\Big\|\sum_{k\ge1}\tilde f(\Delta_{n}^{1/2}k)\1_{(a_{k-1},a_{k}]}-\tilde f\1_{(a_{0},\infty)}\Big\|_{L^{1}}=\mathcal{O}(\delta^{-1}\Delta^{1/2}_{n}),
  \end{align*}
  where we also have used that $y$ is bounded away from 1 by $\delta$.
  
  For the second term $T_{2}$, we will use the decay of the Fourier
  transformation $\F [\tilde f]$. Since $\tilde f',\tilde f''$\\ $\in L^{1}([0,\infty))$,
  we have $|z\F [\tilde f](z)|=\mathcal{O}(1)$ and $|z^{2}\F [\tilde f](z)|=\mathcal{O}(1)$. Hence, for any $y\ge \delta>0$:
  \[
  |\F [\tilde f](-2\pi y\Delta_{n}^{-1/2})|=\mathcal{O}\Big(\frac{\sqrt{\Delta_n}}{\delta}\wedge\frac{\Delta_n}{\delta^2}\Big)\,.
  \]
  This implies that
  \begin{align}
  T_2 & =-\Re\Big(\frac{\pi y}{\sin(\pi y)}\,\F[\tilde f\1_{(0,a_{0}]}](-2\pi y\Delta_{n}^{-1/2})\Big)+\mathcal{O}\Big(\frac{\sqrt{\Delta_n}}{\delta}\wedge\frac{\Delta_n}{\delta^2}\Big)\notag\\
  & =-\frac{\pi y}{\sin(\pi y)}\int_{0}^{\sqrt{\Delta_{n}}/2}\cos(2\pi yu\Delta_{n}^{-1/2})\tilde f(u)\,\d u+\mathcal{O}\Big(\frac{\sqrt{\Delta_n}}{\delta}\wedge\frac{\Delta_n}{\delta^2}\Big)\notag\\
  & =-\frac{\Delta_{n}^{1/2}\pi y}{\sin(\pi y)}\int_{0}^{1/2}\cos(2\pi yu)\tilde f(\Delta_{n}^{1/2}u)\,\d u+\mathcal{O}\Big(\frac{\sqrt{\Delta_n}}{\delta}\wedge\frac{\Delta_n}{\delta^2}\Big)\,.\label{eq:T2Riemann}
  \end{align}
  In particular, we obtain $|T_2|=\mathcal O(\delta^{-1}\sqrt \Delta)$. Noting that the above estimates for $T_1,T_2$ apply to the imaginary part, too, we conclude
  \begin{equation}\label{eq:auxRiemann}
    \sqrt\Delta_n\sum_{k\ge1} f(\lambda_k\Delta_n)g(2\pi ky)=\mathcal O(\delta^{-1}\sqrt{\Delta_n})\quad\text{for }g=\cos\text{ or }g=\sin.
  \end{equation}
  To prove $(ii)$, we tighten our bounds for $T_1,T_2$.
  For $T_1$ we use a second-order Taylor expansion as for $(i)$:
  \begin{align*}
    |T_1|=& \Big|\Re\Big(\frac{\pi y}{\sin(\pi y)}\sum_{k\ge1}\int_{a_{k-1}}^{a_k}\Big(\tilde f(\Delta^{1/2}_nk)-\tilde f(z)\Big)e^{2\pi\iu zy\Delta_{n}^{-1/2}}\d z\Big)\Big|\\
    =&\frac{\pi y}{\sin(\pi y)}\Big|\Re\Big(-\sum_{k\ge1}\int_{a_{k-1}}^{a_k}\Big((z-\Delta_n^{1/2}k)\tilde f'(\Delta^{1/2}_nk)+\frac12(z-\Delta_n^{1/2}k)^2\tilde f''(\xi_k)\Big)e^{2\pi\iu zy\Delta_{n}^{-1/2}}\d z\Big)\Big|\\
    \le&\frac{\pi y}{\sin(\pi y)}\Big|\sum_{k\ge1}\int_{a_{k-1}}^{a_k}(z-\Delta_n^{1/2}k)\tilde f'(\Delta^{1/2}_nk)\Re(e^{2\pi \iu zy\Delta_n^{-1/2}})\d z\Big|\\
    &\quad +\frac{\pi y}{2\sin(\pi y)}\Big(\sum_{k\ge1}\int_{a_{k-1}}^{a_k}(z-\Delta_n^{1/2}k)^2|\tilde f''(\xi_k)|\d z\Big)\\
    =&\frac{\pi y}{\sin(\pi y)}\Big|\sum_{k\ge1}\tilde f'(\Delta^{1/2}_nk)\int_{-\sqrt{\Delta_n}/2}^{\sqrt{\Delta_n}/2}u\cos\big(2\pi (u+\Delta_n^{1/2}k)y\Delta_n^{-1/2}\big)\d u\Big|+\mathcal{O}\Big(\delta^{-1}\Delta_{n}\int_{0}^{\infty}|\tilde f''(z)|\, \d z\Big)\\
    =&\frac{\pi y}{\sin(\pi y)}\Big|\frac{\Delta_n(\pi y\cos(\pi y)-\sin(\pi y))}{2\pi^2y^2}\sum_{k\ge1}\tilde f'(\Delta^{1/2}_nk)\sin(2\pi ky)\Big|+\mathcal{O}\Big(\delta^{-1}\Delta_{n}\int_{0}^{\infty}|\tilde f''(z)|\, \d z\Big),
  \end{align*}
  where for the last equality, we compute the elementary integral. 
	The sum in the last line is bounded similarly as in \eqref{eq:auxRiemann}. Hence, $|T_1|=\mathcal O(\frac{\sqrt{\Delta_n}}{\delta}\wedge\frac{\Delta_n}{\delta^2})$. For the second term $T_{2}$, we apply integration by parts to \eqref{eq:T2Riemann} to obtain
  \begin{align*}
  T_2 & =-\frac{\Delta_{n}^{1/2}\pi y}{\sin(\pi y)}\Big(\frac{\sin(\pi y)}{2\pi y}\tilde f(\Delta_{n}^{1/2}/2)+\Delta_{n}^{1/2}\int_{0}^{1/2}\frac{\sin(2\pi yu)}{2\pi y}\tilde f'(\Delta_{n}^{1/2}u)\,\d u\Big)+\mathcal{O}\Big(\frac{\sqrt{\Delta_n}}{\delta}\wedge\frac{\Delta_n}{\delta^2}\Big)\\
  & =-\frac12\Delta_{n}^{1/2}\tilde f(\Delta_{n}^{1/2}/2)+\mathcal{O}\Big(\frac{\sqrt{\Delta_n}}{\delta}\wedge\frac{\Delta_n}{\delta^2}\Big)\\
  & =-\frac12\Delta_{n}^{1/2}\tilde f(0)+\mathcal{O}\Big(\frac{\sqrt{\Delta_n}}{\delta}\wedge\frac{\Delta_n}{\delta^2}\Big).
  \end{align*}
  Combining the estimates for $T_1$ and $T_2$ yields $(ii)$.
\end{proof}
\begin{lem}\label{lem2}
  On Assumptions \ref{Obs} and \ref{cond} it holds uniformly for all $y\in[\delta,1-\delta]$, $\delta>0$, that
  \begin{align*}
    \E[(\Delta_i X)^2(y)]=&\sqrt{\Delta_n}\frac{\sigma^2e^{-y\,\vartheta_1/\vartheta_2}}{\pi\sqrt{\vartheta_2}} \int_0^\infty \frac{1-e^{-z^2}}{z^2}\Big(1-\frac{1-e^{-z^2}}{2}e^{-2z^2(i-1)}\Big)\d z
    +r_{n,i}+\mathcal O(\delta^{-2}\Delta_n^{3/2}),
  \end{align*}
  $i\in\{1,\ldots,n\}$, where $r_{n,i}$ is the sequence from Lemma~\ref{lem1}.
\end{lem}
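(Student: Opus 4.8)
The plan is to substitute the explicit eigenfunctions into the series representation of Lemma~\ref{lem1} and to evaluate the resulting sums with Lemma~\ref{lem:riemann}. Using $e_k^2(y)=2\sin^2(\pi k y)\,e^{-y\vartheta_1/\vartheta_2}=\big(1-\cos(2\pi k y)\big)e^{-y\vartheta_1/\vartheta_2}$ and writing $x=\lambda_k\Delta_n$, so that $\lambda_k t_{i-1}=x(i-1)$ and $\tfrac{1-e^{-\lambda_k\Delta_n}}{\lambda_k}=\Delta_n\tfrac{1-e^{-x}}{x}$, the identity of Lemma~\ref{lem1} becomes
\[
  \E[(\Delta_iX)^2(y)]=\sigma^2 e^{-y\vartheta_1/\vartheta_2}\,\Delta_n\Big(\sum_{k\ge1}f(\lambda_k\Delta_n)-\sum_{k\ge1}f(\lambda_k\Delta_n)\cos(2\pi k y)\Big)+r_{n,i},
\]
with $f(x):=\frac{1-e^{-x}}{x}\Big(1-\frac{1-e^{-x}}{2}e^{-2x(i-1)}\Big)$ and $f(0)=1$, and the remainder $r_{n,i}$ carried over unchanged from Lemma~\ref{lem1}. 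The function $f$ is smooth on $[0,\infty)$ because $x\mapsto(1-e^{-x})/x=\int_0^1 e^{-xt}\,\d t$ is analytic, so only the two sums need attention.

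The main obstacle is to check that $f$ meets the hypotheses of Lemma~\ref{lem:riemann} with a constant $C$ that is \emph{uniform in $i\in\{1,\dots,n\}$}, since $f$ depends on $i$ through the factor $e^{-2x(i-1)}$. Differentiating $f(x^2)$ once or twice produces, besides $i$-independent contributions, terms of the form $x^{j}(i-1)^{\ell}(1-e^{-x^2})e^{-2x^2(i-1)}$ with $j,\ell\le2$, and these have to be bounded and integrable uniformly in $i$. I would control them via the elementary estimate $\sup_{t\ge0}t^{\ell}e^{-2t}<\infty$ together with $1-e^{-x^2}\le 1\wedge x^2$, and with the facts that $(1-e^{-x})/x$, $(1-e^{-x})/2$ and their first two derivatives are bounded on $[0,\infty)$ while $(1-e^{-x})/x$ and its derivatives decay at least like $x^{-1}$ as $x\to\infty$; a short case split near $x=0$ and for $x\to\infty$ then gives $\|f(x^2)\|_{L^1}$, $\|(1\vee x)f'(x^2)\|_{L^1}$, $\|(1\vee x^2)f''(x^2)\|_{L^1}$ and $|f(0)|$ bounded uniformly in $i$.

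Granting this, Lemma~\ref{lem:riemann}(i) yields $\Delta_n\sum_{k\ge1}f(\lambda_k\Delta_n)=\sqrt{\Delta_n}\int_0^\infty f(\pi^2\vartheta_2 z^2)\,\d z-\tfrac{\Delta_n}{2}f(0)+\mathcal O(\Delta_n^{3/2})$, and Lemma~\ref{lem:riemann}(ii) yields $\Delta_n\sum_{k\ge1}f(\lambda_k\Delta_n)\cos(2\pi k y)=-\tfrac{\Delta_n}{2}f(0)+\mathcal O\big((\delta^{-2}\Delta_n^{3/2})\wedge(\delta^{-1}\Delta_n)\big)$ uniformly in $y\in[\delta,1-\delta]$. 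Upon subtraction the two $\tfrac{\Delta_n}{2}f(0)$ terms cancel exactly, which is precisely why no $\Delta_n$-order term survives in the expansion. The substitution $w=\pi\sqrt{\vartheta_2}\,z$ turns the remaining integral into $\int_0^\infty f(\pi^2\vartheta_2 z^2)\,\d z=\frac{1}{\pi\sqrt{\vartheta_2}}\int_0^\infty\frac{1-e^{-w^2}}{w^2}\big(1-\frac{1-e^{-w^2}}{2}e^{-2w^2(i-1)}\big)\,\d w$; collecting the errors $\mathcal O(\Delta_n^{3/2})$ from part (i) and $\mathcal O\big((\delta^{-2}\Delta_n^{3/2})\wedge(\delta^{-1}\Delta_n)\big)$ from part (ii) — which, for $n$ large enough that $\Delta_n^{1/2}\le\delta$ and since $\delta<1/2$, is $\mathcal O(\delta^{-2}\Delta_n^{3/2})$ — and multiplying through by $\sigma^2 e^{-y\vartheta_1/\vartheta_2}$ gives the assertion, uniformly in $i\in\{1,\dots,n\}$ and $y\in[\delta,1-\delta]$.
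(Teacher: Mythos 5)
Your proposal is correct and follows essentially the same route as the paper: insert the eigenfunctions, use $2\sin^2(\pi ky)=1-\cos(2\pi ky)$, and evaluate the resulting sums with Lemma~\ref{lem:riemann}(i) and (ii), with the $\tfrac{\sqrt{\Delta_n}}{2}f(0)$ boundary terms cancelling between the plain and the cosine-weighted sums. The only (cosmetic) difference is that the paper splits the plain sum into the two pieces $I_1$ and $I_2(i)$ and applies part (i) to each, while you apply it once to the combined $i$-dependent function, for which your uniform-in-$i$ verification of the integrability hypotheses is the right supporting argument.
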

\begin{proof}
  Inserting the eigenfunctions $e_k(y)$ from \eqref{eq:eK} and recalling that $t_{i-1}=(i-1)\Delta_n$, Lemma \ref{lem1} gives for $i\in\{1,\ldots,n\}$ 
    \begin{align*}
      &\E[(\Delta_i X)^2(y)]=2\sigma^{2}\sum_{k\ge1}\frac{1-e^{-\lambda_{k}\Delta_{n}}}{\lambda_{k}}\Big(1-\frac{1-e^{-\lambda_{k}\Delta_{n}}}{2}e^{-2\lambda_{k}(i-1)\Delta_n}\Big)\sin^2(\pi ky)e^{-y\,\vartheta_1/\vartheta_2}+r_{n,i}\,.
  \end{align*}
  Applying the identity $\sin^2(z)=1/2(1-\cos(2z))$, we decompose the sum into three parts, which will be bounded separately:
  \begin{align}
    \E[(\Delta_i X)^2(y)]=\sigma^2\sqrt\Delta_n(I_1-I_2(i)+R(i))e^{-y\,\vartheta_1/\vartheta_2}+r_{n,i}\,,\label{eq:DecompMom}
  \end{align}
	\vspace*{-1cm}
	
  \begin{align*}
    \mbox{where}~~~~I_1&:=\sqrt{\Delta_{n}}\sum_{k\ge1}\frac{1-e^{-\lambda_{k}\Delta_{n}}}{\lambda_{k}\Delta_{n}},~I_2(i):=\sqrt{\Delta_{n}}\sum_{k\ge1}\frac{\big(1-e^{-\lambda_{k}\Delta_{n}}\big)^{2}}{2\lambda_{k}\Delta_{n}}e^{-2\lambda_{k}(i-1)\Delta_n},\\
    R(i)&:=-\sqrt{\Delta_{n}}\sum_{k\ge1}\frac{1-e^{-\lambda_{k}\Delta_{n}}}{\Delta_n\lambda_{k}}\Big(1-\frac{1-e^{-\lambda_{k}\Delta_{n}}}{2}e^{-2\lambda_{k}(i-1)\Delta_n}\Big)\cos(2\pi ky)\,.
  \end{align*}
  The terms $I_1$ and $I_2(i)$ can be approximated by integrals, determining the size of the expected squared increments. The remainder terms $R(i)$ from approximating $\sin^2$ turn out to be negligible.

  For $I_1$ we apply Lemma~\ref{lem:riemann}$(i)$ to the function $f:[0,\infty)\to\R,f(x)\mapsto (1-e^{-x})/x$ for $x>0$ and $f(0)=1$, and obtain
  \begin{align}
    I_1=\int_{0}^{\infty}\frac{(1-e^{-\pi^2\vartheta_2 z^{2}})}{\pi^2\vartheta_2 z^2}\, \d z-\frac{\sqrt{\Delta_{n}}}{{2}}+\mathcal{O}(\Delta_{n})\,.\label{eq:expT1}
  \end{align}
  For the second $i$-dependent term $I_2(i)$ in \eqref{eq:DecompMom}, we apply Lemma~\ref{lem:riemann}$(i)$ to the functions 
  \[
    g_i(x)=\begin{cases}\frac{(1-e^{-x})^2e^{-2x(i-1)}}{2x}\,,&x>0\,,\\ 0\,,&x=0\,,\end{cases}
  \]
  for $1\le i\le n$. We obtain that
  \begin{align}\label{eq:expI2}
  &I_2(i) =\int_{0}^{\infty}g_i(\pi^2\theta_2 z^2)\,\d z+\mathcal O(\Delta_n)\,.
  \end{align}
  Finally, for $R(i)$ we use Lemma~\ref{lem:riemann}$(ii)$ applied to 
  \[
  h_{i}(x)=\frac{1-e^{-x}}{x}\Big(1-\frac{(1-e^{-x})e^{-2x(i-1)}}{2}\Big)\quad\text{for}\quad x>0,
  \]
  satisfying $\lim_{x\downarrow0}h_i(x)=1$. We conclude $R(i)=\frac12\sqrt{\Delta_{n}}+\mathcal{O}(\delta^{-2}\Delta_{n})$.
\end{proof}

With the above lemmas we can deduce Proposition~\ref{propExp}.

\begin{proof}[Proof of Proposition~\ref{propExp}]
  Computing the integrals in Lemma \ref{lem2}
  \begin{align*}\int_0^\infty \frac{1-e^{-z^2}}{z^2} \,\d z=\sqrt{\pi}\,,\,\int_0^\infty \frac{(1-e^{-z^2})^2}{z^2} e^{-2\tau z^2}\,\d z=\sqrt{\pi}\big(2\sqrt{1+2\tau}-\sqrt{2\tau}-\sqrt{2(1+\tau)}\big)
  \end{align*}
  for any $\tau\in\R$, we derive that
  \begin{align*} 
    \E[(\Delta_i X)^2(y)]&=\sqrt{\Delta_n}\,e^{-y\,\vartheta_1/\vartheta_2}\sigma^2\sqrt{\frac{1}{\vartheta_2\pi}}\big(1-\sqrt{1+2(i-1)}+\frac{1}{2}\sqrt{2(i-1)}+\frac{1}{2}\sqrt{2+2(i-1)}\big)\\
    &\quad+r_{n,i}+\mathcal O(\Delta_n^{3/2}) \,.
  \end{align*}
  Since for the $i$-dependent term, by a series expansion we have that
  \begin{align}\label{idep}
    \sum_{i=1}^n\big(\sqrt{1+2(i-1)}-\frac{1}{2}\sqrt{2(i-1)}-\frac{1}{2}\sqrt{2+2(i-1)}\big)=\mathcal O\Big(\sum_{i=1}^{n} i^{-3/2}\Big)\,,
  \end{align}
  where the latter sum converges, we obtain
  \begin{align*}
    &\frac{1}{n}\sum_{i=1}^ne^{-y\,\vartheta_1/\vartheta_2}\sigma^2\sqrt{\frac{1}{\vartheta_2\pi}}\big(1-\sqrt{1+2(i-1)}+\frac{1}{2}\sqrt{2(i-1)}+\frac{1}{2}\sqrt{2+2(i-1)}\big)\\
		&\qquad=e^{-y\,\vartheta_1/\vartheta_2}\sqrt{\frac{1}{\vartheta_2\pi}}\sigma^2 +\mathcal O(\Delta_n).
  \end{align*}
  Since also the remainders from Lemmas \ref{lem1} and \ref{lem2} are bounded uniformly in $i$ with the claimed rate, we obtain the assertion.
\end{proof}

Next, we prove the decay of the covariances.

\begin{proof}[Proof of Proposition~\ref{propacf}]Let us first calculate correlations between the terms $B_{i,k}$ and $C_{i,k}$, which are required frequently throughout the following proofs.
\begin{subequations}
  \begin{align}
  \label{Sbb}\Sigma_{ij}^{B,k}=\cov(B_{i,k},B_{j,k})&=\sigma^2 (e^{-\lambda_k \Delta_n}-1)^2\int_{0}^{(\min(i,j)-1)\Delta_n} e^{-\lambda_k((i+j-2)\Delta_n-2s)}\,\d s\\
  &\notag=\big(e^{-\lambda_k|i-j|\Delta_n}-e^{-\lambda_k(i+j-2)\Delta_n}\big)\frac{(e^{-\lambda_k \Delta_n}-1)^2}{2\lambda_k}\sigma^2\,,\\
	\label{Scc}\Sigma_{ij}^{C,k}=\cov(C_{i,k},C_{j,k})&=\1_{\{i=j\}}\,\sigma^2\,\frac{1-e^{-2\lambda_k\Delta_n}}{2\lambda_k}\,,\\
  \label{Sbc}\Sigma_{ij}^{BC,k}=\cov(C_{i,k},B_{j,k})&=\1_{\{i<j\}}\,\sigma^2 (e^{-\lambda_k \Delta_n}-1)\int_{(i-1)\Delta_n}^{i\Delta_n} e^{-\lambda_k((i+j-1)\Delta_n-2s)}\,\d s\\
  &\notag=\1_{\{i<j\}}\big(e^{-\lambda_k(j-i-1)\Delta_n}-e^{-\lambda_k(j-i+1)\Delta_n}\big)\frac{(e^{-\lambda_k \Delta_n}-1)}{2\lambda_k}\sigma^2\,\\
  &\notag=\1_{\{i<j\}}\,e^{-\lambda_k(j-i)\Delta_n}\big(e^{\lambda_k\Delta_n}-e^{-\lambda_k\Delta_n}\big)\frac{(e^{-\lambda_k \Delta_n}-1)}{2\lambda_k}\sigma^2\,.
  \end{align}
\end{subequations}
The indicator $\1_{\{i<j\}}$ in \eqref{Scc} reflects that $\E[C_{i,k}B_{j,k}]=0$ whenever $i\ge j$. Since the Ornstein-Uhlenbeck processes $(x_k)_{k\ge1}$ are mutually independent, covariances between terms with $k\ne l$ vanish. Note also the independence between $A_{i,k}$ and all $B_{i,k}$ and $C_{i,k}$.\\ Without loss of generality, consider $\cov\big(\Delta_i X(y),\Delta_j X(y)\big)$ for $j>i$. Decompose increments as in \eqref{eq:ABC}:
\begin{align*}&\cov\big(\Delta_i X(y),\Delta_j X(y)\big)=\sum_{k\ge 1}\cov(\Delta_i x_k,\Delta_j x_k)\,e_k^2(y)\\
&~=\sum_{k\ge 1}\cov(A_{i,k}+B_{i,k}+C_{i,k},A_{j,k}+B_{j,k}+C_{j,k})\,e_k^2(y)\\
&~=r_{i,j} +\sum_{k\ge 1}\big(\Sigma_{ij}^{B,k}+\Sigma_{ij}^{BC,k}\big)\,e_k^2(y)
\end{align*}
with $\Sigma_{ij}^{B,k}$ and $\Sigma_{ij}^{BC,k}$ from \eqref{Sbb} and \eqref{Sbc} and with 
\begin{align*}
  r_{i,j}&=\sum_{k\ge 1}\var\big(\langle \xi,e_k\rangle_{\theta}\big)e^{-\lambda_k(i+j-2)\Delta_n}(e^{-\lambda_k\Delta_n}-1)^2\,e_k^2(y)\\
  &\le 2\sup_k\E[\langle A_\theta^{1/2} \xi,e_k\rangle_\theta^2]\sum_{k\ge 1}\frac{(e^{-\lambda_k\Delta_n}-1)^2}{\lambda_k}e^{-\lambda_k(i+j-2)\Delta_n}\,.
\end{align*}
Using analogous steps as in the proof of Proposition \ref{propExp}, we derive that
\begin{align}
&\cov\big(\Delta_i X(y),\Delta_j X(y)\big)
=\sum_{k\ge 1}\big(\Sigma_{ij}^{B,k}+\Sigma_{ij}^{BC,k}\big)\,e_k^2(y)+r_{i,j}\notag\\
&\quad=\sum_{k\ge 1}e^{-\lambda_k(j-i)\Delta_n}\sigma^2\frac{(e^{-\lambda_k\Delta_n}-1)^2+1-e^{\lambda_k\Delta_n}-e^{-2\lambda_k\Delta_n}+e^{-\lambda_k\Delta_n}}{2\lambda_k}e_k^2(y)\notag\\
&\qquad -\underbrace{\sum_{k\ge 1}e^{-\lambda_k(j+i-2)\Delta_n}\sigma^2\frac{(e^{-\lambda_k\Delta_n}-1)^2}{2\lambda_k}e_k^2(y)}_{=:s_{i,j}}+r_{i,j}\notag\\
&~=\sum_{k\ge 1}e^{-\lambda_k(j-i)\Delta_n}\sigma^2\frac{\big(2-e^{-\lambda_k\Delta_n}-e^{\lambda_k\Delta_n}\big)}{2\lambda_k}e_k^2(y)+s_{i,j}+r_{i,j}\notag\\
&~=\sum_{k\ge 1}e^{-\lambda_k(j-i-1)\Delta_n}\sigma^2\frac{\big(2e^{-\lambda_k\Delta_n}-e^{-2\lambda_k\Delta_n}-1\big)}{2\lambda_k}e_k^2(y)+s_{i,j}+r_{i,j}\notag\\
&~=-\Delta_n^{1/2}\sqrt{\Delta_n}\sum_{k\ge 1}e^{-\lambda_k(j-i-1)\Delta_n}\sigma^2\frac{\big(e^{-\lambda_k\Delta_n}-1\big)^2}{2\lambda_k\Delta_n}e_k^2(y)+s_{i,j}+r_{i,j}.\label{eq:cov}
\end{align}
Using the same approximation based on the equality $\sin^2(z)=\frac{1}{2}-\cos(2z)$ as in Lemma \ref{lem2} combined with the Riemann sum approximations from Lemma~\ref{lem:riemann}, the previous line equals
\begin{align*}
&-\Delta_n^{1/2}\exp(-y\,\vartheta_1/\vartheta_2)\frac{\sigma^2}{2\pi\sqrt{\theta_2}}\int_0^{\infty} \frac{\big(1-e^{-z^2}\big)^2}{z^2} \,e^{-z^2(j-i-1)}\,\d z\,+s_{i,j}+r_{i,j}+\mathcal O(\Delta_n^{3/2})\\
=&-\Delta_n^{1/2}\exp(-y\,\vartheta_1/\vartheta_2)\frac{\sigma^2}{2\sqrt{\pi\theta_2}}\big(2\sqrt{j-i}-\sqrt{j-i-1}-\sqrt{j-i+1}\big)\,+s_{i,j}+r_{i,j}+\mathcal O(\Delta_n^{3/2})\,.\notag
\end{align*}

The claim is implied by the estimate 
\begin{align*}
  \sum_{i,j=1}^n(s_{i,j}+r_{i,j})&\le \big(\sigma^2+2\sup_k\E[\langle A_\theta^{1/2} \xi,e_k\rangle_\theta^2]\big)\sum_{i,j=1}^n\sum_{k\ge 1}e^{-\lambda_k(j+i-2)\Delta_n}\frac{(e^{-\lambda_k\Delta_n}-1)^2}{\lambda_k}\\
  &=\big(\sigma^2+2\sup_k\E[\langle A_\theta^{1/2} \xi,e_k\rangle_\theta^2]\big)\sum_{k\ge 1}\frac{(e^{-\lambda_k\Delta_n}-1)^2}{\lambda_k}\Big(\sum_{i=0}^{n-1}e^{-\lambda_ki\Delta_n}\Big)^2\\
  &\le\big(\sigma^2+2\sup_k\E[\langle A_\theta^{1/2} \xi,e_k\rangle_\theta^2]\big)\sum_{k\ge 1}\frac{(e^{-\lambda_k\Delta_n}-1)^2}{\lambda_k}(1-e^{-\lambda_k\Delta_n})^{-2}=\mathcal{O}(1).\,\,\qedhere
\end{align*}
\end{proof}
\subsection{Proofs of the central limit theorems}
We establish the central limit theorems based on a general central limit theorem for $\rho$-mixing triangular arrays by \citet[Thm.\;4.1]{utev}.
More precisely, we use a version of Utev's theorem where the mixing condition is replaced by explicit assumptions on the partial sums that has been reported by \citet[Theorem B]{peligradUtev1997}. Set
\begin{align}\label{zetaM}
  \zeta_{n,i}:=\frac{\sqrt{\theta_2\pi}}{\sqrt m}\sum_{j=1}^{m}(\Delta_i X)^2(y_j)\exp(y_j\,\theta_1/\theta_2)\,,
\end{align}
where we consider either $m=1$ or $m=m_n$ for a sequence satisfying $m_n=\mathcal O(n^{\rho})$ for some $\rho\in(0,1/2)$ according to Assumption \ref{Obs}. 

If $\xi$ is distributed according to the stationary distribution, i.e.\ the independent coefficients satisfy $\langle\xi,e_{k}\rangle_\theta\sim\mathcal{N}(0,\sigma^{2}/(2\lambda_{k}))$, we have the representation $\Delta_{i}\tilde X(y)=\sum_{k\ge1}\Delta_{i}\tilde x_{k}e_{k}(y)$ with
\begin{equation}\label{statIncrement}
\Delta_{i}\tilde x_{k}=C_{i,k}+\tilde{B}_{i,k},\quad\tilde{B}_{i,k}=\int_{-\infty}^{(i-1)\Delta_{n}}\sigma(e^{-\lambda_{k}\Delta_{n}}-1)e^{-\lambda_{k}((i-1)\Delta_{n}-s)}\d W_{s}^k\,,
\end{equation}
extending $(W^k)_{k\ge 1}$ to Brownian motions on the whole real line for each $k$ defined on a suitable extension of the probability space. Note that $\Delta_{i}\tilde X(y)$ is centered, Gaussian and stationary.

As the following lemma verifies, under Assumption~\ref{cond} it is sufficient to establish the central limit theorems for the stationary sequence
\begin{align}\label{zetatilde}
   \widetilde \zeta_{n,i}=\frac{\sqrt{\theta_2\pi}}{\sqrt m}\sum_{j=1}^{m}(\Delta_i \widetilde X)^2(y_j)\exp(y_j\,\theta_1/\theta_2)\,.
\end{align}

\begin{lem}\label{lem:neglectA}
  On Assumptions \ref{Obs} and \ref{cond}, we have
  \[
  \sum_{i=1}^n(\widetilde \zeta_{n,i} -\zeta_{n,i} )\stackrel{\P}{\rightarrow} 0\,.
\]
\end{lem}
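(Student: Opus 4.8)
The plan is to realise both $X$ and $\widetilde X$ on the extension of the probability space used in \eqref{statIncrement}, driven by the same Brownian motions $(W^k)_k$, and to control $\sum_i(\widetilde\zeta_{n,i}-\zeta_{n,i})$ increment by increment. Splitting the integral defining $\widetilde B_{i,k}$ at $0$ yields $\widetilde B_{i,k}=B_{i,k}+D_{i,k}$ with $D_{i,k}:=(e^{-\lambda_k\Delta_n}-1)e^{-\lambda_k(i-1)\Delta_n}\eta_k$ and $\eta_k:=\int_{-\infty}^0\sigma e^{\lambda_k s}\,\d W^k_s\sim\mathcal N(0,\sigma^2/(2\lambda_k))$, so that, comparing \eqref{eq:ABC} with \eqref{statIncrement},
\[
  R_i(y):=\Delta_i\widetilde X(y)-\Delta_iX(y)=\sum_{k\ge1}(D_{i,k}-A_{i,k})e_k(y)=:R_i^D(y)-R_i^A(y).
\]
Here $R_i^A(y)=\sum_k A_{i,k}e_k(y)=\big(e^{i\Delta_n A_\theta}\xi-e^{(i-1)\Delta_nA_\theta}\xi\big)(y)$ depends only on $\xi$, while $R_i^D(y)=\sum_k D_{i,k}e_k(y)$ is measurable with respect to $(W^k|_{(-\infty,0]})_k$ and is hence independent of $\xi$, of $(W^k|_{[0,\infty)})_k$, and therefore of the whole field of observed increments $(\Delta_iX(y_j))_{i,j}$. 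Using $(\Delta_i\widetilde X)^2-(\Delta_iX)^2=R_i^2+2\,\Delta_iX\,R_i$ together with \eqref{zetaM}--\eqref{zetatilde}, I would decompose
\[
  \sum_{i=1}^n(\widetilde\zeta_{n,i}-\zeta_{n,i})=T_1+T_2^D-T_2^A,\qquad T_1:=\frac{\sqrt{\theta_2\pi}}{\sqrt m}\sum_{j=1}^m\sum_{i=1}^n e^{y_j\theta_1/\theta_2}R_i(y_j)^2,
\]
and $T_2^\bullet:=\frac{2\sqrt{\theta_2\pi}}{\sqrt m}\sum_{j=1}^m\sum_{i=1}^n e^{y_j\theta_1/\theta_2}\Delta_iX(y_j)R_i^\bullet(y_j)$ for $\bullet\in\{A,D\}$.

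The diagonal term $T_1$ is non-negative, so it suffices to bound $\E[T_1]$. Plugging in the explicit second moments of $D_{i,k}$ and $A_{i,k}$, the bounds $\E[\langle\xi,e_k\rangle_\theta^2]\le C/\lambda_k$ and $\sum_k\lambda_k(\E\langle\xi,e_k\rangle_\theta)^2\le\E[\|A_\theta^{1/2}\xi\|_\theta^2]<\infty$ from Assumption~\ref{cond} (with a Cauchy--Schwarz step for the non-centred contribution), the uniform bound $\sup_{k,y}e_k^2(y)<\infty$, and the geometric-series/Riemann-sum estimates already used in the proof of Lemma~\ref{lem1}, one gets $\sum_{i=1}^n\E[R_i(y)^2]=\mathcal O\big(\sum_k\lambda_k^{-1}(1-e^{-\lambda_k\Delta_n})\big)=\mathcal O(\sqrt{\Delta_n})$ uniformly in $y\in[\delta,1-\delta]$. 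Hence $\E[T_1]=\mathcal O(\sqrt{m\Delta_n})\to0$ since $m=\mathcal O(n^\rho)$ with $\rho<1/2$, and Markov's inequality gives $T_1\stackrel{\P}{\rightarrow}0$.

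The cross term is the crux, since the naive estimate $|\sum_i\Delta_iX(y_j)R_i(y_j)|\le(\sum_i(\Delta_iX(y_j))^2)^{1/2}(\sum_i R_i(y_j)^2)^{1/2}$ is only of order $n^{1/4}\cdot n^{-1/4}=\mathcal O_{\P}(1)$ and would be multiplied by the diverging factor $\sqrt m$. For $T_2^A$ I would instead apply Cauchy--Schwarz one increment at a time: using $\sup_{i,y}\E[(\Delta_iX(y))^2]=\mathcal O(\sqrt{\Delta_n})$ from Proposition~\ref{propExp} and the decay $\E[R_i^A(y)^2]=\mathcal O(\Delta_n^{1/2}(1\vee(i-1))^{-3/2})$ (the extra exponential factor $e^{-2\lambda_k(i-1)\Delta_n}$ produces the decay in $i$), one obtains $\sum_{i=1}^n(\E[R_i^A(y_j)^2])^{1/2}=\mathcal O((n\Delta_n)^{1/4})=\mathcal O(1)$ and hence $\E[|T_2^A|]\le\tfrac{C}{\sqrt m}\sum_j e^{y_j\theta_1/\theta_2}(\sup_i\E[(\Delta_iX(y_j))^2])^{1/2}\sum_i(\E[R_i^A(y_j)^2])^{1/2}=\mathcal O(\sqrt m\,\Delta_n^{1/4})=\mathcal O(n^{\rho/2-1/4})\to0$, again using $\rho<1/2$. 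For $T_2^D$ I would exploit the independence of $(R_i^D)_i$ from $(\Delta_iX(y_j))_{i,j}$: this makes $\E[T_2^D]=0$ and factorises the second moment into $\E[(T_2^D)^2]=\tfrac{4\theta_2\pi}{m}\sum_{j,j',i,i'}e^{(y_j+y_{j'})\theta_1/\theta_2}\,\E[\Delta_iX(y_j)\Delta_{i'}X(y_{j'})]\,\E[R_i^D(y_j)R_{i'}^D(y_{j'})]$; bounding the first factor by $\mathcal O(\sqrt{\Delta_n})$ (Cauchy--Schwarz and Proposition~\ref{propExp}) and $|\E[R_i^D(y_j)R_{i'}^D(y_{j'})]|\le C a_{i+i'-2}$ with $a_p:=\sum_k\lambda_k^{-1}(e^{-\lambda_k\Delta_n}-1)^2 e^{-\lambda_k p\Delta_n}$, where $a_0=\mathcal O(\sqrt{\Delta_n})$ and $a_p=\mathcal O(\Delta_n^{1/2}p^{-3/2})$ for $p\ge1$, so that $\sum_{i,i'=1}^n a_{i+i'-2}\le\sum_{p=0}^{2n}(p+1)a_p=\mathcal O(\Delta_n^{1/2}n^{1/2})=\mathcal O(1)$, gives $\E[(T_2^D)^2]=\mathcal O(m\sqrt{\Delta_n})=\mathcal O(n^{\rho-1/2})\to0$. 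Combining the three pieces yields $\sum_{i=1}^n(\widetilde\zeta_{n,i}-\zeta_{n,i})\stackrel{\P}{\rightarrow}0$.

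The main obstacle is exactly this cross term, and it is also the only point where the sharp threshold $\rho<1/2$ of Assumption~\ref{Obs} is used: one has to split $R_i$ into the \emph{stationary-past} correction $R_i^D$, treated through its independence of the data so that only a second moment with the summable kernel $(a_p)$ matters, and the \emph{initial-value} correction $R_i^A$, treated by increment-wise Cauchy--Schwarz so that the smallness $\|\Delta_iX(y)\|_{L^2}^2=\mathcal O(\sqrt{\Delta_n})$ of a single increment is used rather than accumulated over the $n$ time points. Everything else reduces to the Riemann-sum computations already developed for Propositions~\ref{propExp} and~\ref{propacf}.
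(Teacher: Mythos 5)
Your proposal is correct, but it follows a genuinely different decomposition than the paper. The paper never forms a cross term with the full observed increment $\Delta_iX$: writing $\widetilde A_{i,k}$ for your $D_{i,k}$, it expands both squared increments around the common core $\sum_k(B_{i,k}+C_{i,k})e_k(y)$, so the only cross terms are $A_{i,k}\times(B_{i,l}+C_{i,l})$ and $\widetilde A_{i,k}\times(B_{i,l}+C_{i,l})$, whose factors are independent; it then proves the single generic statement $\sqrt m\sum_i T_i\stackrel{\P}{\rightarrow}0$ for the initial-value contribution under Assumption \ref{cond} (quadratic part bounded in first moment, cross part in second moment via $\sum_{i,j}R_{i,j}S_{i,j}=\mathcal O(\Delta_n)$) and applies it twice, using that the stationary coefficients also satisfy Assumption \ref{cond}. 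You instead couple the two fields through the two-sided Brownian motions and expand around $\Delta_iX$ itself, which forces the asymmetric treatment you describe: the stationary-past correction $R_i^D$ is independent of the data, so its cross term is handled by a factorized second moment with the summable kernel $(a_p)$, while $R_i^A$ is not independent of $\Delta_iX$ and is handled by increment-wise Cauchy--Schwarz together with the decay $\E[(R_i^A(y))^2]=\mathcal O(\Delta_n^{1/2}(1\vee(i-1))^{-3/2})$; both estimates indeed follow from the same Riemann-sum and \eqref{intbound}-type computations already in the paper, and your rate bookkeeping ($\E[T_1]=\mathcal O(\sqrt{m\Delta_n})$, $\E|T_2^A|=\mathcal O(\sqrt m\,\Delta_n^{1/4})$, $\E[(T_2^D)^2]=\mathcal O(m\sqrt{\Delta_n})$) is accurate. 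The trade-off: your cross-term bounds are of order $\sqrt m\,\Delta_n^{1/4}$, so you genuinely use the threshold $\rho<1/2$ of Assumption \ref{Obs}, whereas the paper's route only needs $\sqrt m\,\Delta_n^{1/2}\to0$, i.e.\ $m=\KLEINO(n)$, for this particular lemma; under Assumption \ref{Obs} this makes no difference, but the paper's symmetric treatment is slightly sharper and avoids the $i^{-3/2}$ decay argument altogether. One point you should state explicitly: the $T_2^D$ step requires the negative-time extension of the $(W^k)_{k\ge1}$ in \eqref{statIncrement} to be chosen independent of $\xi$ (as well as of $W^k|_{[0,\infty)}$), so that $R^D$ is independent of the whole observed field; this is how the ``suitable extension'' is to be read and is consistent with the paper's construction, but your argument depends on it while the paper's does not.
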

\begin{proof}
  It suffices to show that we have uniformly in $y$
  \begin{align}\label{eq:neglectA}
     \sqrt{m}\,\sum_{i=1}^n\big((\Delta_i \widetilde X)^2(y) -(\Delta_i X)^2(y)\big)\stackrel{\P}{\rightarrow}0\,.
  \end{align}
  By the decomposition \eqref{eq:ABC}, where we write $\widetilde A_{i,k}$ in the stationary case such that $\widetilde A_{i,k}+B_{i,k}=\widetilde B_{i,k}$, we obtain that
  \begin{align*}
     (\Delta_i \tilde X)^2(y) -(\Delta_i X)^2(y)
     =\sum_{k,l}\big(\Delta_i \widetilde x_k\Delta_i \widetilde x_l-\Delta_i x_k\Delta_i x_l\big)e_k(y)e_l(y)=\tilde T_i- T_i
  \end{align*}
  with
    \(T_i:=\sum_{k,l}\big(A_{i,k} A_{i,l}+A_{i,k}(B_{i,l}+C_{i,l})+A_{i,l}(B_{i,k}+C_{i,k})\big)e_k(y)e_l(y)\), 
  and an analogous definition of $\tilde T_i$ where $A_{i,k}$ and $A_{i,l}$ are replaced by $\tilde A_{i,k}$ and $\tilde A_{i,l}$, respectively. We show that $\sqrt{m}\sum_{i=1}^n T_i\stackrel{\P}{\rightarrow}0$ under Assumption \ref{cond}. Since this assumption is especially satisfied under the stationary initial distribution, we then conclude $\sqrt{m}\sum_{i=1}^n \tilde T_i\stackrel{\P}{\rightarrow}0$ implying \eqref{eq:neglectA}. To show that $\sqrt{m}\sum_{i=1}^n T_i\stackrel{\P}{\rightarrow}0$, we further separate
  \begin{align}\label{eq:neglectA2}
    \sum_{i=1}^n T_i=\sum_{i=1}^n\Big(\sum_{k\ge 1} A_{i,k}e_k(y)\Big)^2+2\sum_{i=1}^n\Big(\sum_{k\ge1} A_{i,k}e_k(y)\Big)\Big(\sum_{k\ge1}(B_{i,k}+C_{i,k})e_k(y)\Big).
  \end{align}
  We have that
  \begin{align*}
    &\E\Big[\sum_{i=1}^n\Big(\sum_{k\ge 1} A_{i,k}e_k(y)\Big)^2\Big]
    \le2\E\Big[\sum_{i=1}^n\sum_{k\ge1} A_{i,k}^2\Big]+\E\Big[\Big|\sum_{i=1}^n\sum_{k\neq l} A_{i,k}A_{i,l}e_k(y)e_l(y)\Big)\Big|^2\Big]^{1/2}
  \end{align*}
Denoting $C_\xi:=\sup_k\lambda_k\E[\langle \xi,e_k\rangle_\theta^2]$, with \eqref{elin}, we obtain for the first term:
\begin{align*}
  \sum_{i=1}^n\sum_{k\ge1} \E\big[A_{i,k}^2\big]
  &=\sum_{i=1}^n\sum_{k\ge 1}(1-e^{-\lambda_k\Delta_n})^2e^{-2\lambda_k(i-1)\Delta_n}\E\big[\langle\xi,e_k\rangle_\theta^2\big]\\
  &\le C_\xi\sum_{k\ge1}\frac{(1-e^{-\lambda_k\Delta_n})^2}{\lambda_k}\sum_{i=1}^ne^{-2\lambda_k(i-1)\Delta_n}\\
  &\le C_\xi\sum_{k\ge1}\frac{(1-e^{-\lambda_k\Delta_n})^2}{\lambda_k(1-e^{-2\lambda_k\Delta_n})}
  \le C_\xi\sum_{k\ge1}\frac{(1-e^{-\lambda_k\Delta_n})}{\lambda_k}=\mathcal O(\Delta_n^{1/2}).
\end{align*}
For the second term we have to distinguish between the case $\E[\langle \xi,e_k\rangle_\theta]=0$ in Assumption \ref{cond} $(i)$, in that we can bound
\begin{align*}
  &\sum_{i,j=1}^n\sum_{k\neq l}\E\big[A_{i,k}A_{i,l}A_{j,k}A_{j,l}\big]\\
  &\quad=\sum_{i,j=1}^n\sum_{k\neq l}(1-e^{-\lambda_l\Delta_n})^2(1-e^{-\lambda_k\Delta_n})^2e^{-2(\lambda_k+\lambda_l)(i+j-2)\Delta_n}\E\big[\langle\xi,e_k\rangle_\theta^2\big]\E\big[\langle\xi,e_l\rangle_\theta^2\big]\\
  &\quad\le C_\xi^2\sum_{k,l\ge1}\frac{(1-e^{-\lambda_k\Delta_n})^2(1-e^{-\lambda_l\Delta_n})^2}{\lambda_k\lambda_l(1-e^{-(\lambda_k+\lambda_l)\Delta_n})^2}\le C_\xi^2\sum_{k,l\ge1}\frac{(1-e^{-\lambda_k\Delta_n})(1-e^{-\lambda_l\Delta_n})}{\lambda_k\lambda_l}
  =\mathcal O(\Delta_n),
\end{align*}
using Assumption \ref{cond} $(ii)$, the geometric series and inequality \eqref{elin}, and the second case in Assumption \ref{cond} $(i)$ where by Parseval's identity $C_\xi':=\sum_k\lambda_k\E[\langle \xi,e_k\rangle_\theta^2]<\infty$. Under this condition we have an upper bound
\begin{align*}
 &\sum_{i,j=1}^n\sum_{k\neq l,k'\neq l'}\E\big[A_{i,k}A_{i,l}A_{j,k'}A_{j,l'}\big]\\
 &\quad\le\Big(\sum_{i=1}^n\Big(\sum_{k\ge 1}(1-e^{-\lambda_k\Delta_n})e^{-\lambda_k(i-1)\Delta_n}\E\big[\langle\xi,e_k\rangle_\theta^2\big]^{1/2}\Big)^2\Big)^2\\
 &\quad\le\Big(\sum_{i=1}^n\Big(\sum_{k\ge 1}\frac{(1-e^{-\lambda_k\Delta_n})^2e^{-2\lambda_k(i-1)\Delta_n}}{\lambda_k}\Big)\Big(\sum_{k\ge 1} \lambda_k\E\big[\langle\xi,e_k\rangle_\theta^2\big]\Big)\Big)^2\\
 &\quad\le C_\xi'^2\Big(\sum_{k\ge 1}\frac{(1-e^{-\lambda_k\Delta_n})^2}{\lambda_k(1-e^{-2\lambda_k\Delta_n})}\Big)^2=\mathcal O(\Delta_n).
\end{align*}
The last term is of the same structure as the one in the last step of Lemma \ref{lem1} from where we readily obtain that it is $\mathcal O(\Delta_n)$. Hence, Markov's inequality yields that $\sum_{i=1}^n(\sum_{k} A_{i,k}e_k(y))^2=\mathcal O_{\P}(\Delta_n^{1/2})$. To bound the second term in \eqref{eq:neglectA2}, we use independence of the terms $A_{j,k}$ and $(B_{i,k}+C_{i,k})$ to obtain
\begin{align*}
  &\E\Big[\Big|\sum_{i=1}^n\Big(\sum_{k\ge1} A_{i,k}e_k(y)\Big)\Big(\sum_{k\ge1}(B_{i,k}+C_{i,k})e_k(y)\Big)\Big|^2\Big]\\
  &\quad=\sum_{i,j=1}^n\Big(\sum_{k\ge1}\E[A_{i,k}A_{j,k}]e_k^2(y)\Big)\Big(\sum_{k\ge1}\E\big[(B_{i,k}+C_{i,k})(B_{j,k}+C_{j,k})\big]e_k^2(y)\Big)
  =:\sum_{i,j=1}^n R_{i,j}S_{i,j}\,.
\end{align*}
The first factor is bounded by
\[
  R_{i,j}=\sum_{k\ge1}\E[A_{i,k}A_{j,k}]e_k^2(y)
  \le2C_\xi\sum_{k\ge1}\frac{(1-e^{-\lambda_k\Delta_n})^2}{\lambda_k}e^{-\lambda_k(i+j-2)\Delta_n}\,,
\]
and satisfies $\sup_{i,j} |R_{i,j}|=\mathcal O(\Delta_n^{1/2})$, as well as $\sup_j\sum_i|R_{i,j}|=\mathcal O(\Delta_n^{1/2})$.
The off-diagonal elements with $i<j$ of the second factor $S_{i,j}$ have been calculated in the proof of Proposition~\ref{propacf}:
\begin{align*}
  &S_{i,j}=\sum_{k\ge1}\big(\Sigma_{ij}^{B,k}+\Sigma_{ij}^{BC,k}+\Sigma_{ji}^{BC,k}+\Sigma_{ij}^{C,k}\big)e_k^2(y)
  =\sum_{k\ge1}\big(\Sigma_{ij}^{B,k}+\Sigma_{ij}^{BC,k}\big)e_k^2(y)\\
  &\hspace*{-0.1cm}\le\hspace*{-0.05cm}\sqrt{\Delta_n}\frac{\sigma^2e^{-\frac{y\theta_1}{\theta_2}}}{2\sqrt{\pi\theta_2}}\big(2\sqrt{j\hspace*{-0.025cm}-\hspace*{-0.025cm}i}\hspace*{-0.05cm}-\hspace*{-0.05cm}\sqrt{j\hspace*{-0.025cm}-\hspace*{-0.025cm}i\hspace*{-0.025cm}-\hspace*{-0.025cm}1}\hspace*{-0.05cm}-\hspace*{-0.05cm}\sqrt{j\hspace*{-0.025cm}-\hspace*{-0.025cm}i\hspace*{-0.025cm}+\hspace*{-0.025cm}1}\big)\hspace*{-0.05cm}+\hspace*{-0.05cm}\sum_{k\ge1}\frac{(1-e^{-\lambda_k\Delta_n})^2}{\lambda_k}e^{-\lambda_k(i+j-2)\Delta_n}\hspace*{-0.05cm}+\hspace*{-0.05cm}\mathcal O(\Delta_n^{3/2})
\end{align*}
for $i<j$, while we obtain for $i=j$ that
\begin{align*}
  S_{i,i}=\sum_{k\ge1}\big(\Sigma_{ii}^{B,k}+\Sigma_{ii}^{C,k}\big)e_k^2(y)
  &\le 2\sigma^2\sum_{k\ge1}\Big(\frac{(1-e^{-\lambda_k\Delta_n})^2}{2\lambda_k}+\frac{1-e^{-2\lambda_k\Delta_n}}{2\lambda_k}\Big)= 2\sigma^2\sum_{k\ge1}\frac{1-e^{-\lambda_k\Delta_n}}{\lambda_k}\,.
\end{align*}
Therefore, the second term in \eqref{eq:neglectA2} has a second moment bounded by a constant times
\begin{align*}
  &\sum_{i, j=1}^n\Big(\sum_{k\ge1}\frac{(1-e^{-\lambda_k\Delta_n})^2}{\lambda_k}e^{-\lambda_k(i+j-2)\Delta_n}\Big)\Big(\Delta_n^{1/2}\1_{\{i\neq j\}}|j-i|^{-3/2}\\
  &\qquad+\sum_{k\ge1}\Big(\1_{\{i=j\}}\frac{1-e^{-\lambda_k\Delta_n}}{\lambda_k}+\frac{(1-e^{-\lambda_k\Delta_n})^2}{\lambda_k}e^{-\lambda_k(i+j-2)\Delta_n}\Big)+ \mathcal O(\Delta_n^{3/2})\Big)\\
  &\quad=\mathcal O\Big(\Delta_n\sum_{j\ge1} j^{-3/2}+\Delta_n\Big)=\mathcal O(\Delta_n).
\end{align*}
We conclude that both terms in \eqref{eq:neglectA2} are of the order $\mathcal O_{\P}(\Delta_n^{1/2})$ and thus that $\sqrt{m}\sum_{i=1}^n T_i\stackrel{\P}{\rightarrow}0$ under Assumption \ref{Obs}. This implies \eqref{eq:neglectA}.
\end{proof}

Thanks to the previous lemma we can throughout investigate the solution process under the stationary initial condition.
To prepare the central limit theorems, we compute the asymptotic variances in the next step. According to \eqref{estm1}, we define the rescaled realized volatility at $y\in(0,1)$ based on the first $p\le n$ increments as
\begin{align}\label{rvr}
  V_{p,\Delta_n}(y):=\frac{1}{p\sqrt{\Delta_n}}\sum_{i=1}^p(\Delta_i \tilde X)^2(y)e^{y\,\vartheta_1/\vartheta_2}.
\end{align}
\begin{prop}\label{varprop}
  On Assumptions \ref{Obs} and \ref{cond}, the covariance of the rescaled realized volatility for two spatial points $y_1,y_2\in[\delta,1-\delta]$ satisfies for any $\eta\in(0,1)$
  \begin{align}
    \cov\big(V_{p,\Delta_n}(y_1),V_{p,\Delta_n}(y_2)\big)&=\1_{\{y_1=y_2\}} p^{-1}\Gamma\sigma^4\vartheta_2^{-1}\big(1+\mathcal O(1\wedge(p^{-1}\Delta_n^{\eta-1}))\big)\label{varpropeq}\\
    &\qquad+ \mathcal O\Big(\frac{\Delta_n^{1/2}}{p}(\1_{\{y_1\neq y_2\}}|y_1-y_2|^{-1}+ \delta^{-1})\Big)\notag
  \end{align}
  where $\Gamma>0$ is a numerical constant given in \eqref{eq:constantVariance} with $\Gamma \approx 0.75$. In particular, we have $\var(V_{n,\Delta_n}(y))=n^{-1}\Gamma\sigma^4\vartheta_2^{-1}(1+\mathcal O(\sqrt{\Delta_n}))$.
\end{prop}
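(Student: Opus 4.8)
The plan is to reduce everything to the cross-covariances of the increments of the \emph{stationary} field $\tilde X$ and then to exploit Gaussianity. Since $V_{p,\Delta_n}$ in \eqref{rvr} is built from $\tilde X$, the vector $(\Delta_i\tilde X(y_j))_{i,j}$ is centered Gaussian, so the Isserlis/Wick identity $\cov(U^2,V^2)=2\cov(U,V)^2$ (valid for jointly centered Gaussian $U,V$) gives
\begin{align*}
  \cov\big(V_{p,\Delta_n}(y_1),V_{p,\Delta_n}(y_2)\big)=\frac{2e^{(y_1+y_2)\theta_1/\theta_2}}{p^2\Delta_n}\sum_{i,i'=1}^p\cov\big(\Delta_i\tilde X(y_1),\Delta_{i'}\tilde X(y_2)\big)^2\,.
\end{align*}
Hence the whole statement follows once I control the cross-covariances $\cov(\Delta_i\tilde X(y_1),\Delta_{i'}\tilde X(y_2))$ together with their decay in the lag $|i-i'|$.

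For the cross-covariance I would use independence of the $(\tilde x_k)_k$ to write \(\cov(\Delta_i\tilde X(y_1),\Delta_{i'}\tilde X(y_2))=\sum_{k\ge1}\gamma_k(|i-i'|)\,e_k(y_1)e_k(y_2)\), where $\gamma_k(h)=\cov(\Delta_i\tilde x_k,\Delta_{i+h}\tilde x_k)$ is the stationary Ornstein--Uhlenbeck increment autocovariance, explicitly $\gamma_k(0)=\sigma^2(1-e^{-\lambda_k\Delta_n})/\lambda_k$ and $\gamma_k(h)=-\sigma^2(1-e^{-\lambda_k\Delta_n})^2e^{-\lambda_k(h-1)\Delta_n}/(2\lambda_k)$ for $h\ge1$; this is exactly the computation in the proof of Proposition~\ref{propacf} with the $\xi$-remainders $r_{i,j}$ and the term $s_{i,j}$ now absent by stationarity. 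Inserting the product-to-sum formula $e_k(y_1)e_k(y_2)=e^{-\theta_1(y_1+y_2)/(2\theta_2)}\big(\cos(\pi k(y_1-y_2))-\cos(\pi k(y_1+y_2))\big)$ I would then apply Lemma~\ref{lem:riemann}: part~(i) to the constant term $\cos(\pi k\cdot0)\equiv1$, available only when $y_1=y_2$, which reproduces the main term $\Delta_n^{1/2}e^{-y_1\theta_1/\theta_2}c(|i-i'|)$ with $c(0)=\sigma^2/\sqrt{\pi\theta_2}$ and $c(h)=-\tfrac{\sigma^2}{2\sqrt{\pi\theta_2}}\big(2\sqrt h-\sqrt{h-1}-\sqrt{h+1}\big)$ (consistent with Propositions~\ref{propExp} and \ref{propacf}); and part~(ii), evaluated at the spatial arguments $|y_1-y_2|/2$ and $(y_1+y_2)/2\in[\delta,1-\delta]$, to bound the remaining oscillating contributions by $\mathcal O\big(\Delta_n(\1_{\{y_1\neq y_2\}}|y_1-y_2|^{-1}+\delta^{-1})\big)$. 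A key point is that the lag-dependent summands $x\mapsto(1-e^{-x})^2e^{-x(h-1)}/x$ have $L^1$-type norms of order $1\wedge h^{-3/2}$ uniformly in $h$, so that this remainder, call it $\rho_{i,i'}(y_1,y_2)$, in fact satisfies $|\rho_{i,i'}(y_1,y_2)|=\mathcal O\big(\Delta_n(1\wedge|i-i'|^{-3/2})(\1_{\{y_1\neq y_2\}}|y_1-y_2|^{-1}+\delta^{-1})\big)$.

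Finally I would square and sum. For $y_1=y_2=:y$ one has $\cov(\Delta_i\tilde X(y),\Delta_{i'}\tilde X(y))^2=\Delta_n e^{-2y\theta_1/\theta_2}c(|i-i'|)^2+2\Delta_n^{1/2}e^{-y\theta_1/\theta_2}c(|i-i'|)\rho_{i,i'}+\rho_{i,i'}^2$. Since $2\sqrt h-\sqrt{h-1}-\sqrt{h+1}=\mathcal O(h^{-3/2})$, both $\sum_{h\in\Z}c(|h|)^2$ and $\sum_{h\in\Z}|h|c(|h|)^2$ converge, whence $\sum_{i,i'=1}^pc(|i-i'|)^2=p\big(c(0)^2+2\sum_{h\ge1}c(h)^2\big)+\mathcal O(1)=\tfrac12 p\,\Gamma\sigma^4\theta_2^{-1}+\mathcal O(1)$ with $\Gamma=\tfrac1\pi\big(2+\sum_{h\ge1}(2\sqrt h-\sqrt{h-1}-\sqrt{h+1})^2\big)\approx0.75$, which is the constant \eqref{eq:constantVariance}. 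The lag-decay of $\rho_{i,i'}$ makes the cross term sum to $\mathcal O(\Delta_n^{3/2}\delta^{-1}p)$ and the squared remainder to $\mathcal O\big(\Delta_n^2(\1_{\{y_1\ne y_2\}}|y_1-y_2|^{-1}+\delta^{-1})^2p\big)$; dividing by $p^2\Delta_n$ (up to the bounded factor $2e^{(y_1+y_2)\theta_1/\theta_2}$) turns these into the remainders displayed in \eqref{varpropeq}, the $\mathcal O(1)$ tail correction producing the $\mathcal O(1\wedge(p^{-1}\Delta_n^{\eta-1}))$ factor once the $\Delta_n$-powers of all the approximation errors of Lemma~\ref{lem:riemann} are tracked — the loss of an arbitrarily small power, i.e.\ $\eta<1$ rather than $\eta=1$, coming precisely from those Riemann-sum errors. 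For $y_1\neq y_2$ the main term is absent and one is left with $\mathcal O\big(p^{-1}\Delta_n(|y_1-y_2|^{-1}+\delta^{-1})^2\big)$, which is dominated by the stated $\mathcal O\big(p^{-1}\Delta_n^{1/2}(|y_1-y_2|^{-1}+\delta^{-1})\big)$ under Assumption~\ref{Obs}. Specialising to $p=n$, $\Delta_n=1/n$ then gives $\var(V_{n,\Delta_n}(y))=n^{-1}\Gamma\sigma^4\theta_2^{-1}(1+\mathcal O(\sqrt{\Delta_n}))$.

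The main difficulty is the bookkeeping in the last two steps. One must extract the decay $\lesssim1\wedge|i-i'|^{-3/2}$ of the oscillatory error $\rho_{i,i'}$ — without it the double sum $\sum_{i,i'}\rho_{i,i'}^2$ would scale like $p^2$ instead of $p$ and the bound would be useless — which forces a careful use of Lemma~\ref{lem:riemann}(ii) with the spatial argument $|y_1-y_2|/2$ and constants uniform in the lag, followed by a reconciliation of the $\Delta_n$-exponents of every Riemann-sum remainder to land on the claimed $\eta<1$. By contrast, the Wick reduction and the identification of $\Gamma$ via the convergent series $\sum_{h\ge1}(2\sqrt h-\sqrt{h-1}-\sqrt{h+1})^2$ are routine once the single-point covariances of Propositions~\ref{propExp} and \ref{propacf} are available in their stationary (remainder-free) form.
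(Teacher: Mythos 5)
Your skeleton is correct and, after expansion, computes exactly the same object as the paper: applying the Wick identity $\cov(U^2,V^2)=2\cov(U,V)^2$ at the level of the field increments and then expanding $\cov(\Delta_i\tilde X(y_1),\Delta_{i'}\tilde X(y_2))=\sum_k\gamma_k(|i-i'|)e_k(y_1)e_k(y_2)$ reproduces the terms $D_{k,l}$ of \eqref{eq:CovV} (the paper applies Isserlis coordinate-wise instead), your $\gamma_k(h)$ and $c(h)$ agree with Propositions \ref{propExp} and \ref{propacf}, and your series $\tfrac1\pi\big(2+\sum_{h\ge1}(2\sqrt h-\sqrt{h-1}-\sqrt{h+1})^2\big)$ is precisely $\Gamma$ from \eqref{eq:constantVariance}. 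The genuine difference is the order of operations: you do the Riemann approximation in $k$ at each fixed lag and then sum over lags, whereas the paper first sums the geometric series over the lag exactly, expands $(1-e^{-(\lambda_k+\lambda_l)\Delta_n})^{-1}=\sum_{r\ge0}e^{-r(\lambda_k+\lambda_l)\Delta_n}$, and only then applies Lemma~\ref{lem:riemann}, bounding one cosine factor by the oscillation estimate and the other by absolute values (cf.\ \eqref{eq:VarRem}); that ordering only needs constants uniform in $r$.

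This is where your proof has a genuine gap, at the very step you flag as the crux. The claimed bound $|\rho_{i,i'}|=\mathcal O\big(\Delta_n(1\wedge h^{-3/2})(\1_{\{y_1\ne y_2\}}|y_1-y_2|^{-1}+\delta^{-1})\big)$, $h=|i-i'|$, does not follow from Lemma~\ref{lem:riemann}(ii): the error constant there is controlled not by $\|f_h(x^2)\|_{L^1}$ (which is indeed $\asymp h^{-3/2}$ for $f_h(x)=(1-e^{-x})^2e^{-(h-1)x}/x$) but by the derivative norms $\|(1\vee x)f_h'(x^2)\|_{L^1}$ and $\|(1\vee x^2)f_h''(x^2)\|_{L^1}$; the first decays only like $h^{-1/2}$ and the second even grows like $h^{1/2}$, so a direct citation of the lemma yields at best a per-lag constant of order $h^{-1/2}$, not $h^{-3/2}$. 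Since, as you note, without lag decay $\sum_{i,i'}\rho_{i,i'}^2$ scales like $p^2$, your bookkeeping for the squared-remainder term (and hence for the $y_1\neq y_2$ bound and the stated $\mathcal O(\Delta_n^{1/2}\delta^{-1}/p)$ diagonal remainder) is not established as written. The step is repairable: going into the proof of Lemma~\ref{lem:riemann}(ii), both the $L^1$-distance of the step approximation and the Fourier-decay bound for $T_2$ are controlled by the total variation of $f_h$, which is $\mathcal O(\max_x f_h(x))=\mathcal O(h^{-1})$, and a per-lag constant $\mathcal O(h^{-1})$ already makes the squared remainders summable and gives the stated bounds (the regime $|y_1-y_2|^{-1}\gtrsim\Delta_n^{-1/2}$ being trivial by Cauchy--Schwarz); alternatively, adopt the paper's ordering, which avoids lag-dependent constants altogether. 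But the $h^{-3/2}$ claim, justified via ``$L^1$-type norms'' of the summands, is incorrect as stated, and the needed refinement is not in your argument; the attribution of the $\eta<1$ loss to the Riemann errors (rather than to the finite-window edge terms of the geometric series, as in the paper) is harmless by comparison.
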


\begin{proof}
Since
\[(\Delta_i \tilde X)^2(y)=\Big(\sum_{k\ge 1}\Delta_i \tilde x_k e_k(y)\Big)^2=\sum_{k, l}\Delta_i \tilde x_k\Delta_i \tilde x_l e_k(y)e_l(y),\]
we obtain the following variance-covariance structure of the rescaled realized volatilities \eqref{rvr} in points $y_1,y_2\in[\delta,1-\delta]$:
\begin{align*}
  &\cov\big(V_{p,\Delta_n}(y_1),V_{p,\Delta_n}(y_2)\big)\notag\\
  &\phantom{D_{k,l}}=\frac{e^{(y_1+y_2)\vartheta_1/\vartheta_2}}{\Delta_np^2}\sum_{i,j=1}^p\bigg(\sum_{k,l\ge1}\,e_k(y_1)e_k(y_2)e_l(y_1)e_l(y_2)\,\cov(\Delta_i \tilde x_k\Delta_i \tilde x_l,\Delta_j \tilde x_k\Delta_j \tilde x_l)\bigg)\notag\\
  &\phantom{D_{k,l}}=\frac{e^{(y_1+y_2)\vartheta_1/\vartheta_2}}{\Delta_np}\sum_{k,l\ge1}\,e_k(y_1)e_k(y_2)e_l(y_1)e_l(y_2)D_{k,l}\quad\text{with}\notag\\
  &D_{k,l}=\frac1p\sum_{i,j=1}^p\cov\big((\tilde B_{i,k}+C_{i,k})(\tilde B_{i,l}+C_{i,l}),(\tilde B_{j,k}+C_{j,k})(\tilde B_{j,l}+C_{j,l})\big)\,,
\end{align*}
while other covariances vanish by orthogonality. We will thus need the covariances of the terms in the decomposition from \eqref{statIncrement} corresponding to \eqref{Sbb} to \eqref{Sbc}. Since 
\[
  \widetilde B_{i,k}=B_{i,k}+\sigma\frac{(e^{-\lambda\Delta_n}-1)}{\sqrt{2\lambda_k}}\,e^{\lambda_k(i-1)\Delta_n}Z_k\,
\]
for a sequence of independent standard normal random variables $(Z_k)_{k\ge1}$, we find that
\begin{align}
  \cov(\widetilde B_{j,k},C_{i,k})&=\cov(B_{j,k}, C_{i,k})=\Sigma_{ij}^{BC,k},\notag\\
  \cov(\widetilde B_{i,k},\widetilde B_{j,k})&=(2\lambda_k)^{-1} \,\sigma^2(e^{-\lambda_k\Delta_n}-1)^2e^{-\lambda_k|i-j|\Delta_n}=:\widetilde\Sigma_{ij}^{B,k}.\label{eq:covTilde}
\end{align}
We deduce from Isserlis' theorem
\begin{align}\label{eq:CovV}
D_{k,l}=\frac2p\sum_{i,j=1}^p\big(\tilde\Sigma_{ij}^{B,k}+\Sigma_{ij}^{BC,k}+\Sigma_{ji}^{BC,k}+\Sigma_{ij}^{C,k}\big)\big(\tilde\Sigma_{ij}^{B,l}+\Sigma_{ij}^{BC,l}+\Sigma_{ji}^{BC,l}+\Sigma_{ij}^{C,l}\big).
\end{align}
In order to calculate $D_{k,l}$, we first study the case $k<l$ and consider the different addends consecutively.
\begin{subequations}
Using \eqref{eq:covTilde}, we derive that
\begin{align}\label{h1}\frac1p\sum_{i,j=1}^p\tilde\Sigma_{ij}^{B,k}\tilde\Sigma_{ij}^{B,l}&=\sigma^4\frac{(e^{-\lambda_k\Delta_n}-1)^2(e^{-\lambda_l\Delta_n}-1)^2}{4\lambda_k\lambda_l}\frac{1+e^{-(\lambda_k+\lambda_l)\Delta_n}}{1-e^{-(\lambda_k+\lambda_l)\Delta_n}}\\
&\quad\times \notag\Big(1+\mathcal O\Big(\frac{p^{-1}}{1-e^{-(\lambda_k+\lambda_l)\Delta_n}}\Big)\Big)\,
\end{align}
exploiting the well-known formula for the geometric series, that we have for any $q>0$
	\[\sum_{i,j=1}^pq^{|i-j|}
    =2\sum_{i=1}^p\sum_{j=1}^{i-1}q^{i-j}+p
    =p\frac{1+q}{1-q}+2\,\frac{q^{p+1}-q}{(q-1)^2},\]
which gives for $q=\exp(-(\lambda_k+\lambda_l)\Delta_n)$ that
\begin{align*}
  \frac1p\sum_{i,j=1}^pe^{-(\lambda_k+\lambda_l)|i-j|\Delta_n}=\frac{1+e^{-(\lambda_k+\lambda_l)\Delta_n}}{1-e^{-(\lambda_k+\lambda_l)\Delta_n}}\Big(1+\mathcal O\Big( 1\wedge \frac{p^{-1}}{1-e^{-(\lambda_k+\lambda_l)\Delta_n}}\Big)\Big)\,.
\end{align*}
Using \eqref{Scc}, we obtain that
\begin{align} \label{h2}
  \frac 1p \sum_{i,j=1}^p\Sigma_{ij}^{C,k}\Sigma_{ij}^{C,l}=\sigma^4\frac{(1-e^{-2\lambda_k\Delta_n})(1-e^{-2\lambda_l\Delta_n})}{4\lambda_k\lambda_l}\,.
\end{align}
With \eqref{Sbc}, we derive that
\begin{align}\label{h3}
  \frac 1p\sum_{i,j=1}^p\Sigma_{ij}^{BC,k}\Sigma_{ij}^{BC,l}&=\sigma^4\frac{(e^{-\lambda_k\Delta_n}-1)(e^{-\lambda_l\Delta_n}-1)}{4\lambda_k\lambda_l}\frac{(1-e^{-2\lambda_k\Delta_n})(1-e^{-2\lambda_l\Delta_n})}{1-e^{-(\lambda_k+\lambda_l)\Delta_n}}\,\\
  &\qquad\times \Big(1+\mathcal O\Big(1\wedge \frac{p^{-1}}{1-e^{-(\lambda_k+\lambda_l)\Delta_n}}\Big)\Big)\notag
\end{align}
and analogously for $\Sigma_{ji}^{BC,k}\Sigma_{ji}^{BC,l}$, using the auxiliary calculation:
\begin{align*}
  &\frac 1p\sum_{i=1}^p\sum_{j=i+1}^p(e^{\lambda_k\Delta_n}-e^{-\lambda_k\Delta_n})(e^{\lambda_l\Delta_n}-e^{-\lambda_l\Delta_n})e^{-(\lambda_k+\lambda_l)(j-i)\Delta_n}\\
  &=(e^{\lambda_k\Delta_n}-e^{-\lambda_k\Delta_n})(e^{\lambda_l\Delta_n}-e^{-\lambda_l\Delta_n})p^{-1}\sum_{i=1}^p\frac{e^{-(\lambda_k+\lambda_l)\Delta_n}-e^{-(\lambda_k+\lambda_l)(p-i+1)\Delta_n}}{1-e^{-(\lambda_k+\lambda_l)\Delta_n}}\\
  &=(e^{\lambda_k\Delta_n}-e^{-\lambda_k\Delta_n})(e^{\lambda_l\Delta_n}-e^{-\lambda_l\Delta_n})\frac{e^{-(\lambda_k+\lambda_l)\Delta_n}}{1-e^{-(\lambda_k+\lambda_l)\Delta_n}}\Big(1+\mathcal O\Big(1\wedge \frac{p^{-1}}{1-e^{-(\lambda_k+\lambda_l)\Delta_n}}\Big)\Big)\\
  &=\frac{(1-e^{-2\lambda_k\Delta_n})(1-e^{-2\lambda_l\Delta_n})}{1-e^{-(\lambda_k+\lambda_l)\Delta_n}}\Big(1+\mathcal O\Big(1\wedge \frac{p^{-1}}{1-e^{-(\lambda_k+\lambda_l)\Delta_n}}\Big)\Big)\,.
  \end{align*}
For the mixed terms, we infer that
\begin{align}\label{h4}
\frac 1p\sum_{i,j=1}^p\tilde \Sigma_{ij}^{B,k}\big(\Sigma_{ij}^{BC,l}+\Sigma_{ji}^{BC,l}\big)&=\sigma^4\frac{(e^{-\lambda_k\Delta_n}-1)^2(e^{-\lambda_l\Delta_n}-1)}{2\lambda_k\lambda_l}e^{-\lambda_k\Delta_n}\frac{1-e^{-2\lambda_l\Delta_n}}{1-e^{-(\lambda_k+\lambda_l)\Delta_n}}\\
&\qquad \times \Big(1+\mathcal O\Big(1\wedge \frac{p^{-1}}{1-e^{-(\lambda_k+\lambda_l)\Delta_n}}\Big)\Big),\notag\\
  \label{h5}\frac 1p\sum_{i,j=1}^p\tilde\Sigma_{ij}^{B,k}\Sigma_{ij}^{C,l}&=\sigma^4\frac{(e^{-\lambda_k\Delta_n}-1)^2(1-e^{-2\lambda_l\Delta_n})}{4\lambda_k\lambda_l}\,,\\
  \label{hl}\frac 1p\sum_{i,j=1}^p\Sigma_{ij}^{C,k}\Sigma_{ij}^{BC,l}&=\frac 1p\sum_{i,j=1}^p\Sigma_{ij}^{BC,k}\Sigma_{ij}^{C,l}
  =\frac 1p\sum_{i,j=1}^p\Sigma_{ij}^{BC,k}\Sigma_{ji}^{BC,l}=0\,,
\end{align}
\end{subequations}
based on \eqref{Sbb}-\eqref{Sbc} and similar ingredients as above. With \eqref{h1}-\eqref{hl}, we obtain that $D_{k,l},k<l,$ from \eqref{eq:CovV} is given by $\sigma^4(1+\mathcal O(1\wedge\frac{p^{-1}}{1-e^{-(\lambda_k+\lambda_l)\Delta_n}}))$ times
\begin{align*}
  &2\bigg(\frac{(e^{-\lambda_k\Delta_n}-1)^2(e^{-\lambda_l\Delta_n}-1)^2}{4\lambda_k\lambda_l}\frac{1+e^{-(\lambda_k+\lambda_l)\Delta_n}}{1-e^{-(\lambda_k+\lambda_l)\Delta_n}}+\frac{(1-e^{-2\lambda_k\Delta_n})(1-e^{-2\lambda_l\Delta_n})}{4\lambda_k\lambda_l}\\
  &\quad +\frac{(e^{-\lambda_k\Delta_n}-1)(e^{-\lambda_l\Delta_n}-1)}{2\lambda_k\lambda_l}\frac{(1-e^{-2\lambda_k\Delta_n})(1-e^{-2\lambda_l\Delta_n})}{1-e^{-(\lambda_k+\lambda_l)\Delta_n}}\\
  &\quad +\frac{(e^{-\lambda_k\Delta_n}-1)^2(e^{-\lambda_l\Delta_n}-1)}{2\lambda_k\lambda_l}e^{-\lambda_k\Delta_n}\frac{1-e^{-2\lambda_l\Delta_n}}{1-e^{-(\lambda_k+\lambda_l)\Delta_n}}+\frac{(e^{-\lambda_k\Delta_n}-1)^2(1-e^{-2\lambda_l\Delta_n})}{4\lambda_k\lambda_l}\\ &\quad +\frac{(e^{-\lambda_l\Delta_n}-1)^2(1-e^{-2\lambda_k\Delta_n})}{4\lambda_k\lambda_l}+\frac{(e^{-\lambda_l\Delta_n}-1)^2(e^{-\lambda_k\Delta_n}-1)}{2\lambda_k\lambda_l}e^{-\lambda_l\Delta_n}\frac{1-e^{-2\lambda_k\Delta_n}}{1-e^{-(\lambda_k+\lambda_l)\Delta_n}}\bigg)\\
  &=\frac{(1-e^{-\lambda_k\Delta_n})^2(1-e^{-\lambda_l\Delta_n})^2}{2\lambda_k\lambda_l}\frac{3-e^{-(\lambda_k+\lambda_l)\Delta_n}}{1-e^{-(\lambda_k+\lambda_l)\Delta_n}}\\
  &\quad+\frac{(1-e^{-2\lambda_k\Delta_n})(1-e^{-2\lambda_l\Delta_n})}{2\lambda_k\lambda_l}+\frac{(1-e^{-\lambda_k\Delta_n})^2(1-e^{-2\lambda_l\Delta_n})}{{2\lambda_k\lambda_l}}
  +\frac{(1-e^{-\lambda_l\Delta_n})^2(1-e^{-2\lambda_k\Delta_n})}{{2\lambda_k\lambda_l}}\\
  &=\frac{(1-e^{-\lambda_k\Delta_n})^2(1-e^{-\lambda_l\Delta_n})^2}{2\lambda_k\lambda_l}\frac{4-2 e^{-(\lambda_k+\lambda_l)\Delta_n}}{1-e^{-(\lambda_k+\lambda_l)\Delta_n}}\\
  &\quad +\frac{(1-e^{-\lambda_k\Delta_n})(1-e^{-\lambda_l\Delta_n})}{2\lambda_k\lambda_l}\Big(-(1-e^{-\lambda_k\Delta_n})(1-e^{-\lambda_l\Delta_n})+(1+e^{-\lambda_k\Delta_n})(1+e^{-\lambda_l\Delta_n})\\
  &\quad\qquad+(1-e^{-\lambda_k\Delta_n})(1+e^{-\lambda_l\Delta_n})+(1+e^{-\lambda_k\Delta_n})(1-e^{-\lambda_l\Delta_n})\Big)\\
  &=\frac{(1-e^{-\lambda_k\Delta_n})^2(1-e^{-\lambda_l\Delta_n})^2}{\lambda_k\lambda_l}\frac{2-e^{-(\lambda_k+\lambda_l)\Delta_n}}{1-e^{-(\lambda_k+\lambda_l)\Delta_n}}\\
  &\quad+\frac{(1-e^{-\lambda_k\Delta_n})(1-e^{-\lambda_l\Delta_n})}{\lambda_k\lambda_l}\big(2-(1-e^{-\lambda_k\Delta_n})(1-e^{-\lambda_l\Delta_n})\big)\\
  &=\frac{(1-e^{-\lambda_k\Delta_n})^2(1-e^{-\lambda_l\Delta_n})^2}{\lambda_k\lambda_l}\frac{1}{1-e^{-(\lambda_k+\lambda_l)\Delta_n}}+2\frac{(1-e^{-\lambda_k\Delta_n})(1-e^{-\lambda_l\Delta_n})}{\lambda_k\lambda_l}=:\bar D_{k,l}.
\end{align*}
Since $(1-e^{-(\lambda_k+\lambda_l)\Delta_n})^{-1}\le(1-e^{-\lambda_k\Delta_n})^{-1/2}(1-e^{-\lambda_l\Delta_n})^{-1/2}$, the remainder involving $1\wedge\frac{p^{-1}}{1-e^{-(\lambda_k+\lambda_l)\Delta_n}}$ is negligible if $p$ is sufficiently large:
\begin{align*}
  \frac1{\Delta_np^2}\sum_{k<l}\frac{\bar D_{k,l}}{1-e^{-(\lambda_k+\lambda_l)\Delta_n}}
  &\le \frac{3}{\Delta_n^2p^2}\Big(\Delta_n^{1/2}\sum_{k\ge 1}\frac{(1-e^{-\lambda_k\Delta_n})^{1/2}}{\lambda_k}\Big)^2\\
	&=\mathcal O\Big(\frac{\Delta_n^{\eta-1}}{p^2}\Big(\int_0^\infty\frac{(1-e^{-z^2})^{1/2}}{z^{\eta+1}}\d z\Big)^2\Big),
\end{align*}
for any $\eta\in(0,1)$. For smaller $p$ we can always obtain a bound of order $\mathcal O(p^{-1})$. The diagonal terms $k=l$ in \eqref{eq:CovV} are bounded by
\vspace*{-.4cm}
\[D_{k,k}\le\frac{8}{p}\sum_{i,j=1}^p\big(\big(\tilde{\Sigma}_{ij}^{B,k}\big)^2+2\big(\Sigma_{ij}^{BC,k}\big)^2+\big(\Sigma_{ij}^{C,k}\big)^2\big)\,.
\]
With estimates analogous  to \eqref{h1}, \eqref{h2} and \eqref{h3} we obtain $\frac{1}{\Delta_np^2}\sum_{k\ge1}D_{k,k}=\mathcal O(\Delta_n^{1/2}/p)$ which is thus negligible. By symmetry $D_{k,l}=D_{l,k}$, we conclude
\begin{align}
  \cov\big(V_{p,\Delta_n}(y_1),V_{p,\Delta_n}(y_2)\big)
  &=\frac{2\sigma^4e^{(y_1+y_2)\vartheta_1/\vartheta_2}}{\Delta_np}\sum_{k<l}\,e_k(y_1)e_k(y_2)e_l(y_1)e_l(y_2)\bar D_{k,l}\label{eq:CovV2}\\
  &\qquad+\mathcal O\Big(\frac1p\big(\Delta_n^{1/2}+\Delta_n^{\eta-1}p^{-1}\wedge1\big)\Big)\notag\,.
\end{align}
To evaluate the main term, we use $\lambda_k\Delta_n\ge 0$ for all $k$ and thus $\exp(-(\lambda_k+\lambda_l)\Delta_n)\le 1$ implying
\[
\bar D_{k,l}=\sum_{r=0}^{\infty}\frac{(1-e^{-\lambda_k\Delta_n})^2(1-e^{-\lambda_l\Delta_n})^2}{\lambda_k\lambda_l}e^{-r(\lambda_k+\lambda_l)\Delta_n}+2\frac{(1-e^{-\lambda_k\Delta_n})(1-e^{-\lambda_l\Delta_n})}{\lambda_k\lambda_l}\,.
\] 
In particular, each above addend has a product structure with respect to $k,l$. In case that $y_1\ne y_2$, the identity
\vspace*{-.4cm}
\begin{align}
  e_k(y_1)e_k(y_2)&=2\sin(\pi k y_1)\sin(\pi k y_2) e^{-(y_1+y_2)\,\vartheta_1/(2\vartheta_2)}\notag\\
  &=\big(\cos(\pi k(y_1-y_2))-\cos(\pi k (y_1+y_2))\big)e^{-(y_1+y_2)\,\vartheta_1/(2\vartheta_2)}\label{eq:offDiag}
\end{align}
can be used to prove that covariances from \eqref{eq:CovV2} vanish asymptotically at first order. More precisely, applying Lemma~\ref{lem:riemann}$(ii)$ to $f_r(x)=(1-e^{-x})^2e^{-rx}/x$, $x>0$, $f_r(0)=0$, \eqref{eq:CovV2} results, with $z,z'\in\{(y_1-y_2)/2,(y_1+y_2)/2\}$, in terms of the form
\begin{align}
  &\frac{\Delta_n}{p}\sum_{r\ge0}\sum_{k, l}\cos(2\pi kz)\cos(2\pi l z')f_r(\lambda_k\Delta_n)f_r(\lambda_l\Delta_n)\notag\\
  =&\frac{1}{p}\sum_{r\ge0}\Big(\Delta_n^{1/2}\sum_{k\ge 1}f_r(\lambda_k\Delta_n)\cos(2\pi kz)\Big)\Big(\Delta_n^{1/2}\sum_{l\ge 1}f_r(\lambda_l\Delta_n)\cos(2\pi l z')\Big)\notag\\
  =&\mathcal O\Big(\frac{\Delta_n^{1/2}(|y_1-y_2|^{-1}+\delta^{-1})}{p}\sum_{r\ge0}\Delta_n^{1/2}\sum_{k}|f_r(\lambda_k\Delta_n)|\Big)\notag\\
  =&\mathcal O\Big(\frac{\Delta_n^{1/2}(|y_1-y_2|^{-1}+\delta^{-1})}{p}\Big(\Delta_n^{1/2}\sum_{k\ge 1}\frac{1-e^{-\lambda_k\Delta_n}}{\lambda_k\Delta_n}\Big)\Big)
  =\mathcal O\Big(\frac{\Delta_n^{1/2}(|y_1-y_2|^{-1}+\delta^{-1})}{p}\Big)\label{eq:VarRem}.
\end{align}
We are thus left to consider for any $y\in[\delta,1-\delta]$ the variance.
Using again Lemma~\ref{lem:riemann}$(ii)$, one can approximate the terms $e_k^2(y)=2\sin^2(\pi k y)e^{-y\,\vartheta_1/\vartheta_2}\approx e^{-y\,\vartheta_1/\vartheta_2}$ for the variances with $y_1=y_2=y$, up to a term of order $\mathcal O(\delta^{-1}\sqrt{\Delta_n}/p)$, similar to the calculation \eqref{eq:VarRem}. Hence, all variances of the statistics \eqref{rvr} are equal at first order and we find
\begin{align*}
\var(V_{p,\Delta_n}(y))&=\frac{\sigma^4}{\Delta_np}\sum_{k\neq l}\bar D_{k,l}
+\mathcal O\Big(\frac1p\big(\Delta_n^{1/2}\delta^{-1}+\Delta_n^{\eta-1}p^{-1}\wedge1\big)\Big)\\
&=\frac{\sigma^4}{\Delta_np}\bigg(\sum_{r=0}^{\infty}\Big(\sum_{k\ge 1}\frac{(1-e^{-\lambda_k\Delta_n})^2e^{-r\lambda_k\Delta_n}}{\lambda_k}\Big)^2+2\Big(\sum_{k\ge 1}\frac{1-e^{-\lambda_k\Delta_n}}{\lambda_k}\Big)^2\bigg)\\
&\qquad+\mathcal O\Big(\frac1p\big(\Delta_n^{1/2}\delta^{-1}+\Delta_n^{\eta-1}p^{-1}\wedge1\big)\Big),
\end{align*}
using again that the diagonal terms $k=l$ are negligible. To evaluate the leading term, we now apply the integral approximations of the Riemann sums. For $r\in\N$ we have
\begin{align*}
 \lim_{\Delta_n\to0}\Delta_n^{1/2}\sum_{k\ge 1}\frac{(1-e^{-\lambda_k\Delta_n})^2e^{-r\lambda_k\Delta_n}}{\lambda_k\Delta_n}
 &=\frac{1}{\sqrt{\vartheta_2} \pi}\int_0^\infty \frac{(1-e^{-x^2})^2e^{-rx^2}}{x^2}\,\d x\\
 &=\frac{1}{\sqrt{\vartheta_2\pi} }\big(2\sqrt{r+1}-\sqrt{r+2}-\sqrt{r}\big)=:\sqrt{\frac{1}{\pi\vartheta_2}}I(r),\\
 \lim_{\Delta_n\to0}\Delta_n^{1/2}\sum_{k\ge 1}\frac{1-e^{-\lambda_k\Delta_n}}{\lambda_k\Delta_n}
 &=\frac{1}{\sqrt{\vartheta_2} \pi}\int_0^\infty \frac{1-e^{-x^2}}{x^2}\,\d x=\sqrt{\frac{1}{\pi\vartheta_2}}.
\end{align*}
Hence,
\begin{align*}
\var(V_{p,\Delta_n}(y)) &= \frac1p\frac{\sigma^4}{\pi\vartheta_2}\Big(\sum_{r=0}^{\infty} I(r)^2+2\Big)+\mathcal O\Big(\frac{\delta^{-1}\Delta_n^{1/2}}{p}+\frac{1}{p}\wedge\frac{\Delta_n^{\eta-1}}{p^2}\Big)\,.
\end{align*}
Since the series above converges, we obtain \eqref{varpropeq} with the constant
\begin{align}\label{eq:constantVariance}
\Gamma := \frac{1}{\pi}\sum_{r=0}^{\infty} I(r)^2+\frac{2}{\pi}
\end{align}
with $I(r)=2\sqrt{r+1}-\sqrt{r+2}-\sqrt{r}$.
To evaluate $\Gamma$ numerically, we rely on the series approximation
\begin{equation*}
  \sum_{r=0}^{\infty}I(r)^2\approx 0.357487\quad\text{and thus}\quad \Gamma\approx\frac{1}{\pi}\Big( 0.357487+2\Big)\approx 0.75\,.\hfill\qedhere
\end{equation*}
\end{proof}
Before we prove the central limit theorems we provide a result that will be used to verify the generalized mixing-type condition.
\begin{prop}
\label{prop:mixing}For $y\in(0,1)$, and natural numbers $1\le r<r+u\le v\le n$, and
\begin{align}
Q_{1}^r & =\sum_{i=1}^{r}(\Delta_{i}\tilde{X})^{2}(y),\qquad Q_{r+u}^v=\sum_{i=r+u}^{v}(\Delta_{i}\tilde{X})^{2}(y)\,,\label{eq:Qs}
\end{align}
there exists a constant $C,0<C<\infty$, such that for all $t\in\R$
\[
\big|\cov(e^{\iu t(Q_{1}^r-\E[Q_{1}^r])},e^{\iu t(Q_{r+u}^v-\E[Q_{r+u}^v])})\big|\le \frac{C\,t^{2}}{u^{3/4}}\,\var\big(Q_{1}^r\big)^{1/2}\var\big(Q_{r+u}^v\big)^{1/2}.
\]
\end{prop}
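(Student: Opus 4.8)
The plan is to exploit the exact characteristic function of a quadratic form in a Gaussian vector. Working in the stationary regime, set $\mathbf G=(\Delta_{1}\tilde X(y),\dots,\Delta_{r}\tilde X(y))^{\top}$ and $\mathbf H=(\Delta_{r+u}\tilde X(y),\dots,\Delta_{v}\tilde X(y))^{\top}$, so $(\mathbf G,\mathbf H)$ is centred Gaussian with joint covariance $\Sigma$, $Q_{1}^{r}=|\mathbf G|^{2}$ and $Q_{r+u}^{v}=|\mathbf H|^{2}$. Write $\tilde\Sigma=\operatorname{blkdiag}(\Sigma_{GG},\Sigma_{HH})$ and $E=\Sigma-\tilde\Sigma$ for the cross‑covariance block, whose entries are the autocovariances $\gamma_{j-i}:=\cov(\Delta_{i}\tilde X(y),\Delta_{j}\tilde X(y))$ with $j-i\ge u$. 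Since the centring contributes only unimodular phase factors, $\bigl|\cov\bigl(e^{\iu t(Q_{1}^{r}-\E Q_{1}^{r})},e^{\iu t(Q_{r+u}^{v}-\E Q_{r+u}^{v})}\bigr)\bigr|=\bigl|\det(I-2\iu t\Sigma)^{-1/2}-\det(I-2\iu t\tilde\Sigma)^{-1/2}\bigr|$ (principal branch; the precise convention for the covariance of complex variables only affects a sign in one block and is immaterial below). Factorising $I-2\iu t\Sigma=(I-2\iu t\tilde\Sigma)(I-M)$ with $M:=2\iu t(I-2\iu t\tilde\Sigma)^{-1}E$, and using $|\det(I-2\iu t\tilde\Sigma)^{-1/2}|\le 1$ because $\tilde\Sigma\succeq 0$ (no matter how degenerate $\tilde\Sigma$ is), reduces the task to $|\cov|\le|\det(I-M)^{-1/2}-1|$, always complemented by the trivial $|\cov|\le 2$.

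\textbf{Two regimes in $t$.} Since $\|(I-2\iu t\tilde\Sigma)^{-1}\|_{\mathrm{op}}\le 1$, one has $\|M\|_{\mathrm{op}}\le 2|t|\,\|E\|_{\mathrm{op}}$ and $\|M\|_{F}\le 2|t|\,\|E\|_{F}=2\sqrt2\,|t|\,\|\Sigma_{GH}\|_{F}$. If $\|M\|_{\mathrm{op}}\le 1/2$, expand $-\tfrac12\log\det(I-M)=\tfrac12\sum_{k\ge 1}\tr(M^{k})/k$: the key point is $\tr(M)=0$, because $(I-2\iu t\tilde\Sigma)^{-1}$ is block‑diagonal while $E$ has vanishing diagonal blocks, so the series starts at $k=2$; the Schatten‑norm bounds $|\tr(M^{k})|\le\|M\|_{\mathrm{op}}^{\,k-2}\|M\|_{F}^{2}$ give $\bigl|\tfrac12\log\det(I-M)\bigr|\le\|M\|_{F}^{2}$ and hence $|\det(I-M)^{-1/2}-1|\le e\|M\|_{F}^{2}$ as long as $\|M\|_{F}^{2}\le 1$. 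Combining this with $|\cov|\le 2$ when $\|M\|_{F}^{2}>1$, and with the Isserlis identity $\cov(Q_{1}^{r},Q_{r+u}^{v})=2\sum_{i\le r,\ r+u\le j\le v}\gamma_{j-i}^{2}=2\|\Sigma_{GH}\|_{F}^{2}$, yields $|\cov|\lesssim t^{2}\cov(Q_{1}^{r},Q_{r+u}^{v})$ throughout this regime. If instead $\|M\|_{\mathrm{op}}>1/2$, then $|t|\,\|E\|_{\mathrm{op}}>1/4$, so $t^{2}>\tfrac1{16}\|\Sigma_{GH}\|_{\mathrm{op}}^{-2}$, and I would simply use $|\cov|\le 2$ and verify that the right‑hand side of the claim already exceeds $2$ once the bounds below are inserted and $C$ is taken large.

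\textbf{Quantitative input.} Here I invoke Propositions~\ref{propExp}, \ref{propacf} and \ref{varprop}. Uniformly in $y\in[\delta,1-\delta]$ one has $|\gamma_{0}|\asymp\sqrt{\Delta_{n}}$ and $|\gamma_{\ell}|\lesssim\sqrt{\Delta_{n}}\,\ell^{-3/2}$ for $\ell\ge 1$ — either directly from the exact stationary series for $\gamma_{\ell}$ (bounding $e_{k}^{2}(y)$ and the $k$‑sum) or from Proposition~\ref{propacf} after noting $2\sqrt\ell-\sqrt{\ell-1}-\sqrt{\ell+1}\asymp\ell^{-3/2}$. Since every cross‑index obeys $j-i\ge u$, a tail summation gives $\cov(Q_{1}^{r},Q_{r+u}^{v})=2\|\Sigma_{GH}\|_{F}^{2}\lesssim\Delta_{n}\sum_{i=1}^{r}\sum_{m\ge r+u-i}m^{-3}\lesssim\Delta_{n}\sum_{s\ge u}s^{-2}\lesssim\Delta_{n}/u$, while the Toeplitz bound $\|\Sigma_{GH}\|_{\mathrm{op}}\le\sum_{\ell\ge u}|\gamma_{\ell}|\lesssim\sqrt{\Delta_{n}}\,u^{-1/2}$ controls the second regime. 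On the other side $\var(Q_{1}^{r})\ge 2r\gamma_{0}^{2}\gtrsim\Delta_{n}$ and likewise $\var(Q_{r+u}^{v})\gtrsim\Delta_{n}$ (each block is non‑empty; Proposition~\ref{varprop} in fact gives $\var(Q_{1}^{r})\asymp r\Delta_{n}$). Hence $\cov(Q_{1}^{r},Q_{r+u}^{v})\lesssim u^{-1}\var(Q_{1}^{r})^{1/2}\var(Q_{r+u}^{v})^{1/2}$ and $t^{2}>\tfrac1{16}\|\Sigma_{GH}\|_{\mathrm{op}}^{-2}\gtrsim u\,\Delta_{n}^{-1}$; substituting into the two cases and using $u^{-1}\le u^{-3/4}$ together with $u\ge 1$ closes the argument (with, in fact, a little room to spare).

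\textbf{Main obstacle.} The hard part is precisely the feature that makes the model non‑classical: the increment sequence $(\Delta_{i}\tilde X(y))_{i}$ is over‑differenced, its spectral density vanishes at frequency zero, so the Toeplitz covariance matrices $\Sigma_{GG},\Sigma_{HH}$ are badly conditioned, and a direct conditioning/regression of the far block on the near block would introduce an uncontrolled $\Sigma_{GG}^{-1}$. Routing everything through $\det(I-2\iu t\Sigma)$ circumvents this, since the only inverse appearing, $(I-2\iu t\tilde\Sigma)^{-1}$, has operator norm $\le 1$ regardless of how small the eigenvalues of $\tilde\Sigma$ are. The remaining care is bookkeeping: checking $\tr(M)=0$ from the block structure, controlling the log‑determinant tail uniformly in the growing dimension via Schatten‑norm inequalities, and — above all — handling all $t\in\R$, not merely $t$ in a shrinking neighbourhood of $0$ where a Taylor expansion of the two characteristic functions would already deliver $\cov\approx -t^{2}\cov(Q_{1}^{r},Q_{r+u}^{v})$.
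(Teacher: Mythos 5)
Your proposal is correct in substance and reaches the stated bound by a genuinely different route than the paper. The paper decomposes $Q_{r+u}^v=A_1+A_2$ by splitting each stochastic integral at time $r\Delta_n$ (the terms $D_1^{k,i},D_2^{k,i}$ in \eqref{eq:decompMixing}), so that $A_2$ is independent of $Q_1^r$; the elementary bound $|e^{\iu x}-1|\le|x|$ with Cauchy--Schwarz then reduces everything to $\E[\bar A_1^2]$, which is bounded by explicit second-moment computations over the Ornstein--Uhlenbeck spectral decomposition (the terms $T_1,T_2$), exploiting the damping factors $e^{-\lambda_k(u-1)\Delta_n}$ and the integral bound \eqref{intbound} to get $\E[\bar A_1^2]\lesssim\sigma^4p\Delta_n(u-1)^{-3/2}$, hence $u^{-3/4}$ after taking the square root. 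You instead work with the exact characteristic functions of Gaussian quadratic forms, factor out the block-diagonal part, and control the perturbation determinant via $\operatorname{tr}(M)=0$, Schatten-norm estimates and the autocovariance decay $|\gamma_\ell|\lesssim\sqrt{\Delta_n}\,\ell^{-3/2}$; combined with $\var(Q_1^r)\ge 2r\gamma_0^2\gtrsim r\Delta_n$ this even yields the stronger decay $u^{-1}$ in the small-$\|M\|$ regime and treats $u=1$ without a special case. Two points deserve care: the per-lag bound on $\gamma_\ell$ should indeed be taken from the exact stationary covariance series (your first option) rather than from Proposition~\ref{propacf}, whose remainders $r_{i,j}$ are controlled only after summation over $i,j$; and the identity $\det(I-2\iu t\Sigma)^{-1/2}=\det(I-2\iu t\tilde\Sigma)^{-1/2}\det(I-M)^{-1/2}$ needs a short branch-tracking argument (for instance interpolating along $\tilde\Sigma+sE$, $s\in[0,1]$, which stays positive semi-definite and keeps $\|sM\|_{\mathrm{op}}\le 1/2$), which you flag but do not carry out. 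As for what each approach buys: yours is model-light---only stationary Gaussianity and the covariance decay enter---and avoids the lengthy $T_1,T_2$ computations; the paper's argument stays entirely with real moment bounds, requires no complex-analytic bookkeeping, and its decomposition together with the bound on $\E[\bar A_1^2]$ is reused directly in Corollary~\ref{cor:mixing} for the spatial averages $\tilde\zeta_{n,i}$ with $m>1$ observation points, whereas your determinant argument would have to be adapted there to the cross-covariances between different $y_j$ (it does extend, since the $\tilde\zeta_{n,i}$ are again Gaussian quadratic forms, but that is an additional step).
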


\begin{proof}
For brevity we write $\bar{Y}=Y-\E[Y]$ for random variables $Y$. For any decomposition $Q_{r+u}^v=A_{1}+A_{2}$, with some $A_{2}$ which is independent of $Q_{1}^r$,
we have
\begin{align*}
\cov(e^{\iu t\bar{Q}_{1}^r},e^{\iu t\bar{Q}_{r+u}^v}) & =\cov(e^{\iu t\bar{Q}_{1}^r}-1,e^{\iu t\bar{Q}_{r+u}^v})\\
 & =\E\big[(e^{\iu t\bar{Q}_{1}^r}-1)e^{\iu t\bar{A}_{2}}(e^{\iu t\bar{A}_{1}}-1)\big]+\E\big[e^{\iu t\bar{Q}_{1}^r}-1\big]\,\E\big[e^{\iu t\bar{A}_{2}}(1-e^{\iu t\bar{A}_{1}})\big]\,.
\end{align*}

Using that $|e^{\iu x}-1|\le |x|$ and $|e^{\iu x}|=1$, $x\in\R$, we thus get
\begin{align}
\big|\cov(e^{\iu t\bar{Q}_{1}^r},e^{\iu t\bar{Q}_{r+u}^v})\big| & \le\E[|t\bar{Q}_{1}^r|\,|t\bar{A}_{1}|]+\E[|t\bar{Q}_{1}^r|]\E[|t\bar{A}_{1}|]\le2t^{2}\E[(\bar{Q}_{1}^r)^{2}]^{1/2}\E[\bar{A}_{1}^{2}]^{1/2}\,,\label{eq:boundingCharDiff}
\end{align}
by the Cauchy-Schwarz and Jensen's inequality. To obtain such a decomposition, we write for $i>r$
\begin{align}
\Delta_{i}\widetilde X(y) & =\sum_{k\ge1}D_{1}^{k,i}e_{k}(y)+\sum_{k\ge1}D_{2}^{k,i}e_{k}(y),\qquad\text{where}\label{eq:decompMixing}\\
D_{1}^{k,i} & :=\int_{-\infty}^{r\Delta_{n}}e^{-\lambda_{k}((i-1)\Delta_{n}-s)}(e^{-\lambda_{k}\Delta_{n}}-1)\sigma \,\d W_{s}^{k}\,,\nonumber \\
D_{2}^{k,i} & :=\int_{r\Delta_{n}}^{(i-1)\Delta_{n}}e^{-\lambda_{k}((i-1)\Delta_{n}-s)}(e^{-\lambda_{k}\Delta_{n}}-1)\sigma \,\d W_{s}^{k}+\int_{(i-1)\Delta}^{i\Delta_{n}}e^{-\lambda_{k}(i\Delta_{n}-s)}\sigma \d W_{s}^{k}\,.\nonumber 
\end{align}
Then we can decompose
\begin{align*}
Q_{r+u}^v=\sum_{i=r+u}^{v}(\Delta_{i}\widetilde X)^{2}(y)= & \underbrace{\sum_{i=r+u}^{v}\Big(\sum_{k\ge1}D_{1}^{k,i}e_{k}(y)\Big)^{2}+2\sum_{i=r+u}^{v}\Big(\sum_{k\ge1}D_{1}^{k,i}e_{k}(y)\Big)\Big(\sum_{k\ge1}D_{2}^{k,i}e_{k}(y)\Big)}_{=:A_{1}}\\
 & +\underbrace{\sum_{i=r+u}^{v}\Big(\sum_{k\ge1}D_{2}^{k,i}e_{k}(y)\Big)^{2}}_{=:A_{2}}.
\end{align*}
We bound $\E[\bar{A}_{1}^{2}]=\var(A_1)$ by
\begin{align*}
\E[\bar{A}_{1}^{2}]\le\E[A_{1}^{2}] & =\sum_{i,j=r+u}^{v}\Big(\E\Big[\Big(\sum_{k\ge1}D_{1}^{k,i}e_{k}(y)\Big)^{2}\Big(\sum_{k\ge1}D_{1}^{k,j}e_{k}(y)\Big)^{2}\Big]\\
 & \qquad\quad+4\,\E\Big[\Big(\sum_{k\ge1}D_{1}^{k,i}e_{k}(y)\Big)\Big(\sum_{k\ge1}D_{2}^{k,i}e_{k}(y)\Big)\Big(\sum_{k\ge1}D_{1}^{k,j}e_{k}(y)\Big)\Big(\sum_{k\ge1}D_{2}^{k,j}e_{k}(y)\Big)\Big]\Big)\\
 & =:T_{1}+T_{2}\,,
\end{align*}
where the cross term vanishes by independence of $D_{1}^{k,i}$ and
$D_{2}^{k,j}$. Set $p:=v-r-u+1$. Since $D_{1}^{k,i}=e^{-\lambda_{k}(i-r-1)\Delta_{n}}\tilde{B}_{r+1,k}$,
with $\tilde{B}_{r+1,k}$ from \eqref{statIncrement}, we have for $u\ge 2$ that
\begin{align*}
T_{1} & =\sum_{i,j=r+u}^{v}\E\Big[\Big(\sum_{k\ge1}e^{-\lambda_{k}(i-r-1)\Delta_{n}}\tilde{B}_{r+1,k}e_{k}(y)\Big)^{2}\Big(\sum_{k\ge1}e^{-\lambda_{k}(j-r-1)\Delta_{n}}\tilde{B}_{r+1,k}e_{k}(y)\Big)^{2}\Big]\\
 & =\sum_{i,j=r+u}^{v}\sum_{k\neq l}\E[\tilde{B}_{r+1,k}^{2}]\E[\tilde{B}_{r+1,l}^{2}]\Big(e^{-2\lambda_{k}(i-r-1)\Delta_{n}-2\lambda_{l}(j-r-1)\Delta_{n}}+2e^{-(\lambda_{k}+\lambda_{l})(i+j-2r-2)\Delta_{n}}\Big)\\
 & \qquad\times e_{k}^{2}(y)e_{l}^{2}(y)\,+\sum_{i,j=r+u}^{v}\sum_{k\ge 1}\E[\tilde{B}_{r+1,k}^{4}]e^{-2\lambda_{k}(i+j-2r-2)\Delta_{n}}e_{k}^{4}(y)\\
 & =\sigma^{4}\sum_{k\neq l}\frac{(1-e^{-\lambda_{k}\Delta_{n}})^{2}(1-e^{-\lambda_{l}\Delta_{n}})^{2}}{4\lambda_{k}\lambda_{l}}\Big(\Big(\sum_{i=r+u}^{v}e^{-2\lambda_{k}(i-r-1)\Delta_{n}}\Big)\Big(\sum_{j=r+u}^{v}e^{-2\lambda_{l}(j-r-1)\Delta_{n}}\Big)\\
 & \qquad+2\Big(\sum_{i=r+u}^{v}e^{-(\lambda_{k}+\lambda_{l})(i-r-1)\Delta_{n}}\Big)^{2}\Big)e_{k}^{2}(y)e_{l}^{2}(y)\\
 & \qquad+3\sigma^{4}\sum_{k\ge 1}\frac{(1-e^{-\lambda_{k}\Delta_{n}})^{4}}{4\lambda_{k}^{2}}\Big(\sum_{i=r+u}^{v}e^{-2\lambda_{k}(i-r-1)\Delta_{n}}\Big)^{2}e_{k}^{4}(y)\\
 & \le 4\sigma^{4}p\sum_{k\neq l}\frac{(1-e^{-\lambda_{k}\Delta_{n}})^{2}(1-e^{-\lambda_{l}\Delta_{n}})^{2}}{4\lambda_{k}\lambda_{l}}e^{-2(\lambda_{k}+\lambda_{l})(u-1)\Delta_{n}}\Big(\frac{1}{(1-e^{-2\lambda_{k}\Delta_n})}+\frac{2}{(1-e^{-(\lambda_{k}+\lambda_{l})\Delta_n})}\Big)\\
 & \qquad+12\sigma^{4}p\sum_{k\ge 1}\frac{(1-e^{-\lambda_{k}\Delta_{n}})^{4}}{4\lambda_{k}^{2}}\frac{e^{-4\lambda_{k}(u-1)\Delta_{n}}}{(1-e^{-2\lambda_{k}\Delta_n})}\\
 & \le 3\sigma^{4}p\Big(\sum_{k\ge1}e^{-2\lambda_{k}(u-1)\Delta_{n}}\frac{1-e^{-\lambda_{k}\Delta_{n}}}{\lambda_{k}}\Big)\Big(\sum_{k\ge1}e^{-2\lambda_{k}(u-1)\Delta_{n}}\frac{(1-e^{-\lambda_{k}\Delta_{n}})^{2}}{\lambda_{k}}\Big)\,,
\end{align*}
where we use \eqref{eq:covTilde} and $e_k^2(y)\le 2 $ for all $k$. For the last inequality we use \eqref{elin} three times, once with $p=e^{-\lambda_k\Delta_n}$ and $q=e^{-\lambda_l\Delta_n}$, and twice with $p=q=e^{-\lambda_k\Delta_n}$. With the two integral bounds
\begin{align}\label{intbound}\int_0^{\infty}\frac{(1-e^{-z^{2}})^{j}}{z^{2}}e^{-2(u-1)z^2}\,\d z=\mathcal{O}\big(u^{1/2-j}\big)\,, \text{ for } j=1,2\,,\end{align}
we derive that for some constant $C$:
\begin{equation} T_1\le C\sigma^4\frac{p\Delta_n}{(u-1)^{2}} \,.\label{eq:covCF1}\end{equation}
For the second term, the independence of $D_{1}^{k,i}$
and $D_{2}^{l,j}$ and the Cauchy-Schwarz inequality yield
\begin{align*}
T_{2} & =\sum_{i,j=r+u}^{v}\E\Big[\Big(\sum_{k\ge1}D_{1}^{k,i}e_{k}(y)\Big)\Big(\sum_{k\ge1}D_{1}^{k,j}e_{k}(y)\Big)\Big]\E\Big[\Big(\sum_{k\ge1}D_{2}^{k,i}e_{k}(y)\Big)\Big(\sum_{k\ge1}D_{2}^{k,j}e_{k}(y)\Big)\Big]\nonumber \\
 & =\sum_{i,j=r+u}^{v}\Big(\sum_{k\ge1}\E\big[D_{1}^{k,i}D_{1}^{k,j}\big]e_{k}^{2}(y)\Big)\Big(\sum_{k\ge1}\E\big[D_{2}^{k,i}D_{2}^{k,j}\big]e_{k}^{2}(y)\Big)\,.
\end{align*}
For $i\le j$, with \eqref{Scc}, \eqref{Sbc}, and a term similar to \eqref{Sbb}, we have that
\begin{align*}
\E\big[D_{2}^{k,i}D_{2}^{k,j}\big]= & \E\Big[\Big(\int_{r\Delta_{n}}^{(i-1)\Delta_{n}}e^{-\lambda_{k}((i-1)\Delta_{n}-s)}(e^{-\lambda_{k}\Delta_{n}}-1)\sigma \,\d W_{s}^{k}+C_{i,k}\Big)\\
 & \qquad\times\Big(\int_{r\Delta_{n}}^{(j-1)\Delta_{n}}e^{-\lambda_{k}((j-1)\Delta_{n}-s)}(e^{-\lambda_{k}\Delta_{n}}-1)\sigma \,\d W_{s}^{k}+C_{j,k}\Big)\Big]\\
= & \sigma^{2}(1-e^{-\lambda_{k}\Delta_{n}})^{2}e^{-\lambda_{k}(i+j-2)\Delta_{n}}\int_{r\Delta_{n}}^{(i-1)\Delta_{n}}e^{2\lambda_{k}s}\,\d s+\Sigma_{ij}^{BC,k}+\Sigma_{ij}^{C,k}\\
= & \sigma^{2}\frac{(1-e^{-\lambda_{k}\Delta_{n}})^{2}}{2\lambda_{k}}\big(e^{-\lambda_{k}(j-i)\Delta_{n}}-e^{-\lambda_{k}(i+j-2r-2)\Delta_{n}}\big)+\Sigma_{ij}^{BC,k}+\Sigma_{ij}^{C,k}\,,
\end{align*}
where the second addend is zero for $i=j$ and the last one for $i<j$. By \eqref{Sbc}, we especially observe for $i<j$ that 
\begin{align*}
\E\big[D_{2}^{k,i}D_{2}^{k,j}\big] & =\sigma^{2}\frac{1-e^{-\lambda_{k}\Delta_{n}}}{2\lambda_{k}}e^{-\lambda_{k}(j-i)\Delta_{n}}\big((1-e^{-\lambda_{k}\Delta_{n}})(1-e^{-2\lambda_{k}(i-r-1)\Delta_{n}})-(e^{\lambda_{k}\Delta}-e^{-\lambda_{k}\Delta})\big)\\
 & \le\sigma^{2}\frac{1-e^{-\lambda_{k}\Delta_{n}}}{2\lambda_{k}}e^{-\lambda_{k}(j-i)\Delta_{n}}\big(1-e^{\lambda_{k}\Delta}\big)\le0.
\end{align*}
Thereby and with \eqref{eq:covTilde}, we obtain for $u\ge 2$ and some constant $C$ that
\begin{align*}
T_{2} & =\sigma^{4}\sum_{i=r+u}^{v}\Big(\sum_{k\ge 1}e^{-2\lambda_{k}(i-r-1)\Delta_{n}}\frac{(1-e^{-\lambda_{k}\Delta_{n}})^{2}}{2\lambda_{k}}e_{k}^{2}(y)\Big)\\
 & \qquad\times\Big(\sum_{k\ge 1}\Big(\frac{(1-e^{-\lambda_{k}\Delta_{n}})^{2}}{2\lambda_{k}}\big(1-e^{-2\lambda_{k}(i-r-1)\Delta_{n}}\big)+\Sigma_{ii}^{C,k}\Big)e_{k}^{2}(y)\Big)\\
 & \qquad+2\sigma^{4}\sum_{r+u\le i<j\le v}\Big(\sum_{k\ge 1}e^{-\lambda_{k}(i+j-2r-2)\Delta_{n}}\frac{(1-e^{-\lambda_{k}\Delta_{n}})^{2}}{2\lambda_{k}}e_{k}^{2}(y)\Big)\Big(\sum_{k\ge1}\E\big[D_{2}^{k,i}D_{2}^{k,j}\big]e_{k}^2(y)\Big)\\
& \le4\sigma^{4}\sum_{i=r+u}^{v}\Big(\sum_{k\ge 1}e^{-2\lambda_{k}(i-r-1)\Delta_{n}}\frac{(1-e^{-\lambda_{k}\Delta_{n}})^{2}}{2\lambda_{k}}\Big)\Big(\sum_{k\ge 1}\frac{(1-e^{-\lambda_{k}\Delta_{n}})^2+1-e^{-2\lambda_{k}\Delta_{n}}}{2\lambda_{k}}\Big)\\
 & \le 2\sigma^{4}p\Big(\sum_{k\ge 1}e^{-2\lambda_{k}(u-1)\Delta_{n}}\frac{(1-e^{-\lambda_{k}\Delta_{n}})^{2}}{\lambda_{k}}\Big)\Big(\sum_{k\ge 1}\frac{(1-e^{-\lambda_{k}\Delta_{n}})}{\lambda_{k}}\Big)\\
 & \le C\,\frac{\sigma^{4}p\Delta_{n}}{(u-1)^{3/2}}.
\end{align*}
For the last inequality, we use \eqref{intbound} and the same application of Lemma 6.2 as to $I_1$ in \eqref{eq:expT1}. In combination with \eqref{eq:covCF1}, we
conclude for $u\ge 2$ that with some constant $C$:
\[
\E[\bar A_{1}^{2}]\le C \, \frac{\sigma^{4}p\Delta_{n}}{(u-1)^{3/2}}.
\]
We finally note that
\begin{align*}
\hspace*{-.5cm}\E\big[\big(Q_{r+u}^v\big)^{2}\big]=\E\Big[\Big(\sum_{i=r+u}^{v}(\Delta_{i}\tilde{X})^{2}(y)\Big)^{2}\Big]\ge\sum_{i=r+u}^{v}\E\big[(\Delta_{i}\tilde{X})^{4}(y)\big]\ge C''\sigma^{4}p\Delta_n
\end{align*}
for some constant $C''>0$. Combining these estimates with \eqref{eq:boundingCharDiff}, and a simple bound for the case $u=1$, completes the proof.
\end{proof}

Based on the two previous propositions, we now prove the central limit theorems.

\begin{proof}[Proof of Theorem \ref{cltm1}]
For $m=1$, the random variables $\tilde\zeta_{n,i}$ from \eqref{zetatilde} simplify to
\begin{align*}
  \widetilde \zeta_{n,i}=\sqrt{\theta_2\pi}({\Delta_i \widetilde X})^2(y)\exp(y\,\theta_1/\theta_2)\,,
\end{align*}
such that $\hat\sigma_y^2=\tfrac{1}{n\sqrt{\Delta_n}}\sum_{i=1}^n\tilde \zeta_{n,i}$. By Proposition~\ref{propExp} we have $\E[\sum_{i=1}^n\tilde \zeta_{n,i}]=\sqrt{n}\E[\hat \sigma_y^2]=\sqrt{n}(\sigma^2+\KLEINO(1))$. Therefore, we are left to establish a central limit theorem for the triangular array \[Z_{n,i}=\widetilde \zeta_{n,i}-\E[\widetilde \zeta_{n,i}]\,.\] 

According to \citet[Thm. B]{peligradUtev1997}, the weak convergence $\sum_{i=1}^nZ_{n,i}\overset{d}{\rightarrow}\mathcal N(0,v^2)$ with asymptotic variance $v^2=\lim_{n\to\infty}\var(\sum_{i=1}^nZ_{n,i})<\infty$ holds under the following sufficient conditions:
\begin{subequations}\begin{gather}
  \var\Big(\sum_{i=a}^b Z_{n,i}\Big)\le C\sum_{i=a}^b\var(Z_{n,i})\quad\text{for all }1\le a\le b\le n\,,\label{eq:condVariances}\\
  \limsup_{n\rightarrow\infty}\sum_{i=1}^n\E[Z_{n,i}^2]<\infty,\label{sumVariances}\\
  \sum_{i=1}^n\E[Z_{n,i}^2\1_{|Z_{n,i}|>\epsilon}]\rightarrow 0~\text{as}~n\rightarrow\infty\,,\quad\text{for all }\epsilon>0\,,\label{eq:lindeberg}\\
  \cov\Big(e^{\iu t\sum_{i=a}^bZ_{n,i}},e^{\iu t\sum_{i=b+u}^cZ_{n,i}}\Big)\le \rho_t(u)\sum_{i=a}^c\var(Z_{n,i})\,,\label{eq:genMixing}\\
  \hspace*{18em}\text{for all }1\le a\le b<b+u\le c\le n, \text{and }t\in\R\notag\,,
\end{gather}\end{subequations}
with a universal constant $C>0$ and a function $\rho_t(u)\ge0$ satisfying $\sum_{j\ge1}\rho_t(2^j)<\infty$.\\
Since $\sum_{i=1}^n \widetilde \zeta_{n,i}=n\sqrt{\Delta_n}V_{n,\Delta_n}(y)\sqrt{\theta_2\pi}$, by Proposition~\ref{varprop} the asymptotic variance is 
\[\lim_{n\to\infty}\theta_2\pi\Delta_n n^2\var(V_{n,\Delta_n}(y))=\Gamma \pi\sigma^4=v^2\,.\]
Proposition~\ref{varprop} implies that $\var(\sum_{i=a}^b Z_{n,i})=\mathcal O(\Delta_n (b-a+1))$. Since $\Delta_i \widetilde X(y)$ are centered normally distributed random variables with $\var(\Delta_i \widetilde X(y))\propto \Delta_n^{1/2}$ uniformly in $i$ owing to Proposition~\ref{propExp}, there exists a constant $c>0$, such that $\sum_{i=a}^b\var(Z_{n,i})\ge c(b-a+1)\Delta_n$. Thus, we have verified Condition~\eqref{eq:condVariances}. From Proposition~\ref{varprop} we also obtain that $\sum_{i=1}^n\var(Z_{n,i})=\mathcal O(\Delta _n n)$, which grants Condition \eqref{sumVariances}. The generalized mixing-type Condition \eqref{eq:genMixing} is verified by Proposition~\ref{prop:mixing}.

It remains to establish the Lindeberg Condition \eqref{eq:lindeberg} which is by Markov's inequality implied by a Lyapunov condition. Using again that $\Delta_i \widetilde X(y)$ are centered normally distributed with $\var(\Delta_i \widetilde X(y))=\mathcal O(\Delta_n^{1/2})$ uniformly in $i$, we conclude $\E\big[(\Delta_i \widetilde X)^8(y)\big]=\mathcal O(\Delta_n^2)$ and thus $\E\big[\widetilde\zeta_{n,i}^4\big]=\mathcal{O}(\Delta_n^2)$ uniformly in $i$. Therefore,
\begin{equation*}
  \sum_{i=1}^n \E\big[Z_{n,i}^4\big]=\mathcal O(n\Delta_n^2)=\KLEINO(1)~\mbox{as}~n\rightarrow\infty\,.\hfill\qedhere 
\end{equation*}
\end{proof}

For the case of $m>1$ spatial observations we need the following corollary from Proposition~\ref{prop:mixing}:
\begin{cor}\label{cor:mixing}
Under the conditions of Theorem \ref{cltm2}, for natural numbers $1\le r<r+u\le v\le n$,
and
\begin{align*}
\tilde Q_{1}^r & =\sum_{i=1}^{r}\tilde{\zeta}_{n,i},\qquad \tilde Q_{r+u}^v=\sum_{i=r+u}^{v}\tilde{\zeta}_{n,i}\,,
\end{align*}
with $\tilde{\zeta}_{n,i}$ from \eqref{zetatilde}, there exists a constant $C$, $0<C<\infty$, such that for all $t\in\R$ 
\[
\big|\cov(e^{\iu t (\tilde Q_{1}^r-\E[\tilde Q_{1}^r])},e^{\iu t(\tilde Q_{r+u}^v-\E[\tilde Q_{r+u}^v])})\big|\le \frac{C t^2}{u^{3/4}}\var\big(\tilde Q_{1}^r\big)^{1/2}\var\big(\tilde Q_{r+u}^v\big)^{1/2}\,.
\]
\end{cor}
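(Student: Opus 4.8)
The plan is to repeat the argument of Proposition~\ref{prop:mixing}, carrying the spatial sum along. The point is that the splitting $\Delta_{i}\widetilde X(y)=\sum_{k\ge1}D_{1}^{k,i}e_{k}(y)+\sum_{k\ge1}D_{2}^{k,i}e_{k}(y)$ from \eqref{eq:decompMixing} holds simultaneously for all spatial points, because $D_{1}^{k,i},D_{2}^{k,i}$ do not depend on $y$. With $\kappa_{j}:=\sqrt{\theta_{2}\pi}\,e^{y_{j}\theta_{1}/\theta_{2}}$, which is bounded uniformly since $y_{j}\in[\delta,1-\delta]$, this gives $\widetilde\zeta_{n,i}=a_{1,i}+a_{2,i}$ with $a_{2,i}=m^{-1/2}\sum_{j=1}^{m}\kappa_{j}\big(\sum_{k\ge1}D_{2}^{k,i}e_{k}(y_{j})\big)^{2}$. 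For $i>r$ the $a_{2,i}$ depend only on Brownian increments after time $r\Delta_{n}$, hence are independent of $\mathcal F_{r\Delta_{n}}\supseteq\sigma(\widetilde Q_{1}^{r})$, so inequality \eqref{eq:boundingCharDiff} applies verbatim with $A_{2}=\sum_{i=r+u}^{v}a_{2,i}$ and $A_{1}=\sum_{i=r+u}^{v}a_{1,i}$, yielding
\[\big|\cov\big(e^{\iu t(\widetilde Q_{1}^{r}-\E[\widetilde Q_{1}^{r}])},e^{\iu t(\widetilde Q_{r+u}^{v}-\E[\widetilde Q_{r+u}^{v}])}\big)\big|\le2t^{2}\,\var(\widetilde Q_{1}^{r})^{1/2}\var(A_{1})^{1/2}.\]
Since \eqref{eq:boundingCharDiff} also provides $|\cov(e^{\iu t\bar X},e^{\iu t\bar Y})|\le2t^{2}\var(X)^{1/2}\var(Y)^{1/2}$, the case $u=1$ is immediate; so assume $u\ge2$ and set $p:=v-r-u+1$. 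It then remains to show $\var(A_{1})=\mathcal O(\sigma^{4}p\Delta_{n}u^{-3/2})$ together with $\var(\widetilde Q_{r+u}^{v})\ge c\,\sigma^{4}p\Delta_{n}$ for some $c>0$.

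For $\var(A_{1})$, write $A_{1}=m^{-1/2}\sum_{j=1}^{m}\kappa_{j}A_{1}^{(j)}$ with $A_{1}^{(j)}$ the quantity $A_{1}$ from the proof of Proposition~\ref{prop:mixing} formed at the single point $y_{j}$, so that $\var(A_{1})=m^{-1}\sum_{j,j'}\kappa_{j}\kappa_{j'}\cov(A_{1}^{(j)},A_{1}^{(j')})$. On the diagonal $j=j'$, the bounds inside the proof of Proposition~\ref{prop:mixing}---namely \eqref{eq:covCF1} and the estimate on $T_{2}$, both of which use only $e_{k}^{2}(y_{j})\le2$ and are therefore uniform in $y_{j}$---give $\var(A_{1}^{(j)})\le\E[(A_{1}^{(j)})^{2}]=\mathcal O(\sigma^{4}p\Delta_{n}u^{-3/2})$, and since $m^{-1}\sum_{j}\kappa_{j}^{2}=\mathcal O(1)$ the diagonal contributes $\mathcal O(\sigma^{4}p\Delta_{n}u^{-3/2})$. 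On the off-diagonal $j\neq j'$, expanding $\cov(A_{1}^{(j)},A_{1}^{(j')})$ by Isserlis' theorem reproduces the building blocks of the proof of Proposition~\ref{prop:mixing} with every factor $e_{k}^{2}(y_{j})$ replaced by $e_{k}(y_{j})e_{k}(y_{j'})$; by the product-to-sum identity \eqref{eq:offDiag} this introduces the oscillating terms $\cos(\pi k(y_{j}-y_{j'}))$ and $\cos(\pi k(y_{j}+y_{j'}))$, and the corresponding frequency sums are exactly of the type estimated in Lemma~\ref{lem:riemann}(ii) and already controlled in the proof of Proposition~\ref{varprop} (cf.\ \eqref{eq:VarRem}). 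This yields, relative to the diagonal, an extra factor of order $\Delta_{n}^{1/2}\big(|y_{j}-y_{j'}|^{-1}+\delta^{-1}\big)$; since $\min_{j\neq j'}|y_{j}-y_{j'}|$ is bounded below by a constant times $1/m$ by Assumption~\ref{Obs}, one has $\sum_{j\neq j'}(|y_{j}-y_{j'}|^{-1}+\delta^{-1})=\mathcal O(m^{2}\log m)$, so the off-diagonal sum is of order $m\log m\,\Delta_{n}^{1/2}$ times $\sigma^{4}p\Delta_{n}u^{-3/2}$, which is negligible because $m=\mathcal O(n^{\rho})$ with $\rho<1/2$. Hence $\var(A_{1})=\mathcal O(\sigma^{4}p\Delta_{n}u^{-3/2})$.

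For the lower bound on $\var(\widetilde Q_{r+u}^{v})$, note that by Isserlis' theorem $\cov(\widetilde\zeta_{n,i},\widetilde\zeta_{n,i'})=\tfrac{2\theta_{2}\pi}{m}\sum_{j,j'}e^{(y_{j}+y_{j'})\theta_{1}/\theta_{2}}\E\big[\Delta_{i}\widetilde X(y_{j})\Delta_{i'}\widetilde X(y_{j'})\big]^{2}\ge0$, so $\var(\widetilde Q_{r+u}^{v})\ge\sum_{i=r+u}^{v}\var(\widetilde\zeta_{n,i})$; and $\var(\widetilde\zeta_{n,i})=2\sigma^{4}\Delta_{n}(1+\KLEINO(1))$ uniformly in $i$ (the diagonal $j=j'$ contribution is $2\sigma^{4}\Delta_{n}(1+\KLEINO(1))$ by the stationary version of Proposition~\ref{propExp}, the off-diagonal ones being non-negative). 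Thus $\var(\widetilde Q_{r+u}^{v})\ge c\,\sigma^{4}p\Delta_{n}$ for $n$ large, uniformly in $r,u,v$, whence $\var(A_{1})^{1/2}\le C\,u^{-3/4}\var(\widetilde Q_{r+u}^{v})^{1/2}$; substituting into the displayed inequality gives the assertion.

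The hard part is the off-diagonal spatial term: the crude bound $|e_{k}(y_{j})e_{k}(y_{j'})|\le2$ loses a full factor $m$ (there are $\mathcal O(m^{2})$ pairs against the normalisation $m^{-1}$), which must be recovered from the near-orthogonality of the eigenfunctions over the spatial grid---essentially the quadrature error of $\sum_{j}\kappa_{j}e_{k}(y_{j})e_{k'}(y_{j})$ approximating $\langle e_{k},e_{k'}\rangle_{\theta}=0$ for $k\neq k'$---together with the effective frequency cut-off $k=\mathcal O((u\Delta_{n})^{-1/2})$ forced by the weights $e^{-2\lambda_{k}(u-1)\Delta_{n}}$. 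The delicate regime is $u=\mathcal O(n/m^{2})$, where this cut-off exceeds the grid resolution; there one argues more carefully, balancing against $\var(\widetilde Q_{r+u}^{v})\asymp p\Delta_{n}$ and using $\rho<1/2$. Verifying the hypotheses of Lemma~\ref{lem:riemann}(ii) for the relevant weight functions---with $L^{1}$-constants growing only polynomially in the time-lag, so that the summation over the two time indices converges---is the technical core.
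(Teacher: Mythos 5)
Your argument mirrors the paper's proof: the same $D_1/D_2$ splitting with inequality \eqref{eq:boundingCharDiff}, diagonal terms controlled by the bounds of Proposition~\ref{prop:mixing} (uniform in $y_j$), and spatial off-diagonal covariances handled via \eqref{eq:offDiag} and Lemma~\ref{lem:riemann}(ii) exactly as in Proposition~\ref{varprop}, yielding the same extra factor of order $\Delta_n^{1/2}m\log m$, negligible since $m=\mathcal O(n^{\rho})$ with $\rho<1/2$ (this is the paper's display \eqref{spatialcov}). Your lower bound $\var(\tilde Q_{r+u}^v)\gtrsim p\Delta_n$ via non-negativity of the squared Gaussian covariances in Isserlis' expansion is a slightly cleaner justification than the paper's appeal to $\E[(\tilde Q_{r+u}^v)^2]\ge C''\sigma^4p\Delta_n$, but otherwise the route is identical.
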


\begin{proof}
With $D_{1}^{k,i}$ and $D_{2}^{k,j}$ from (\ref{eq:decompMixing})
we decompose
\begin{align*}
\tilde Q_{r+u}^v= & \sum_{i=r+u}^{v}\tilde{\zeta}_{n,i}=\frac{\sqrt{\theta_{2}\pi}}{\sqrt{m}}\sum_{j=1}^{m}\Big(A_{1}(y_{j})+A_{2}(y_{j})\Big)e^{y_{j}\theta_{1}/\theta_{2}}\qquad\text{with}\\
A_{1}(y):= & \sum_{i=r+u}^{v}\Big(\sum_{k\ge1}D_{1}^{k,i}e_{k}(y)\Big)^{2}+2\sum_{i=r+u}^{v}\Big(\sum_{k\ge1}D_{1}^{k,i}e_{k}(y)\Big)\Big(\sum_{k\ge1}D_{2}^{k,i}e_{k}(y)\Big),\\
A_{2}(y):= & \sum_{i=r+u}^{v}\Big(\sum_{k\ge1}D_{2}^{k,i}e_{k}(y)\Big)^{2}
\end{align*}
and set $p:=v-r-u+1$.
In view of (\ref{eq:boundingCharDiff}), we thus have to estimate
\begin{align*}
\var\Big(m^{-1/2}\sum_{j=1}^{m}A_{1}(y_{j})\Big) & =\frac{1}{m}\sum_{j=1}^{m}\var\big(A_{1}(y_{j})\big)+\frac{1}{m}\sum_{j\neq j'}\cov\big(A_{1}(y_{j}),A_{1}(y_{j'})\big).
\end{align*}
The covariances can be handled using \eqref{eq:offDiag} together with Lemma \ref{lem:riemann}$(ii)$ as in the proof of Proposition \ref{varprop}. By an analogous computation to the bounds for $T_1$ and $T_2$ as in the proof of Proposition \ref{prop:mixing}, using that $m\cdot\min_{j=2,\dots,m}|y_j-y_{j-1}|$ is bounded from below, we thus obtain for $u\ge 2$ that
\begin{align}\label{spatialcov}
\frac{1}{m}\sum_{j\neq j'}\cov\big(A_{1}(y_{j}),A_{1}(y_{j'})\big)=\mathcal{O}\bigg(\frac{\Delta_n^{3/2}p}{(u-1)^{3/2}}\frac{1}{m}\sum_{j\neq j'}\frac{1}{|y_{j}-y_{j'}|}\bigg)=\mathcal{O}\bigg(\frac{\Delta_{n}^{3/2}p}{(u-1)^{3/2}}m\log m\bigg)\,.
\end{align}
On the other hand, Proposition~\ref{prop:mixing} yields, with a constant $C$, that
\[
\frac{1}{m}\sum_{j=1}^{m}\var\big(A_{1}(y_{j})\big)\le \frac{C \sigma^4\,p\Delta_n}{(u-1)^{3/2}}\,.
\]
Using that $\E\big[\big(\tilde Q_{r+u}^v\big)^{2}\big]\ge C''\sigma^{4} p\Delta_n$ completes the proof.
\end{proof}

\begin{proof}[Proof of Theorem \ref{cltm2}]
Similar to the proof of Theorem~\ref{cltm1} we verify the Conditions \eqref{eq:condVariances} to \eqref{eq:genMixing} for the triangular array $Z_{n,i}=\tilde\zeta_{n,i}-\E[\tilde\zeta_{n,i}]$, with $\tilde\zeta_{n,i}$ from \eqref{zetatilde}, where $m=m_n$ is a sequence satisfying $m_n=\mathcal O(n^{\rho})$ for some $\rho\in(0,1/2)$ according to Assumption \ref{Obs}.\\ From the proof of Proposition~\ref{varprop}, using that $m\cdot\min_{j=2,\dots,m}|y_j-y_{j-1}|$ is bounded from below by Assumption \ref{Obs}, we deduce for any $0\le a<b\le n$, with a similar estimate to \eqref{spatialcov}, that
\begin{align*}
  \var\Big(\sum_{i=a}^b\widetilde \zeta_{n,i}\Big)
  &\hspace*{-0.05cm}=\hspace*{-0.05cm}\Delta_n(b\hspace*{-0.04cm}-\hspace*{-0.04cm}a\hspace*{-0.04cm}+\hspace*{-0.04cm}1)\Gamma\sigma^4\pi\big(1+\hspace*{-0.05cm}\mathcal O(1\wedge \Delta_n^{-1/4}/(b\hspace*{-0.04cm}-\hspace*{-0.04cm}a\hspace*{-0.04cm}+\hspace*{-0.04cm}1))\big)\hspace*{-0.05cm}+\hspace*{-0.05cm}\mathcal O\big((b\hspace*{-0.04cm}-\hspace*{-0.04cm}a\hspace*{-0.04cm}+\hspace*{-0.04cm}1)\Delta_n^{3/2} m\log m\big)\\
  &=\mathcal O\big(\Delta_n(b\hspace*{-0.04cm}-\hspace*{-0.04cm}a\hspace*{-0.04cm}+\hspace*{-0.04cm}1)\big)\,.
\end{align*}
We conclude in particular Conditions \eqref{eq:condVariances} and \eqref{sumVariances}. By the Cauchy-Schwarz inequality we moreover have that
\begin{align}\notag
  \E\big[\widetilde \zeta_{n,i}^4\big]&=\frac{\pi^2\theta_2^2}{m^2}\sum_{j_1,\dots,j_4=1}^me^{(y_{j_1}+\dots+y_{j_4})\theta_1/\theta_2}\E\big[(\Delta_i \widetilde X)^2(y_{j_1})\cdots(\Delta_i \widetilde X)^2(y_{j_4})\big]\\
  &\notag\le \frac{\pi^2\theta_2^2}{m^2}\sum_{j_1,\dots,j_4=1}^me^{(y_{j_1}+\dots+y_{j_4})\theta_1/\theta_2}\E\big[(\Delta_i \widetilde X)^8(y_{j_1})\big]^{1/4}\cdots\E\big[(\Delta_i \widetilde X)^8(y_{j_4})\big]^{1/4}\\
  &\label{eq:4thMoments}\le \pi^2\theta_2^2m^2e^{4\theta_1/\theta_2}\max_{y\in\{y_1,\dots,y_m\}}\E\big[(\Delta_i \widetilde X)^8(y)\big]\,.
\end{align}
Using as in the proof of Theorem \ref{cltm1} that $\E\big[(\Delta_i \widetilde X)^8(y)\big]=\mathcal O(\Delta_n^2)$, we obtain that $\sum_{i=1}^n\E\big[\widetilde \zeta_{n,i}^4\big]=\mathcal{O}(m^2\Delta_n)=\KLEINO(1)$, such that a Lyapunov condition is satisfied implying the Lindeberg Condition \eqref{eq:lindeberg}. Finally, the mixing-type Condition \eqref{eq:genMixing} is verified by Corollary~\ref{cor:mixing}.
\end{proof}

\begin{proof}[Proof of Proposition \ref{propfclt}]
First, we compute the expectation of the quarticity estimator. We have that $\E[\tilde\sigma_{n,m}^4]=\sigma^4+\KLEINO(1)$, since for any $y\in[\delta,1-\delta]$
\begin{align*}
&\sum_{i=1}^n\pi\theta_2\,\exp(2y\,\theta_1/\theta_2)\E\big[(\Delta_i X)^4(y)\big]\\
&\quad=\pi\theta_2\,\exp(2y\,\theta_1/\theta_2)\sum_{i=1}^n\sum_{k,l}3\,\E[(\Delta_i x_k)^2]\E[(\Delta_i x_l)^2]\,e_k^2(y)e_l^2(y)\big(1+\KLEINO(1)\big)\\
&\quad=3\,\pi\theta_2\Delta_n\sum_{i=1}^n\Big(\Delta_n^{-1/2}\sum_{k\ge 1}\big(\Sigma_{ii}^{B,k}+\Sigma_{ii}^{C,k}\big)\Big)^2\big(1+\KLEINO(1)\big)\\
&\quad=3\,n\pi\theta_2\Delta_n\Big(\Delta_n^{-1/2}\sigma^2\sum_{k\ge 1}\lambda_k^{-1}\big(1-\exp(-\lambda_k\Delta_n)\big)\Big)^2\big(1+\KLEINO(1)\big)\\
&\quad=3\,\sigma^4\big(1+\KLEINO(1)\big)\,.
\end{align*}
We use that the fourth-moment terms of $A_{i,k}$ from \eqref{eq:ABC} are negligible under the condition that\\ $\sup_k\lambda_k\E[\langle \xi,e_k\rangle_{\theta}^4]<\infty$.
Recall the statistic $\zeta_{n,i}$ from \eqref{zetaM}.
In order to establish consistency of the quarticity estimator, observe that for $j>i$ with generic constant $C$: 
\begin{align*}
  &\cov\big(\zeta_{n,j}^2,\zeta_{n,i}^2\big)=\frac{\pi^2\theta_2^2}{m^2}\sum_{u_1,\dots,u_4=1}^me^{(y_{u_1}+\dots+y_{u_4})\theta_1/\theta_2}\\
	&\hspace*{3.5cm}\times \cov\big((\Delta_i X)^2(y_{u_1})(\Delta_i X)^2(y_{u_2}),(\Delta_j X)^2(y_{u_3})(\Delta_j X)^2(y_{u_4})\big)\\
	&\quad\le\pi^2\theta_2^2\,e^{4\vartheta_1/\vartheta_2}\,m^2\hspace*{-.15cm}\max_{u_1,u_2,u_3,u_4\in\{1,\dots,m\}}\big|\cov\big((\Delta_i X)^2(y_{u_1})(\Delta_i X)^2(y_{u_2}),(\Delta_j X)^2(y_{u_3})(\Delta_j X)^2(y_{u_4})\big)\big|\\
  &\quad\le Cm^2\sum_{k,l,p,q}\cov\big(\Delta_i x_k\Delta_i x_l \Delta_i x_p\Delta_i x_q,\Delta_j x_k\Delta_j x_l \Delta_j x_p\Delta_j x_q\big)\\
  &\quad= Cm^2\sum_{k,l,p,q}\big(\Sigma_{ij}^{B,k}+\Sigma_{ij}^{BC,k}\big)\big(\Sigma_{ij}^{B,l}+\Sigma_{ij}^{BC,l}\big)\big(\Sigma_{ij}^{B,p}+\Sigma_{ij}^{BC,p}\big)\big(\Sigma_{ij}^{B,q}+\Sigma_{ij}^{BC,q}\big) \big(1+\KLEINO(1)\big)\\
  &\quad= Cm^2\Delta_n^2 \Big(\Delta_n^{-1/2}\sum_{k\ge 1}\big(\Sigma_{ij}^{B,k}+\Sigma_{ij}^{BC,k}\big)\Big)^4\, \big(1+\KLEINO(1)\big)\\
  &\quad= \mathcal O\Big(m^2\Delta_n^2 \sigma^8\Big(\int_0^{\infty}z^{-2}(1-e^{-2z^2})^2 \,e^{-z^2(j-i-1)}\,\d z\Big)^4\Big)\\
  &\quad= \mathcal O\Big(m^2\Delta_n^2 \sigma^8\big(2\sqrt{j-i}-\sqrt{j-i-1}-\sqrt{j-i+1}\big)^4\Big)\\
  &\quad= \mathcal O\big(m^2\Delta_n^2 \sigma^8 (j-i-1)^{-6}\big)\,.
\end{align*}
Here, we use that the terms with fourth and eighth moments of $A_{i,k}$ are negligible under the condition that $\sup_k\lambda_k\E[\langle \xi,e_k\rangle_{\theta}^4]<\infty$ and $\sup_k\lambda_k\E[\langle \xi,e_k\rangle_{\theta}^8]<\infty$. With $\E\big[\zeta_{n,i}^4\big]=\mathcal{O}(m^2\Delta_n^2)$ and the order of $\cov\big(\zeta_{n,j}^2,\zeta_{n,i}^2\big)$ for $i\ne j$, we obtain that 
\begin{align*}
\var\Big(\sum_{i=1}^n\zeta_{n,i}^2\Big)&\le \sum_{i=1}^n\E\big[\zeta_{n,i}^4\big]+\sum_{i\ne j}\cov\big(\zeta_{n,j}^2,\zeta_{n,i}^2\big)\\
&\le \mathcal O\Big(m^2\Delta_n+\sum_{i\ne j} m^2\Delta_n^2 |j-i-1|^{-6}\Big)= \mathcal{O}\Big( m^2\Delta_n\Big(1+\Delta_n\sum_{i\ne j}|j-i-1|^{-6}\Big)\Big)\\
&=\mathcal O(m^2\Delta_n)\,.\end{align*}
Under Assumption \ref{Obs} we thus have $\tilde\sigma_{n,m}^4 \stackrel{\P}{\rightarrow}\sigma^4$. Slutsky's lemma implies the normalized central limit theorem \eqref{fclt}.
\end{proof}

\subsection{Proof of Theorem~\ref{thm:leastSquares}}
\begin{prop}\label{apprprop}
  On Assumptions \ref{Obs}, \ref{cond} and \ref{assvola}, it holds for $m\in\N$ fix and, when $\sqrt{m_n}\Delta_n^{\alpha'-1/2}\to 0$ for some $1/2<\alpha'<\alpha$, for $m=m_n\to\infty$, that
  \begin{align} \label{exptv}\E\Big[\frac{1}{mn\sqrt{\Delta_n}}\sum_{j=1}^m\sum_{i=1}^n(\Delta_i X)^2(y_j)\Big]=e^{-\varkappa y}\frac{IV_0}{\sqrt{\pi}} +\KLEINO\Big(\frac{1}{\sqrt{mn}}\Big)\,. \end{align}
\end{prop}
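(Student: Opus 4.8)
The plan is to follow the proof of Proposition~\ref{propExp} almost verbatim; the only genuinely new ingredient is the control of the error committed when, inside the second--moment integrals, the time--varying volatility $\sigma_s$ is frozen at the left end point $t_{i-1}=(i-1)\Delta_n$ of the $i$--th time increment. First I would decompose $\Delta_i X(y_j)=\sum_{k\ge1}(A_{i,k}+B_{i,k}+C_{i,k})e_k(y_j)$ as in \eqref{eq:ABC}, with $\sigma$ in the stochastic integrals defining $B_{i,k},C_{i,k}$ now being the function $\sigma_s$, and observe that nothing changes in the bookkeeping of which cross terms vanish: $\E[A_{i,k}B_{i,k}]=\E[A_{i,k}C_{i,k}]=\E[B_{i,k}C_{i,k}]=0$ and all $k\neq l$ terms except those built from $A_{i,k}A_{i,l}$ disappear by independence of the $W^k$. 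Hence $\E[(\Delta_i X)^2(y_j)]=\sum_{k\ge1}\big(\E[B_{i,k}^2]+\E[C_{i,k}^2]\big)e_k^2(y_j)+r_{n,i}$ with exactly the remainder $r_{n,i}=\E\big[\big(\sum_{k\ge1}A_{i,k}e_k(y_j)\big)^2\big]$ from Lemma~\ref{lem1}, which involves only $\xi$ and hence still satisfies $\sup_i|r_{n,i}|=\mathcal O(\Delta_n^{1/2})$ and $\sum_{i=1}^n r_{n,i}=\mathcal O(\Delta_n^{1/2})$; here $\E[B_{i,k}^2]$ and $\E[C_{i,k}^2]$ are the integrals in \eqref{2mb} and \eqref{2mc} with the constant $\sigma^2$ under the integral sign replaced by $\sigma_s^2$.

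The core step is then to freeze $\sigma_s^2$ at $\sigma_{t_{i-1}}^2$ in those two integrals. Since $\sigma^2$ inherits $\alpha$--Hölder continuity on $[0,1]$ from Assumption~\ref{assvola}, we have $|\sigma_s^2-\sigma_{t_{i-1}}^2|\le C\,(t_{i-1}-s)^\alpha$ on the domain of the $B$--integral and $|\sigma_s^2-\sigma_{t_{i-1}}^2|\le C\Delta_n^\alpha$ on the domain of the $C$--integral. Substituting $u=t_{i-1}-s$ (resp.\ $u=i\Delta_n-s$) and bounding the incomplete integrals by the complete ones, the error in $\E[B_{i,k}^2]$ is at most $C(e^{-\lambda_k\Delta_n}-1)^2\int_0^\infty u^\alpha e^{-2\lambda_k u}\,\d u=C'(e^{-\lambda_k\Delta_n}-1)^2\lambda_k^{-(\alpha+1)}$ and the error in $\E[C_{i,k}^2]$ is at most $C\Delta_n^\alpha(1-e^{-2\lambda_k\Delta_n})/(2\lambda_k)$. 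Summing these against $e_k^2(y_j)\le2$ and comparing the resulting series to an integral (for the $C$--part this is directly Lemma~\ref{lem:riemann}$(i)$; for the $B$--part one uses the crude bound $(1-e^{-x})^2x^{-(\alpha+1)}\le\min(x^{1-\alpha},x^{-(\alpha+1)})$, which passes to an integrable function because $1/2<\alpha\le1$) shows both aggregated errors are $\mathcal O(\Delta_n^{\alpha+1/2})$, uniformly in $i$ and in $y_j\in[\delta,1-\delta]$. After freezing, the remaining sum no longer contains $\sigma_s$, so the computations of Lemmas~\ref{lem1}--\ref{lem2} and Proposition~\ref{propExp} apply with $\sigma^2$ replaced by the constant $\sigma_{t_{i-1}}^2$ and give
\[
  \E\big[(\Delta_i X)^2(y_j)\big]=\Delta_n^{1/2}\,e^{-\varkappa y_j}\,\frac{\sigma_{t_{i-1}}^2}{\sqrt{\theta_2\pi}}+r_{n,i}+\mathcal O\big(\Delta_n^{3/2}\big)+\mathcal O\big(\Delta_n^{\alpha+1/2}\big),
\]
with $\sup_i|r_{n,i}|=\mathcal O(\Delta_n^{1/2})$, $\sum_{i=1}^n r_{n,i}=\mathcal O(\Delta_n^{1/2})$, all estimates uniform in $j$.

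To conclude, I would sum over $i=1,\dots,n$, divide by $n\sqrt{\Delta_n}$ and use $n\Delta_n=1$: the remainders contribute $\mathcal O(\Delta_n)$ and $\mathcal O(\Delta_n^\alpha)$, while the leading term becomes $\frac{e^{-\varkappa y_j}}{\sqrt{\theta_2\pi}}\,\Delta_n\sum_{i=1}^n\sigma_{t_{i-1}}^2$. A Riemann--sum estimate for the $\alpha$--Hölder function $\sigma^2$ gives $\Delta_n\sum_{i=1}^n\sigma_{t_{i-1}}^2=\int_0^1\sigma_s^2\,\d s+\mathcal O(\Delta_n^\alpha)$, so with $IV_0=\theta_2^{-1/2}\int_0^1\sigma_s^2\,\d s$ this equals $f_{IV_0,\varkappa}(y_j)+\mathcal O(\Delta_n^\alpha)$, with $f$ from \eqref{zj}; averaging over $j=1,\dots,m$ reproduces the right--hand side of \eqref{exptv} up to $\mathcal O(\Delta_n^\alpha)$. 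Finally, choosing $\alpha'\in(1/2,\alpha)$ with $\sqrt{m_n}\Delta_n^{\alpha'-1/2}\to0$ and using $\Delta_n=1/n$ together with $\Delta_n^\alpha\le\Delta_n^{\alpha'}$ yields $\Delta_n^\alpha\sqrt{mn}\le\sqrt{m}\,\Delta_n^{\alpha'-1/2}\to0$ (and $\Delta_n^\alpha\sqrt{n}=\Delta_n^{\alpha-1/2}\to0$ for fixed $m$ because $\alpha>1/2$), i.e.\ $\mathcal O(\Delta_n^\alpha)=\KLEINO\big((mn)^{-1/2}\big)$, which is \eqref{exptv}. The step I expect to be the main obstacle is the freezing of $\sigma_s^2$: one must make the error uniform in $i$ and $y$ and, above all, extract the extra factor $\Delta_n^{1/2}$ out of the sum over $k$, so that the aggregated error is $\mathcal O(\Delta_n^{\alpha+1/2})$ — hence only $\mathcal O(\Delta_n^\alpha)$ after the final division — rather than the unusable $\mathcal O(\Delta_n^{\alpha-1/2})$; everything else is just a variant of Proposition~\ref{propExp} with $i$--dependent local volatilities.
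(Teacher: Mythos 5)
Your argument is correct and follows the same skeleton as the paper's proof: decompose $\Delta_i x_k$ as in \eqref{eq:ABC} with time-dependent $\sigma$, discard the initial-condition term via the remainder $r_{n,i}$ of Lemma~\ref{lem1}, freeze $\sigma_s^2$ at $\sigma_{(i-1)\Delta_n}^2$ in $\E[\mathcal{B}_{i,k}^2]$ and $\E[\mathcal{C}_{i,k}^2]$, reduce to the constant-volatility computation of Proposition~\ref{propExp}, and finish with a Riemann-sum bound for the $\alpha$-Hölder function $\sigma^2$ together with the rate conditions $\sqrt{m_n}\Delta_n^{\alpha'-1/2}\to0$ (resp.\ $\alpha>1/2$ for fixed $m$). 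The one step you handle differently is precisely the one you flag as the obstacle, the freezing error in $\E[\mathcal{B}_{i,k}^2]$: the paper splits the integration domain at $J(i,k)\Delta_n$, uses the sup-bound on the distant past and the Hölder bound only near $t_{i-1}$, and optimizes $J(i,k)$, which produces the bound $\mathcal O\big((\log\lambda_k)^\alpha(1-e^{-\lambda_k\Delta_n})^2\lambda_k^{-(1+\alpha)}\big)$ in \eqref{best} and forces the loss from $\alpha$ to $\alpha'$ already at that point; you instead use the global Hölder bound $|\sigma_s^2-\sigma_{t_{i-1}}^2|\le C(t_{i-1}-s)^\alpha$ (valid on all of $[0,1]$ under Assumption~\ref{assvola}) and bound the integral by $\int_0^\infty u^\alpha e^{-2\lambda_k u}\,\d u\propto\lambda_k^{-(1+\alpha)}$, which is simpler and log-free, so your aggregated error is $\mathcal O(\Delta_n^{\alpha+1/2})$ per increment and $\mathcal O(\Delta_n^{\alpha})$ after normalization, marginally sharper than the paper's $\Delta_n^{\alpha'}$-type remainder. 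Both versions yield \eqref{exptv} under the stated assumptions, and your final rate bookkeeping ($\Delta_n^\alpha\sqrt{mn}\le\sqrt{m}\,\Delta_n^{\alpha'-1/2}\to0$, the $\mathcal O(\Delta_n)$ terms being dominated since $\alpha\le1$ and $m=\mathcal O(n^\rho)$, $\rho<1/2$) is accurate.
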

\begin{proof}
Decompose $\Delta_i x_k$ as in \eqref{eq:ABC} with terms
\begin{align*}
\mathcal{B}_{i,k}&=\int_0^{(i-1)\Delta_n}\sigma_s e^{-\lambda_k((i-1)\Delta_n-s)}(e^{-\lambda_k \Delta_n}-1)\,\d W_s^k\,,\\
\mathcal{C}_{i,k}&=\int_{(i-1)\Delta_n}^{i\Delta_n}\sigma_s e^{-\lambda_k(i\Delta_n-s)}\,\d W_s^k\,,
\end{align*}
and $A_{i,k}$ unchanged. Since the term which hinges on the initial condition does not change under time-dependent volatility, we can neglect it under Assumption \ref{cond} as proved in Lemma \ref{lem1}.\\
Writing $|\sigma^2_{t+s}-\sigma_t^2|=|\sigma_{t+s}-\sigma_t|(\sigma_{t+s}+\sigma_t)$, we see that $(\sigma_t^2)_{t\ge 0}$ satisfies the same Hölder regularity as $(\sigma_t)_{t\ge 0}$. Instead of \eqref{2mc}, we thus obtain on Assumption \ref{assvola}:
\begin{align}\label{2mct}
    \E[\mathcal{C}_{i,k}^2]&=\int_{(i-1)\Delta_n}^{i\Delta_n}\sigma_s^2 e^{-2\lambda_k(i\Delta_n-s)}\,\d s=\frac{1-e^{-2\lambda_k\Delta_n}}{2\lambda_k}\big(\sigma_{(i-1)\Delta_n}^2+\mathcal{O}\big(\Delta_n^{\alpha}\big)\big)\,.
  \end{align}
$\mathcal{B}_{i,k}$ hinges on the whole time interval $[0,(i-1)\Delta_n]$, and the approximation is less obvious. We obtain
\begin{align*}
\E[\mathcal{B}_{i,k}^2]&=\int_0^{(i-1)\Delta_n}\sigma_s^2 e^{-2\lambda_k((i-1)\Delta_n-s)}(e^{-\lambda_k \Delta_n}-1)^2\,\d s\\
    &=\sigma_{(i-1)\Delta_n}^2 (e^{-\lambda_k \Delta_n}-1)^2\frac{1-e^{-2\lambda_k(i-1)\Delta_n}}{2\lambda_k}\\
		&\quad +(e^{-\lambda_k \Delta_n}-1)^2\int_0^{(i-1)\Delta_n}\big(\sigma_s^2-\sigma_{(i-1)\Delta_n}^2\big) e^{-2\lambda_k((i-1)\Delta_n-s)}\,\d s\,.
		\end{align*}
Consider the remainder in the last line, and decompose for some $J,0\le J\le (i-1)$:
\begin{align}\notag
&\Big|\int_0^{(i-1)\Delta_n}\hspace*{-.1cm}\big(\sigma_s^2-\sigma_{(i-1)\Delta_n}^2\big) e^{-2\lambda_k((i-1)\Delta_n-s)}\,\d s\Big| \\
&\notag \le\hspace*{-.05cm}\int_0^{J\,\Delta_n}\hspace*{-.1cm}\big|\sigma_s^2\hspace*{-.05cm}-\hspace*{-.05cm}\sigma_{(i-1)\Delta_n}^2\big| e^{-2\lambda_k((i-1)\Delta_n-s)}\d s
 +\hspace*{-.1cm}\int_{J\,\Delta_n}^{(i-1)\Delta_n}\hspace*{-.1cm}\big|\sigma_s^2\hspace*{-.05cm}-\hspace*{-.05cm}\sigma_{(i-1)\Delta_n}^2\big| e^{-2\lambda_k((i-1)\Delta_n-s)}\d s\\
&\label{tworem}\le \sup_{s\in[0,1]}\sigma_s^2\, \big(2\lambda_k\big)^{-1}e^{-2\lambda_k(i-1-J)\Delta_n}+\big((i-1-J)\Delta_n\big)^{\alpha}\big(2\lambda_k\big)^{-1}\,.
\end{align}
For each $i,k$ this decomposition holds for any $0\le J\le (i-1)$, in particular when setting $J=J(i,k)=\big(\lfloor i-1-\frac{\alpha\log\lambda_k}{2\lambda_k\Delta_n}\rfloor\vee 0\big)$. We then obtain for all $i$ that
\begin{align}\label{best}
	\E[\mathcal{B}_{i,k}^2]=\sigma_{(i-1)\Delta_n}^2 (e^{-\lambda_k \Delta_n}-1)^2\frac{1-e^{-2\lambda_k(i-1)\Delta_n}}{2\lambda_k}+\mathcal O\Big(\frac{(\log \lambda_k)^\alpha(e^{-\lambda_k \Delta_n}-1)^2}{\lambda_k^{1+\alpha}}\Big)\,.
\end{align}
For any $1/2<\alpha'<\alpha$, it holds by \eqref{2mct}, \eqref{best} and Lemma \ref{lem1} that
\begin{align*}&\E\Big[\sum_{i=1}^n(\Delta_i X)^2(y_j)\Big]=\sum_{i=1}^n\E\Big[\sum_{k,l}e_k(y_j)e_l(y_j)\Big(\big(A_{i,k}+ \mathcal{B}_{i,k}+\mathcal{C}_{i,k}\big)\big(A_{i,l}+ \mathcal{B}_{i,l}+\mathcal{C}_{i,l}\big)\Big)\Big]\\
&=\sum_{i=1}^n\sum_{k\ge 1}e_k^2(y_j)\sigma_{(i-1)\Delta_n}^2\Big( (e^{-\lambda_k \Delta_n}-1)^2\frac{1-e^{-2\lambda_k(i-1)\Delta_n}}{2\lambda_k}+\frac{1-e^{-2\lambda_k\Delta_n}}{2\lambda_k}\Big)+\mathcal{O}\big(\Delta_n^{1/2}\big)\\
&\quad +\mathcal{O}\bigg(\sum_{i=1}^n\Delta_n^{\alpha+1/2}\Big(\Delta_n^{1/2}\sum_{k\ge 1}\frac{1-e^{-2\lambda_k\Delta_n}}{2\lambda_k\Delta_n}+\Delta_n^{1/2}\sum_{k\ge 1}\frac{(1-e^{-\lambda_k\Delta_n})^2}{(\lambda_k\Delta_n)^{1+\alpha}}(\log\lambda_k)^{\alpha}\Big)\bigg)\,.\end{align*}
Using the Riemann sum $n^{-1}\sum_{i=1}^n \sigma_{(i-1)\Delta_n}^2= \int_0^1\sigma_s^2\,\d s+\mathcal{O}(\Delta_n^{\alpha})$, and the analogous steps as in the proof of Proposition \ref{propExp} applied to the leading term, we obtain that multiplied with $\Delta_n^{-1/2}$ it converges to $e^{-\varkappa y}IV_0/\sqrt{\pi} $. The remainders are
\begin{align*}
\mathcal{O}\Big(\sum_{i=1}^n\Delta_n^{\alpha'+1/2}\Big)+\mathcal{O}\big(\Delta_n^{1/2}\big)=\mathcal{O}\Big(\Delta_n^{\alpha'-1/2}+\Delta_n^{1/2}\Big)\,
\end{align*}
which tend to zero due to $\alpha'>1/2$. For $m=m_n\to\infty$, $\sqrt{m_n}\Delta_n^{\alpha'-1/2}\to 0$ then implies \eqref{exptv}.\hfill\qedhere
\end{proof}
The next result generalizes Proposition \ref{varprop} to a setting with time-dependent volatility. 
\begin{prop}\label{zjprop}
On Assumptions \ref{Obs}, \ref{cond} and \ref{assvola}, for $m\in\N$ fix and, when $\sqrt{m_n}\Delta_n^{\alpha'-1/2}\to 0$ for some $1/2<\alpha'<\alpha$, for $m=m_n\to\infty$, the random variables $\delta_{n,j}$ in \eqref{zj} satisfy $\E[\delta_{n,j}]=\KLEINO((mn)^{-1/2})$ and
\begin{align*}\cov(\delta_{n,j},\delta_{n,k})=\1_{\{j=k\}}\big(1+\KLEINO(1)\big)\Delta_{n}\frac{\Gamma}{\theta_2} e^{-2\varkappa y_{j}}\int_0^1 \sigma_{s}^{4}\,\d s + \mathcal{O}\big(\Delta_{n}^{3/2}(\delta^{-1}+\1_{\{j\ne k\}}|y_j-y_k|^{-1})\big).\end{align*}
\end{prop}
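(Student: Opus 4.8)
The plan is to reduce everything to the constant-volatility results of Propositions~\ref{varprop} and \ref{apprprop} by tracking the additional $\sigma$-factors. Since $f_{IV_0,\varkappa}(y_j)$ is deterministic, $\delta_{n,j}=Z_j-f_{IV_0,\varkappa}(y_j)$ gives $\E[\delta_{n,j}]=\E[Z_j]-f_{IV_0,\varkappa}(y_j)$ and $\cov(\delta_{n,j},\delta_{n,k})=\cov(Z_j,Z_k)$, so the first assertion $\E[\delta_{n,j}]=\KLEINO((mn)^{-1/2})$ is immediate from Proposition~\ref{apprprop} applied at each spatial point $y_j$, and only $\cov(Z_j,Z_k)$ with $Z_j=\tfrac1{n\sqrt{\Delta_n}}\sum_{i=1}^n(\Delta_iX)^2(y_j)$ remains to be computed.

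First I would split off the initial-value part by writing $\Delta_iX(y)=\alpha_i(y)+\beta_i(y)$ with $\alpha_i(y)=\sum_{k\ge1}A_{i,k}e_k(y)$ (depending only on $\xi$) and $\beta_i(y)=\sum_{k\ge1}(\mathcal B_{i,k}+\mathcal C_{i,k})e_k(y)$, where $\mathcal B_{i,k},\mathcal C_{i,k}$ are the time-dependent counterparts of $B_{i,k},C_{i,k}$ introduced in the proof of Proposition~\ref{apprprop}; note that $\beta_i(y)$ is centered Gaussian and independent of $\xi$. Expanding $(\Delta_iX)^2=\alpha_i^2+2\alpha_i\beta_i+\beta_i^2$ and repeating the $L^2$-estimates from the proof of Lemma~\ref{lem:neglectA} — which use only the bound on $\sup_k\lambda_k\E[\langle\xi,e_k\rangle_\theta^2]$ and the boundedness of $(\sigma_s)$, the terms $A_{i,k}$ not involving $\sigma$ — shows that the standard deviation of the $\alpha$-part of $Z_j$ is $\mathcal O(\Delta_n)$, so by Cauchy--Schwarz the contributions of the $\alpha$-terms to $\cov(Z_j,Z_k)$ are $\mathcal O(\Delta_n^{3/2})$, within the claimed remainder. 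It therefore suffices to compute $\cov(\bar Z_j,\bar Z_k)$ with $\bar Z_j:=\tfrac1{n\sqrt{\Delta_n}}\sum_i\beta_i(y_j)^2$. Since $(\beta_i(y))_{i,y}$ is centered Gaussian, Isserlis' theorem gives $\cov(\beta_i(y_j)^2,\beta_{i'}(y_k)^2)=2\,\Cov(\beta_i(y_j),\beta_{i'}(y_k))^2$, with $\Cov(\beta_i(y_j),\beta_{i'}(y_k))=\sum_{l\ge1}\E[(\mathcal B_{i,l}+\mathcal C_{i,l})(\mathcal B_{i',l}+\mathcal C_{i',l})]\,e_l(y_j)e_l(y_k)$.

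The heart of the argument is a time-dependent refinement of the computation leading to \eqref{eq:CovV2}. Using the interval-splitting device of Proposition~\ref{apprprop}, each of the covariances $\E[\mathcal B_{i,l}\mathcal B_{i',l}]$, $\E[\mathcal B_{i,l}\mathcal C_{i',l}]$, $\E[\mathcal C_{i,l}\mathcal C_{i',l}]$ equals the corresponding constant-$\sigma$ expression from \eqref{Sbb}--\eqref{Sbc}, with $\sigma^2$ frozen at time $(\min(i,i')-1)\Delta_n$, up to a remainder of order $(\log\lambda_l)^\alpha(e^{-\lambda_l\Delta_n}-1)^2\lambda_l^{-1-\alpha}$ times the appropriate exponential lag-decay. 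Substituting this into Isserlis' formula and then squaring and summing over $l$ via Lemma~\ref{lem:riemann} — using \eqref{eq:offDiag} to handle the case $j\ne k$ and the approximation $e_l^2(y)\approx e^{-\varkappa y}$ as in \eqref{eq:VarRem} for $j=k$ — reproduces, for $j=k$, exactly the lag-series $\sum_{r\ge0}I(r)^2+2=\pi\Gamma$ of Proposition~\ref{varprop}. The two frozen factors combine to $\sigma^4$ at a single index and, being essentially constant over the short range of lags that contribute, summing over $i$ turns them into the overall factor $\tfrac1n\sum_{i=1}^n\sigma^4_{(i-1)\Delta_n}$, which a Riemann-sum approximation for the $\alpha$-Hölder function $s\mapsto\sigma_s^4$ converts into $\int_0^1\sigma_s^4\,\d s+\mathcal O(\Delta_n^\alpha)$. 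This yields the stated leading term $\1_{\{j=k\}}(1+\KLEINO(1))\Delta_n\frac{\Gamma}{\theta_2}e^{-2\varkappa y_j}\int_0^1\sigma_s^4\,\d s$.

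The main obstacle is the bookkeeping of the remainders. The $\sin^2$- and off-diagonal errors are precisely those of Proposition~\ref{varprop} with $p=n$, hence of order $\Delta_n^{3/2}\delta^{-1}$ and $\Delta_n^{3/2}|y_j-y_k|^{-1}$; in addition one must check that the Hölder-freezing remainders in $\E[\mathcal B\mathcal B]$, $\E[\mathcal B\mathcal C]$ and $\E[\mathcal C\mathcal C]$ — which enter $\Cov(\beta_i,\beta_{i'})^2$ through the cross term $2\cdot(\text{frozen leading term})\cdot(\text{remainder})$ — are, after summation over $l$ and over $i,i'$, of order $\mathcal O(\Delta_n^{1+\alpha'})=o(\Delta_n^{3/2})$ for $1/2<\alpha'<\alpha$, and that the neglected diagonal terms $l=l'$ in the double sum, together with the $\alpha$-contributions from the previous step, stay within this order; the condition $\sqrt{m_n}\Delta_n^{\alpha'-1/2}\to0$, inherited from Proposition~\ref{apprprop}, is what guarantees the corresponding bound $\KLEINO((mn)^{-1/2})$ for $\E[\delta_{n,j}]$ in the regime $m\to\infty$. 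All of these steps follow by mirroring the corresponding arguments in the proofs of Propositions~\ref{varprop} and \ref{apprprop} and of Lemma~\ref{lem:neglectA}.
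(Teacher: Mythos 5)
Your route is essentially the paper's: freeze the volatility in each of $\E[\mathcal B_{i,l}\mathcal B_{i',l}]$, $\E[\mathcal C_{i,l}\mathcal B_{i',l}]$, $\E[\mathcal C_{i,l}\mathcal C_{i',l}]$ at time $((i\wedge i')-1)\Delta_n$ with Hölder remainders (this is \eqref{Scc2}, \eqref{Sbc2}, \eqref{Sbb2} together with \eqref{best}), insert into the Isserlis/$D_{k,l}$ computation of Proposition~\ref{varprop}, recover $\pi\Gamma$ via Lemma~\ref{lem:riemann}, and turn the frozen factors into $\tfrac1n\sum_i\sigma^4_{(i-1)\Delta_n}\to\int_0^1\sigma_s^4\,\d s$, with the off-diagonal and $\sin^2$ errors exactly as in Proposition~\ref{varprop} with $p=n$. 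The one step whose justification does not hold up is the clause ``being essentially constant over the short range of lags that contribute'': the lag kernel in the double sum is $e^{-(\lambda_k+\lambda_l)(i'-i)\Delta_n}$, and for low spectral indices $k,l$ its decay scale is of order $((\lambda_k+\lambda_l)\Delta_n)^{-1}\asymp n$, so the contributing lags cover a fixed fraction of $[0,1]$ and Hölder continuity of $\sigma^4$ over that range gives nothing. This is precisely where the paper's proof does its real work: after the $\overline\Sigma^{\mathcal B}$-approximation with summed error \eqref{err1}, the cross term $\sum_{i>j}\overline\Sigma^{\mathcal B,k}_{ij}\Sigma^{\mathcal{BC},l}_{ji}$ forces a comparison of $\sigma^4_{(j-1)\Delta_n}$ with $\sigma^4_{(i-1)\Delta_n}$ across such long lags, handled there by the splitting $J(i;b)=\lfloor i-1-i^b\rfloor\vee0$ with the optimized choice $b=\alpha/(\alpha+3)$. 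In your freeze-then-square organisation you can in fact bypass this difficulty, because for $i<i'$ every nonvanishing contribution to $\Cov(\beta_i(y),\beta_{i'}(y))$ is frozen at the same time $(i-1)\Delta_n$; hence $\sigma^4_{(i-1)\Delta_n}$ is attached to the minimum index and factors out of an exact geometric series over $i'>i$, leaving only a Riemann sum over $i$ (plus the edge terms already controlled in Proposition~\ref{varprop}). But that is a different argument from the one you wrote, and it must be made explicitly; the short-range-constancy appeal, taken literally, fails for small $k,l$.

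Two further caveats. First, your Cauchy--Schwarz elimination of the initial-value part needs a variance bound for $\tfrac1{n\sqrt{\Delta_n}}\sum_i\alpha_i^2(y)$, whose diagonal $\sum_i\sum_kA_{i,k}^2e_k^2(y)$ involves fourth moments of $\langle\xi,e_k\rangle_\theta$, which Assumption~\ref{cond} does not provide; Lemma~\ref{lem:neglectA} controls that diagonal only in $L^1$ (the paper itself is silent here, simply writing $\var(\delta_{n,r})$ through the $\mathcal B,\mathcal C$ terms alone), so either add a moment condition or replace Cauchy--Schwarz for that piece by a first-moment/covariance argument. Second, the freezing remainders for $\Sigma^{\mathcal C}$ and $\Sigma^{\mathcal{BC}}$ are of the simpler form $\mathcal O(\Delta_n^{\alpha})$ times the respective kernels; only $\Sigma^{\mathcal B}$ requires the $(\log\lambda_k)^{\alpha}\lambda_k^{-(1+\alpha)}$ bound \eqref{best}. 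Neither point changes your claimed total order $\mathcal O(\Delta_n^{1+\alpha'})=\KLEINO(\Delta_n^{3/2})$, but the corresponding bookkeeping, which constitutes the bulk of the paper's proof, is left in your proposal to ``mirroring''.
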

\begin{proof}
Using Proposition \ref{apprprop}, we obtain in analogy to Proposition \ref{propExp} that
\begin{align*}\E[Z_j]&=\frac{1}{\sqrt{n}}\sum_{i=1}^n\Big(\frac{\sigma^2_{(i-1)\Delta_n}}{\sqrt{\theta_2\pi}}\Delta_n^{1/2}e^{-\varkappa y_j}\Big)+\KLEINO\Big(\frac{1}{\sqrt{mn}}\Big)=\frac{e^{-\varkappa y_j}}{\sqrt{\theta_2\pi}}\int_0^1\sigma_s^2\,\d s+\KLEINO\Big(\frac{1}{\sqrt{mn}}\Big)\end{align*}
which shows that $\E[\delta_{n,j}]=\KLEINO((mn)^{-1/2})$ for all $j\in\{1,\ldots,m\}$.\\
For analyzing the variance-covariance structure of $(\delta_{n,j})$, we reconsider and extend the proof of Proposition \ref{varprop} to time-dependent volatility. 
We use the following notation generalizing \eqref{Sbb}-\eqref{Sbc}:
\begin{align*}{\Sigma}_{ij}^{\mathcal{B},k}:=\cov({\mathcal{B}}_{i,k},{\mathcal{B}}_{j,k})\,,\,{\Sigma}_{ij}^{\mathcal{C},k}\,:=\cov({\mathcal{C}}_{i,k},{\mathcal{C}}_{j,k})~,\,{\Sigma}_{ij}^{\mathcal{BC},k}\,:=\cov({\mathcal{C}}_{i,k},{\mathcal{B}}_{j,k})\,.
\end{align*}
Transferring \eqref{Scc} and \eqref{Sbc} to time-dependent volatility, we obtain
\begin{subequations}
\begin{align}
\label{Scc2}{\Sigma}_{ij}^{\mathcal{C},k}&=\1_{\{i=j\}}\,\big(\sigma_{(i-1)\Delta_n}^2+\mathcal{O}\big(\Delta_n^{\alpha}\big)\big)\,\frac{1-e^{-2\lambda_k\Delta_n}}{2\lambda_k},\\
\label{Sbc2}{\Sigma}_{ij}^{\mathcal{BC},k}&=\1_{\{j>i\}}e^{-\lambda_k(j-i)\Delta_n}\big(e^{\lambda_k\Delta_n}\hspace*{-.05cm}-\hspace*{-.05cm}e^{-\lambda_k\Delta_n}\big)\frac{(e^{-\lambda_k \Delta_n}-1)}{2\lambda_k}\big(\sigma_{(i-1)\Delta_n}^2\hspace*{-.1cm}+\hspace*{-.05cm}\mathcal{O}\big(\Delta_n^{\alpha}\big)\big),
\end{align}
as well as for $j>i$ generalizing \eqref{Sbb}:
\begin{align}\label{Sbb2}
{\Sigma}_{ij}^{\mathcal{B},k}&=(e^{-\lambda_k \Delta_n}-1)^2\int_{0}^{(i-1)\Delta_n} \sigma_s^2 e^{-\lambda_k((i+j-2)\Delta_n-2s)}\,\d s\\
&\notag=(e^{-\lambda_k \Delta_n}-1)^2 e^{-\lambda_k(j-i)\Delta_n}\int_{0}^{(i-1)\Delta_n} \sigma_s^2 e^{-2\lambda_k((i-1)\Delta_n-s)}\,\d s=\E[\mathcal{B}_{i,k}^2]e^{-\lambda_k(j-i)\Delta_n}\,.
\end{align}
\end{subequations}
It holds for any $r\in\{1,\ldots,m\}$, analogously to the illustration with $(D_{k,l})$ in \eqref{eq:CovV}, that
\begin{align*}\var\big(\delta_{n,r}\big)=\frac{2}{n}\sum_{k,l}e_k^2(y_r)e_l^2(y_r)\sum_{i,j=1}^n\big({\Sigma}_{ij}^{\mathcal{B},k}+{\Sigma}_{ij}^{\mathcal{BC},k}+{\Sigma}_{ji}^{\mathcal{BC},k}+{\Sigma}_{ij}^{\mathcal{C},k}\big)\big({\Sigma}_{ij}^{\mathcal{B},l}+{\Sigma}_{ij}^{\mathcal{BC},l}+{\Sigma}_{ji}^{\mathcal{BC},l}+{\Sigma}_{ij}^{\mathcal{C},l}\big)\,.
\end{align*}
We determine generalizations of the terms $(D_{k,l})$ from \eqref{eq:CovV}. The other steps of the proof of Proposition \ref{varprop} can be adopted.
We begin with the generalization of \eqref{h1}. We approximate ${\Sigma}_{ij}^{\mathcal{B},k}$ by
\begin{align}\label{hbappr}
\overline{\Sigma}_{ij}^{\mathcal{B},k}=e^{-\lambda_k|j-i|\Delta_n}\sigma_{((i\wedge j)-1)\Delta_n}^2 (e^{-\lambda_k \Delta_n}-1)^2\frac{1-e^{-2\lambda_k((i\wedge j)-1)\Delta_n}}{2\lambda_k}\,.
\end{align}
Writing
\[{\Sigma}_{ij}^{\mathcal{\mathcal{B}},k}{\Sigma}_{ij}^{\mathcal{B},l}-\overline{\Sigma}_{ij}^{\mathcal{B},k}\overline{\Sigma}_{ij}^{\mathcal{B},l}={\Sigma}_{ij}^{\mathcal{B},k}\big({\Sigma}_{ij}^{\mathcal{B},l}-\overline{\Sigma}_{ij}^{\mathcal{B},l}\big)+\big({\Sigma}_{ij}^{\mathcal{B},k}-\overline{\Sigma}_{ij}^{\mathcal{B},k}\big)\overline{\Sigma}_{ij}^{\mathcal{B},l}\,\]
this approximation error yields two terms of analogous form. With \eqref{Sbb2}, \eqref{Sbb} and \eqref{best}, and since $(\sigma_t)_{t\in[0,1]}$ is uniformly bounded, we conclude that
\begin{align}\notag\sum_{k,l}\sum_{i,j=1}^n {\Sigma}_{ij}^{\mathcal{B},k}\big({\Sigma}_{ij}^{\mathcal{B},l}-\overline{\Sigma}_{ij}^{\mathcal{B},l}\big)&=\mathcal O\Big(\sum_{k,l}\sum_{i,j=1}^ne^{-\lambda_k|i-j|\Delta_n}\frac{(1-e^{-\lambda_k\Delta_n})^2}{\lambda_k}\frac{(1-e^{-\lambda_l\Delta_n})^2}{(\lambda_l)^{1+\alpha}}(\log\lambda_l)^{\alpha}\Big)\\
&\notag=\mathcal O\Big(\sum_{k,l} n \frac{1+e^{-\lambda_k\Delta_n}}{1-e^{-\lambda_k\Delta_n}}\frac{(1-e^{-\lambda_k\Delta_n})^2}{\lambda_k}\frac{(1-e^{-\lambda_l\Delta_n})^2}{(\lambda_l)^{1+\alpha}}(\log\lambda_l)^{\alpha}\Big)\\
&\notag=\mathcal O\Big( n \sum_{k\ge 1} \frac{1-e^{-2\lambda_k\Delta_n}}{\lambda_k}\sum_{l\ge 1}\frac{(1-e^{-\lambda_l\Delta_n})^2}{(\lambda_l)^{1+\alpha}}(\log\lambda_l)^{\alpha}\Big)\\
&\label{err1}=\mathcal O \big(n\Delta_n^{1+\alpha'}\big)=\mathcal O \big(\Delta_n^{\alpha'}\big)\,,
\end{align}
for any $\alpha'<\alpha$. We obtain that
\(\sum_{k,l}\sum_{i,j=1}^n {\Sigma}_{ij}^{\mathcal{B},k}{\Sigma}_{ij}^{\mathcal{B},l} = \sum_{k,l}\sum_{i,j=1}^n\overline{\Sigma}_{ij}^{\mathcal{B},k}\overline{\Sigma}_{ij}^{\mathcal{B},l}+\KLEINO(1)\,.\)
Decomposing the double sum, we obtain that 
\begin{align*}\sum_{i,j=1}^n\overline{\Sigma}_{ij}^{\mathcal{B},k}\overline{\Sigma}_{ij}^{\mathcal{B},l}&=
\sum_{i=1}^n\overline{\Sigma}_{ii}^{\mathcal{B},k}\overline{\Sigma}_{ii}^{\mathcal{B},l} +\frac{2}{n}\sum_{i=1}^n\sigma_{(i-1)\Delta_n}^4\hspace*{-.05cm}\sum_{j=i+1}^{n}e^{-(\lambda_k+\lambda_l)(j-i)\Delta_n}\frac{(1-e^{-\lambda_k\Delta_n})^2(1-e^{-\lambda_l\Delta_n})^2}{4\Delta_n\lambda_k\lambda_l}\\
&=\frac{1}{n}\sum_{i=1}^n\sigma_{(i-1)\Delta_n}^4\Big(1+\frac{e^{-(\lambda_k+\lambda_l)\Delta_n}}{1-e^{-(\lambda_k+\lambda_l)\Delta_n}}\Big)\frac{(1-e^{-\lambda_k\Delta_n})^2(1-e^{-\lambda_l\Delta_n})^2}{4\Delta_n\lambda_k\lambda_l}+R_{k,l}\,,\end{align*}
with remainders $R_{k,l}$.
Here we used, analogously to the computations below \eqref{h3}, the geometric series 
\begin{align}\label{geohelp}\sum_{j=i+1}^ne^{-(\lambda_k+\lambda_l)(j-i)\Delta_n}=\frac{e^{-(\lambda_k+\lambda_l)\Delta_n}-e^{-(\lambda_k+\lambda_l)(n-i+1)\Delta_n}}{1-e^{-(\lambda_k+\lambda_l)\Delta_n}}\end{align}
where the resulting $i$-dependent terms induce $R_{k,l}$, which satisfy $\sum_{k,l} R_{k,l}\to 0$. Since in these two terms only addends with $i=j$ are non-zero, \eqref{h2} and \eqref{h5} directly generalize to analogous expressions just replacing $\sigma^4$ by $n^{-1}\sum_{i=1}^n\sigma_{(i-1)\Delta_n}^4$ $\rightarrow \int_0^1\sigma_s^4\,\d s$. The generalization of \eqref{h3} is analogous to the one of \eqref{h1} and we omit it. We address the generalization of \eqref{h4}. We decompose
\[\sum_{i,j=1}^n{\Sigma}_{ij}^{\mathcal{B},k}{\Sigma}_{ij}^{\mathcal{BC},l}=\sum_{i=1}^n\sum_{j=i+1}^n\overline{\Sigma}_{ij}^{\mathcal{B},k}{\Sigma}_{ij}^{\mathcal{BC},l}+ \sum_{i=1}^n\sum_{j=i+1}^n\big({\Sigma}_{ij}^{\mathcal{B},k}-\overline{\Sigma}_{ij}^{\mathcal{B},k}\big){\Sigma}_{ij}^{\mathcal{BC},l}\,.\]
Similar to \eqref{err1}, for any $\alpha'<\alpha$:
\(\sum_{k,l}\sum_{i=1}^n\sum_{j=i+1}^n\big({\Sigma}_{ij}^{\mathcal{B},k}-\overline{\Sigma}_{ij}^{\mathcal{B},k}\big){\Sigma}_{ij}^{\mathcal{BC},l}=\mathcal O \big(\Delta_n^{\alpha'}\big).\)
By \eqref{Sbc2} and \eqref{hbappr}, we obtain that
\begin{align*}\sum_{i,j=1}^n\overline{\Sigma}_{ij}^{\mathcal{B},k}{\Sigma}_{ij}^{\mathcal{BC},l}&=\sum_{i=1}^n\sum_{j=i+1}^n \big(\sigma^4_{(i-1)\Delta_n}+\mathcal{O}(\Delta_n^{\alpha})\big)e^{-(\lambda_k+\lambda_l)(j-i)\Delta_n}(1-e^{-\lambda_k\Delta_n})^2\frac{1-e^{-2\lambda_k(i-1)\Delta_n}}{2\lambda_k}\\
&\hspace*{7cm}\times (e^{\lambda_l\Delta_n}-e^{-\lambda_l\Delta_n})\frac{e^{-\lambda_l\Delta_n}-1}{2\lambda_l}\,,\end{align*}
where we apply \eqref{geohelp} again. Then, the only dependence on $i$ remaining in the leading term is by $\sigma^4_{(i-1)\Delta_n}$, while all other terms are exactly the same as in the proof of Proposition \ref{varprop}. This directly yields the generalization. The term 
\begin{align*}\sum_{i,j=1}^n\overline {\Sigma}_{ij}^{\mathcal{B},k}{\Sigma}_{ji}^{\mathcal{BC},l}=\sum_{i=2}^n\sum_{j=1}^{i-1}\overline{\Sigma}_{jj}^{\mathcal{B},k}e^{-(\lambda_k+\lambda_l)(i-j)\Delta_n}(e^{\lambda_l\Delta_n}-e^{-\lambda_l\Delta_n})\frac{e^{-\lambda_l\Delta_n}-1}{2\lambda_l}\big(\sigma^2_{(j-1)\Delta_n}+\mathcal{O}(\Delta_n^{\alpha})\big)\end{align*}
is more involved. We used \eqref{Sbc2} and an illustration analogous to \eqref{Sbb2} for the equality. Here, we have to show that the remainder of approximating the volatility factors by $\sigma^4_{(i-1)\Delta_n}$ is asymptotically negligible. Thereto, we decompose 
\begin{align*}&\sum_{i=2}^n\sum_{j=1}^{i-1}\big(\sigma^4_{(j-1)\Delta_n}-\sigma^4_{(i-1)\Delta_n}\big)e^{-(\lambda_k+\lambda_l)(i-j)\Delta_n}\\
&\quad \le C\,\sum_{i=2}^n\Big(\sum_{j=J+1}^{i-1}e^{-(\lambda_k+\lambda_l)(i-j)\Delta_n}\big((i-j)\Delta_n\big)^{\alpha}+\sum_{j=1}^J e^{-(\lambda_k+\lambda_l)(i-j)\Delta_n}\Big)\\
&\quad \le C\,\sum_{i=2}^n\Big(\sum_{j=J+1}^{i-1}e^{-(\lambda_k+\lambda_l)(i-j)\Delta_n}\big((i-1-J)\Delta_n\big)^{\alpha}+e^{-(\lambda_k+\lambda_l)(i-J)\Delta_n}\frac{1-e^{-(\lambda_k+\lambda_l)J\Delta_n}}{1-e^{-(\lambda_k+\lambda_l)\Delta_n}}\Big)\,,\end{align*}
with some $C<\infty$ and $J\in\{1,\dots,i-1\}$ (for $J=i-1$ set the left inner sum equal to zero). We use this decomposition with $J=J(i;b)=\big(\lfloor i-1-i^b\rfloor	\vee 0\big)$, $b\in(0,1)$. Since we omit the factors $n^{-1}$ compared to \eqref{h1}-\eqref{hl}, we only require that remainders tend to zero. The first term of the decomposition then induces a remainder of order
\begin{align*}\mathcal{O}\Big(\sum_{k,l}\sum_{i=2}^n\sum_{j=1}^{i-1}\frac{(e^{-\lambda_k\Delta_n}-1)^2}{2\lambda_k}e^{-(\lambda_k+\lambda_l)(i-j)\Delta_n}(e^{\lambda_l\Delta_n}-e^{-\lambda_l\Delta_n})\frac{e^{-\lambda_l\Delta_n}-1}{2\lambda_l}\big((i-1-J)\Delta_n\big)^{\alpha}\Big)\\
=\mathcal{O}\Big(\sum_{k,l}\frac{(e^{-\lambda_k\Delta_n}-1)^2}{4\lambda_k\lambda_l\Delta_n}(e^{-\lambda_l\Delta_n}-1)\frac{1-e^{-2\lambda_l\Delta_n}}{1-e^{-(\lambda_k+\lambda_l)\Delta_n}}e^{-\lambda_k\Delta_n}\big(n^b\Delta_n\big)^{\alpha}\Big)\,,
\end{align*}
where we use a geometric series again. Following the same steps as in the proof of Proposition \ref{varprop}, we obtain that this term is of order $n^{(b-1)\alpha}\to 0$. 
The remainder induced by the second term of the decomposition is bounded by
\begin{align*}&\mathcal{O}\Big(\Delta_n\sum_{i=2}^n\sum_{r\ge 0}\Big(\sqrt{\Delta_n}\sum_{k\ge 1}\frac{(1-e^{-\lambda_k\Delta_n})^2}{2\lambda_k\Delta_n}e^{-\lambda_k(i^b+r)\Delta_n}\Big)\Big(\sqrt{\Delta_n}\sum_{l\ge 1}\frac{(1-e^{-2\lambda_l\Delta_n})^2}{2\lambda_l\Delta_n}e^{-\lambda_l(i^b-1+r)\Delta_n}\Big)\Big)\\
&=\mathcal{O}\Big(\Delta_n\sum_{i=1}^n i^{-3b}\Big)=\mathcal{O}\Big( n^{-(3b\wedge 1)}\Big)\,,\end{align*}
where we use the integral bound \eqref{intbound} and an analogous one. Setting $b=\alpha/(\alpha+3)$, shows that the total remainder is of order $n^{-3\alpha/(\alpha+3)}\to 0$. This completes the discussion of the generalization of \eqref{h4}. The analogues of the terms \eqref{hl} are again zero. Combining these generalization steps with the same integral approximations as in the proof of Proposition \ref{varprop}, yields
\begin{align*}\var\big(\delta_{n,r}\big)&=\frac{4}{n}\sum_{k<l}e^{-2\varkappa y_r}(1+\KLEINO(1))\hspace*{-.1cm}\sum_{i,j=1}^n\hspace*{-.1cm}\big({\Sigma}_{ij}^{\mathcal{B},k}\hspace*{-.02cm}+\hspace*{-.02cm}{\Sigma}_{ij}^{\mathcal{BC},k}\hspace*{-.02cm}+\hspace*{-.02cm}{\Sigma}_{ji}^{\mathcal{BC},k}\hspace*{-.02cm}+\hspace*{-.02cm}{\Sigma}_{ij}^{\mathcal{C},k}\big)\big({\Sigma}_{ij}^{\mathcal{B},l}\hspace*{-.02cm}+\hspace*{-.02cm}{\Sigma}_{ij}^{\mathcal{BC},l}\hspace*{-.02cm}+\hspace*{-.02cm}{\Sigma}_{ji}^{\mathcal{BC},l}\hspace*{-.02cm}+\hspace*{-.02cm}{\Sigma}_{ij}^{\mathcal{C},l}\big)\\
&=\big(1+\KLEINO(1)\big)\Delta_{n}\frac{\Gamma}{\theta_2} e^{-2\varkappa y_{r}}\int_0^1 \sigma_{s}^{4}\,\d s+\mathcal{O}\big(\delta^{-1}\Delta_n^{3/2}\big).
\end{align*}
The proof that covariances for different spatial observations are negligible directly extends to time-dependent volatility and we adopt the order from Proposition \ref{varprop}. 
\end{proof}
Based on the two previous propositions, we now prove Theorem~\ref{thm:leastSquares}. The M-estimator is associated to the contrast function, with $\eta:=(IV_{0},\varkappa)$,
\begin{equation*}
K(\eta,\eta'):=\int_\delta^{1-\delta}\big(f_{\eta}(y)-f_{\eta'}(y)\big)^{2}\,\d y\,.
\end{equation*}

\emph{Step 1:} We prove for $K_{n}(\eta'):=\frac{1}{m}\sum_{j=1}^{m}(Z_{j}-f_{\eta'}(y_{j}))^{2}$ that
\[
K_{n}(\eta')-K(\eta,\eta')\overset{\P_{\eta}}{\longrightarrow}0.
\]
To verify this convergence in probability, we decompose
\begin{align}\label{eq:contrast}
K_{n}(\eta') & =\frac{1}{m}\sum_{j=1}^{m}\big(f_{\eta}(y_{j})-f_{\eta'}(y_{j})\big)^{2}-\frac{2}{m}\sum_{j=1}^{m}\delta_{n,j}\big(f_{\eta}(y_{j})-f_{\eta'}(y_{j})\big)+\frac{1}{m}\sum_{j=1}^{m}\delta_{n,j}^{2}.
\end{align}
The first term converges to $\int_\delta^{1-\delta}\big(f_{\eta}(y)-f_{\eta'}(y)\big)^{2}\,\d y$ as $m\to\infty$. For the second term we have with $g:=f_{\eta}-f_{\eta'}$, being uniformly bounded on $\Xi\times[0,1]$, by Proposition~\ref{zjprop}
\begin{align*}
&\P_\eta\Big(\Big|\frac{2}{m}\sum_{j=1}^{m}\delta_{n,j}g(y_{j})\Big|>\varepsilon\Big)
\le  \varepsilon^{-2}\frac{4}{m^{2}}\sum_{j,k=1}^{m}|\E[\delta_{n,j}\delta_{n,k}]|\cdot|g(y_{j})g(y_{k})|\\
&\quad \le  \varepsilon^{-2}\frac{4}{m^{2}}\sum_{j,k=1}^{m}\big(|\cov(\delta_{n,j}\,,\,\delta_{n,k})|+|\E[\delta_{n,j}]\E[\delta_{n,k}]|\big)\cdot|g(y_{j})g(y_{k})|\\
&\quad =  \mathcal{O} \Big(\frac{4}{m^{2}}\sum_{j,k=1}^{m}\Big(\1_{\{j=k\}}\Delta_{n}\frac{\Gamma}{\theta_2} e^{-2\varkappa y_{j}}\int_0^1\sigma_s^4\,\d s+\Delta_{n}^{3/2}\big(\delta^{-1}+\1_{\{j\ne k\}}|y_j-y_k|^{-1}\big)+(mn)^{-1}\Big)\Big)\\
&\quad = \mathcal{O}\big(\Delta_{n}/m\big).
\end{align*}
For the third term in \eqref{eq:contrast} we obtain with $\E[\delta_{n,j}^2]=\Delta_n\Gamma e^{-2\varkappa y_j}\int_0^1\sigma_s^4\,\d s/\theta_2(1+\KLEINO(1))$:
\begin{align*}
\P_\eta\Big(\frac{1}{m}\sum_{j=1}^{m}\delta_{n,j}^{2}>\varepsilon\Big) 
& \le m\sup_{j}\P_\eta\Big(\delta_{n,j}^{2}>\varepsilon/2\Big)
  \le\frac{2m}{\varepsilon}\sup_{j}\E\big[\delta_{n,j}^{2}\big]
 =\mathcal{O}(m\Delta_{n})\,.
\end{align*}

\emph{Step 2:} We prove consistency of $\hat{\eta}=(\widehat{IV}_{\negthinspace 0},\hat{\varkappa})$. Since $K(\eta,\cdot)$ and $K_{n}$ are continuous, consistency follows from continuous mapping if the stochastic
convergence of $K_{n}$ is also uniform. Now, tightness follows from the equicontinuity:
\begin{align*}
&\lim_{\delta\to0}\limsup_{n\to\infty}\P_\eta\Big(\sup_{|\eta'-\eta|<\delta}|K_{n}(\eta)-K_{n}(\eta')|\ge\varepsilon\Big) \\
 &\quad\le\lim_{\delta\to0}\limsup_{n\to\infty}\frac{1}{\varepsilon}\E\Big[\sup_{|\eta'-\eta|<\delta}|K_{n}(\eta)-K_{n}(\eta')|\Big]\\
 &\quad =\lim_{\delta\to0}\limsup_{n\to\infty}\frac{1}{\varepsilon}\E\Big[\sup_{|\eta'-\eta|<\delta}\Big|\frac{1}{m}\sum_{j=1}^{m}\Big(2Z_{j}(f_{\eta}(y_{j})-f_{\eta'}(y_{j}))+f_{\eta'}^{2}(y_{j})-f_{\eta}^{2}(y_{j})\Big)\Big|\Big].
\end{align*}
The last line converges to zero by dominated convergence and uniform continuity of $f_{\eta}$ in $\eta$.

\emph{Step 3:} To prove the central limit theorem, we follow the classical theory elaborated in \cite[Chapter 3.3.4]{dacunha}. We calculate the first
and second derivative of $K_{n}(\eta)$. The gradient and the Hessian
matrix are given by
\begin{align*}
\dot{K}_{n}(\eta) & =-\frac{2}{m}\sum_{j=1}^{m}(Z_{j}-f_{\eta}(y_{j}))\frac{e^{-\varkappa y_{j}}}{\sqrt{\pi}}\begin{pmatrix}1\\
-IV_0\,y_{j}
\end{pmatrix},\\
\ddot{K}_{n}(\eta) & =\frac{2}{m}\sum_{j=1}^{m}\big(Z_{j}-f_{\eta}(y_{j})\big)\frac{e^{-\varkappa y_{j}}}{\sqrt{\pi}}\begin{pmatrix}0 & y_{j}\\
y_{j} & -IV_0\,y_{j}^{2}
\end{pmatrix}+\frac{2}{m}\sum_{j=1}^{m}\frac{e^{-2\varkappa y_{j}}}{\pi}\begin{pmatrix}1 & -IV_0\,y_{j}\\
-IV_0\,y_{j} & IV_0^2 y_{j}^{2}
\end{pmatrix},
\end{align*}
respectively. The mean value theorem yields on an event with probability converging to one that
\[
  0=\dot K_n(\hat\eta)=\dot K_n(\eta)+\ddot K_n(\eta^*)(\hat\eta-\eta)
\]
for some $\eta^*$ between $\eta$ and $\hat\eta$ and thus
$
  \hat\eta-\eta=- \ddot K_n(\eta^*)^{-1}\dot K_n(\eta)
$, on the event where $\hat\eta$ is close to $\eta$ and where $\ddot K_n(\eta^*)$ is invertible.

To show a central limit theorem for $\dot K_n(\eta)$, we set
\[
  \chi_{n,i}:=-\frac{2}{\sqrt m}\sum_{j=1}^m\big((\Delta_i \widetilde X)^2(y_j)-\sqrt{\Delta_n} f_\eta(y_j)\big)\frac{e^{-\varkappa y_j}}{\sqrt \pi}\begin{pmatrix} 1\\ -IV_0\,y_{j}\end{pmatrix},
\]
such that $\sqrt{nm}\dot K_n(\eta)=\sum_{i=1}^n\chi_{n,i}$. Using Proposition~\ref{zjprop}, the variance-covariance matrix of $\sum_{i=1}^n\chi_{n,i}$ can be calculated as follows:
\begin{align*}
  &\Cov\Big(\sum_{i=1}^n\chi_{n,i}\Big)\hspace*{-.025cm}=\hspace*{-.025cm}\frac{4}{m}\hspace*{-.025cm}\sum_{j,k=1}^m\frac{e^{-\varkappa(y_j+y_k)}}{\pi}\cov\Big(\sum_{i=1}^n(\Delta_iX)^2(y_j),\sum_{i=1}^n(\Delta_iX)^2(y_k)\Big)\hspace*{-.025cm}\begin{pmatrix} 1 & -IV_0 \,y_k\\ -IV_0\,y_{j} & IV_0^2 y_jy_k\end{pmatrix}\\
  &\,=\frac{4n^2\Delta_n}m\sum_{j,k=1}^m\frac{e^{-2\varkappa(y_j+y_k)}}{\pi}\cov\big(V_n(y_j),V_n(y_k)\big)\begin{pmatrix} 1 & -IV_0 \,y_k\\ -IV_0\,y_{j} & IV_0^2 y_jy_k\end{pmatrix}\\
  &\,=\frac{4}m\sum_{j,k=1}^m\frac{e^{-2\varkappa(y_j+y_k)}}{\pi}\Big(\1_{\{j=k\}}\frac{\Gamma}{\theta_2}\int_0^1\sigma_s^4\,\d s+\mathcal O\Big(\Delta_n^{1/2}\frac{\1_{\{j\neq k\}}}{|y_j-y_k|\wedge\delta}\Big)\Big)\hspace*{-4pt}\begin{pmatrix} 1 & -IV_0\,y_k\\ -IV_0\,y_{j} & IV_0^2y_jy_k\end{pmatrix}\\
  &\,=\frac{4\,\Gamma\int_0^1\sigma_s^4\,\d s}{\theta_2\pi}\begin{pmatrix} \frac{1}{m}\sum_je^{-4\varkappa y_j} & -IV_0\frac{1}{m}\sum_jy_je^{-4\varkappa y_j}\\ -IV_0\frac{1}{m}\sum_jy_je^{-4\varkappa y_j} & IV_0^2\frac{1}{m}\sum_jy_j^2e^{-4\varkappa y_j}\end{pmatrix}\big(1+\mathcal O(\Delta_n^{1/2}(\log m)m)\big).
\end{align*}
If $m\to\infty$, then the matrix in the last display converges on Assumption~\ref{Obs}, when $y_1=\delta$ and $y_m=1-\delta$, to $U(\eta)$ from \eqref{eq:U}. In particular, Jensen's inequality shows that the covariance matrix is positive definite for sufficiently large $m$.
We will apply again Utev's central limit theorem together with the Cramér-Wold device to deduce the two-dimensional central limit theorem. Analogously to $\tilde\zeta_{n,i}$ from \eqref{zetatilde} in the proof of Theorem~\ref{cltm2}, the triangular array $(\chi_{n,i})$ and any linear combination of its components satisfy the mixing-type Condition \eqref{eq:genMixing} based on a direct modification of Corollary \ref{cor:mixing}. By the above considerations for the variance, $(\chi_{n,i})$ and any linear combination of its components satisfy Conditions~\eqref{eq:condVariances} and \eqref{sumVariances}. The Lyapunov condition and hence \eqref{eq:lindeberg} follows from $$\E[|\chi_{n,i}|^4]=\mathcal O\Big(m^2\max_j\E[(\Delta_i\tilde X(y_j))^8]\Big)=\mathcal O(m^2\Delta_n^2)=\KLEINO(1)\,,$$ similar to \eqref{eq:4thMoments}.
Therefore, we have under $\P_\eta$ for $n\to\infty$ and $m=m_n\to\infty$, when $\sqrt{m_n}\Delta_n^{\alpha'-1/2}\to 0$ for some $\alpha'<\alpha$, that 
\[
  \sqrt{mn}K_n(\eta)\overset{d}{\rightarrow}\mathcal N\Big(0,\frac{4\,\Gamma\int_0^1 \sigma_s^4\,\d s}{\theta_2\pi} U(\eta)\Big).
\]
As the final step, we will verify $\P_\eta$-convergence in probability of $\ddot K_n(\eta_n)$ to a deterministic, invertible matrix for any sequence $\eta_n\overset{\P_\eta}{\to}\eta$. Since for any such sequence, $Z_j-f_{\eta_n}(y_j)=f_\eta(y_j)-f_{\eta_n}(y_j)+\delta_{n,j}\overset{\P_\eta}{\to}0$ holds under $\P_\eta$ by continuity of $f_\eta$ in the parameter and owing to $\delta_{n,j}\overset{\P_\eta}{\to}0$, we indeed have
\begin{align*}
  \ddot K_n(\eta_n)&=\frac{2}{m}\sum_{j=1}^{m}\frac{e^{-2\varkappa y_{j}}}{\pi}\begin{pmatrix}1 & -IV_0\,y_{j}\\
 -IV_0\,y_{j} & IV_0^{2}y_{j}^{2}\end{pmatrix}+\KLEINO_{\P_{\eta}}(1)\overset{\P_\eta}{\to}\frac{2}{\pi}V(\eta)\,,
\end{align*}
with $V(\eta)$ from \eqref{eq:V} using that $y_1=\delta$ and $y_m=1-\delta$. Jensen's inequality verifies that $V(\eta)$ is strictly positive definite. We obtain the limit distribution in \eqref{cltlq}.
\hfill\qed

\subsection{Proof of Corollary~\ref{gclt}}
For the statistics analogous to \eqref{zetaM}, the bias is asymptotically negligible by Proposition \ref{apprprop}. For the centered versions, we obtain Conditions \eqref{eq:condVariances} and \eqref{sumVariances} with the asymptotic variance in \eqref{gclteq} from Proposition \ref{zjprop}. Since $(\sigma_t^2)_{0\le t\le 1}$ is uniformly bounded, the proofs of the Lyapunov condition and the mixing-type conditions from Proposition \ref{prop:mixing} and Corollary \ref{cor:mixing} readily extend to the time-varying setting. With \citet[Thm. B]{peligradUtev1997}, we thus conclude \eqref{gcltm1eq} and \eqref{gclteq}. A simple generalization shows consistency of the integrated quarticity estimator and thus \eqref{gcltf}.\hfill\qed
\addcontentsline{toc}{section}{Acknowledgements}
\section*{Acknowledgements}
The authors are grateful to two anonymous referees whose valuable comments helped improving the paper. We also thank Jan Kallsen and Mathias Vetter for helpful comments. Mathias Trabs gratefully acknowledges financial support by the DFG research fellowship TR 1349/1-1. 

\addcontentsline{toc}{section}{References}
\bibliographystyle{apalike}
\bibliography{library}

\begin{thebibliography}{}

\bibitem[Bagchi and Kumar, 2001]{Bagchi2001}
Bagchi, A. and Kumar, K.~S. (2001).
\newblock An infinite factor model for the interest rate derivatives.
\newblock In Kohlmann, M. and Tang, S., editors, {\em Mathematical Finance:
  Workshop of the Mathematical Finance Research Project, Konstanz, Germany,
  October 5--7, 2000}, pages 59--68. Birkh{\"a}user Basel, Basel.

\bibitem[Barndorff-Nielsen et~al., 2009]{power}
Barndorff-Nielsen, O.~E., Corcuera, J.~M., and Podolskij, M. (2009).
\newblock Power variation for gaussian processes with stationary increments.
\newblock {\em Stochastic Processes and their Applications}, 119(6):1845 --
  1865.

\bibitem[Bishwal, 2002]{bishwal2002}
Bishwal, J. P.~N. (2002).
\newblock The {B}ernstein-von {M}ises theorem and spectral asymptotics of
  {B}ayes estimators for parabolic {SPDE}s.
\newblock {\em Journal of the Australian Mathematical Society}, 72(2):287--298.

\bibitem[Bouchaud et~al., 1999]{bouchaud}
Bouchaud, J.-P., Sagna, N., Cont, R., El-Karoui, N., and Potters, M. (1999).
\newblock Phenomenology of the interest rate curve.
\newblock {\em Applied Mathematical Finance}, 6(3):209--232.

\bibitem[Chong, 2018]{chong}
Chong, C. (2018).
\newblock High-frequency analysis of parabolic stochastic pdes.
\newblock arXiv:1806.06959.

\bibitem[Cialenco, 2018]{Cialenco2018}
Cialenco, I. (2018).
\newblock Statistical inference for spdes: an overview.
\newblock {\em Statistical Inference for Stochastic Processes}, 21(2):309--329.

\bibitem[Cialenco and Glatt-Holtz, 2011]{CialencoGlattHoltz2011}
Cialenco, I. and Glatt-Holtz, N. (2011).
\newblock Parameter estimation for the stochastically perturbed
  {N}avier-{S}tokes equations.
\newblock {\em Stochastic Processes and their Applications}, 121(4):701--724.

\bibitem[Cialenco et~al., 2018]{cialenco2016trajectory}
Cialenco, I., Gong, R., and Huang, Y. (2018).
\newblock Trajectory fitting estimators for spdes driven by additive noise.
\newblock {\em Statistical Inference for Stochastic Processes}, 21(1):1--19.

\bibitem[Cialenco and Huang, 2019]{cialenco}
Cialenco, I. and Huang, Y. (2019).
\newblock A note on parameter estimation for discretely sampled spdes.
\newblock {\em Stochastics and Dynamics, forthcoming, doi:
  10.1142/S0219493720500161}.

\bibitem[Cont, 2005]{cont2004}
Cont, R. (2005).
\newblock Modeling term structure dynamics: an infinite dimensional approach.
\newblock {\em International Journal of Theoretical and Applied Finance},
  8(3):357--380.

\bibitem[Da~Prato and Zabczyk, 1992]{daPratoZabczyk1992}
Da~Prato, G. and Zabczyk, J. (1992).
\newblock {\em Stochastic equations in infinite dimensions}, volume~44 of {\em
  Encyclopedia of Mathematics and its Applications}.
\newblock Cambridge University Press, Cambridge.

\bibitem[Dacunha-Castelle and Duflo, 1986]{dacunha}
Dacunha-Castelle, D. and Duflo, M. (1986).
\newblock {\em Probability and statistics. {V}ol. {II}}.
\newblock Springer-Verlag, New York.
\newblock Translated from the French by David McHale.

\bibitem[Fox and Weisberg, 2000]{nls}
Fox, J. and Weisberg, S. (2000).
\newblock Nonlinear regression and nonlinear least squares in r.
\newblock {\em An Appendix to An R Companion to Applied Regression, second
  edition},
  \url{https://socserv.socsci.mcmaster.ca/jfox/Books/Companion/appendix/Appendix-Nonlinear-Regression.pdf}.

\bibitem[Hairer, 2013]{hairer2013}
Hairer, M. (2013).
\newblock Solving the {KPZ} equation.
\newblock {\em Annals of Mathematics}, 178(2):559--664.

\bibitem[{Heath} et~al., 1992]{HJM}
{Heath}, D., {Jarrow}, R., and {Morton}, A. (1992).
\newblock {Bond pricing and the term structure of interest rates: a new
  methodology for contingent claims valuation.}
\newblock {\em {Econometrica}}, 60(1):77--105.

\bibitem[H{\"u}bner et~al., 1993]{huebnerEtAl1993}
H{\"u}bner, M., Khasminskii, R., and Rozovskii, B.~L. (1993).
\newblock Two examples of parameter estimation for stochastic partial
  differential equations.
\newblock In {\em Stochastic processes}, pages 149--160. Springer, New York.

\bibitem[{H{\"u}bner} and {Lototsky}, 2000]{lototsky2000}
{H{\"u}bner}, M. and {Lototsky}, S. (2000).
\newblock {Asymptotic analysis of a kernel estimator for parabolic SPDE's with
  time-dependent coefficients.}
\newblock {\em {The Annals of Applied Probability}}, 10(4):1246--1258.

\bibitem[H{\"u}bner and Rozovski{\u\i}, 1995]{huebnerroz95}
H{\"u}bner, M. and Rozovski{\u\i}, B.~L. (1995).
\newblock On asymptotic properties of maximum likelihood estimators for
  parabolic stochastic {PDE}'s.
\newblock {\em Probability Theory and Related Fields}, 103(2):143--163.

\bibitem[Jacod, 1997]{jacodkey}
Jacod, J. (1997).
\newblock On continuous conditional gaussian martingales and stable convergence
  in law.
\newblock {\em S\'{e}minaire de Probabiliti\'{e}s}, pages 232--246.

\bibitem[Jacod and Protter, 2012]{JP}
Jacod, J. and Protter, P. (2012).
\newblock {\em Discretization of processes}.
\newblock Springer.

\bibitem[{Kolmogorov} and {Rozanov}, 1961]{rozanov}
{Kolmogorov}, A. and {Rozanov}, Y. (1961).
\newblock {On strong mixing conditions for stationary Gaussian processes.}
\newblock {\em {Theory of Probability and its Applications}}, 5:204--208.

\bibitem[Lototsky, 2009]{lototsky2009}
Lototsky, S.~V. (2009).
\newblock Statistical inference for stochastic parabolic equations: a spectral
  approach.
\newblock {\em Publicacions Matem\`atiques}, 53(1):3--45.

\bibitem[Markussen, 2003]{Markussen2003}
Markussen, B. (2003).
\newblock Likelihood inference for a discretely observed stochastic partial
  differential equation.
\newblock {\em Bernoulli}, 9(5):745--762.

\bibitem[Mohapl, 1997]{mohapl1997}
Mohapl, J. (1997).
\newblock On estimation in the planar {O}rnstein-{U}hlenbeck process.
\newblock {\em Communications in Statistics. Stochastic Models},
  13(3):435--455.

\bibitem[Musiela, 1993]{musiela:1993}
Musiela, M. (1993).
\newblock Stochastic pdes and term structure models.
\newblock {\em Journees Internationales de Finance}, IGR-AFFI, La Baule.

\bibitem[Mykland and Zhang, 2009]{inference}
Mykland, P. and Zhang, L. (2009).
\newblock Inference for continuous semimartingales observed at high frequency.
\newblock {\em Econometrica}, 77(5):1403--1445.

\bibitem[Peligrad and Utev, 1997]{peligradUtev1997}
Peligrad, M. and Utev, S. (1997).
\newblock Central limit theorem for linear processes.
\newblock {\em The Annals of Probability}, 25(1):443--456.

\bibitem[Prakasa~Rao, 2002]{prakasarao2002}
Prakasa~Rao, B. L.~S. (2002).
\newblock Nonparametric inference for a class of stochastic partial
  differential equations based on discrete observations.
\newblock {\em Sankhy\=a Ser. A}, 64(1):1--15.

\bibitem[Santa~Clara and Sornette, 2001]{santaclaraSornette2000}
Santa~Clara, P. and Sornette, D. (2001).
\newblock The dynamics of the forward interest rate curve with stochastic
  string shocks.
\newblock {\em Review of Financial Studies}, 14(1):149 -- 185.

\bibitem[{Schmidt}, 2006]{schmidt}
{Schmidt}, T. (2006).
\newblock {An infinite factor model for credit risk.}
\newblock {\em {International Journal of Theoretical and Applied Finance}},
  9(1):43--68.

\bibitem[Swanson, 2007]{swanson2007}
Swanson, J. (2007).
\newblock Variations of the solution to a stochastic heat equation.
\newblock {\em The Annals of Probability}, 35(6):2122--2159.

\bibitem[Tuckwell, 2013]{tuckwell2013}
Tuckwell, H.~C. (2013).
\newblock Stochastic partial differential equations in neurobiology: Linear and
  nonlinear models for spiking neurons.
\newblock In {\em Stochastic Biomathematical Models}, pages 149--173. Springer.

\bibitem[Utev, 1990]{utev}
Utev, S.~A. (1990).
\newblock Central limit theorem for dependent random variables.
\newblock In {\em Probability theory and mathematical statistics, {V}ol.\ {II}
  ({V}ilnius, 1989)}, pages 519--528. ``Mokslas'', Vilnius.

\bibitem[Walsh, 1986]{walsh1986}
Walsh, J.~B. (1986).
\newblock {\em {An introduction to stochastic partial differential equations}}.
\newblock Springer.

\bibitem[Winkelmann et~al., 2016]{ecb}
Winkelmann, L., Bibinger, M., and Linzert, T. (2016).
\newblock Ecb monetary policy surprises: Identification through cojumps in
  interest rates.
\newblock {\em Journal of Applied Econometrics}, 31(4):613--629.

\end{thebibliography}

\end{document}